\documentclass[12pt]{amsart}

\usepackage{fullpage}
\usepackage{amssymb}
\usepackage{amsmath}
\usepackage{amsthm}
\usepackage{mathtools}
\usepackage[shortlabels]{enumitem}
\usepackage{colonequals}
\usepackage{xspace}
\usepackage{booktabs}
\usepackage{array}
\usepackage{colortbl}
\usepackage{caption}
\usepackage{tikz}
\usetikzlibrary{cd}
\usepackage[alphabetic,backrefs,lite,msc-links]{amsrefs}
\usepackage{color}
\usepackage{setspace}
\usepackage{hyperref}
\definecolor{mycitecolor}{RGB}{169,169,169}
\definecolor{mylinkcolor}{rgb}{1.0, 0.0, 0.5}
\definecolor{myurlcolor}{rgb}{1.0, 0.0, 0.5}
\hypersetup{colorlinks=true,
  urlcolor=myurlcolor,
  citecolor=mycitecolor,
  linkcolor=mylinkcolor,
  linktoc=page,
  breaklinks=true}
\usepackage{url}
\usepackage{cleveref}
\usepackage{indentfirst}

\newcommand{\A}{\mathbb{A}}
\newcommand{\F}{{\mathbb F}}
\renewcommand{\P}{\mathbb{P}}
\newcommand{\PP}{{\mathbb P}}
\newcommand{\Q}{\mathbb{Q}}
\newcommand{\Z}{\mathbb{Z}}
\newcommand{\Qbar}{{\overline{\Q}}}
\newcommand{\Pone}{\P^{1}}
\newcommand{\calF}{{\mathcal F}}
\newcommand{\calS}{{\mathcal S}}
\newcommand{\calU}{{\mathcal U}}
\newcommand{\mcU}{\calU}
\newcommand{\calV}{{\mathcal V}}
\newcommand{\calZ}{{\mathcal Z}}
\newcommand{\OO}{{\mathcal O}}

\newcommand{\OKS}{\OO_{K,\calS}}
\newcommand{\mfp}{{\mathfrak p}}

\newcommand{\x}{\mathsf{x}}
\newcommand{\y}{\mathsf{y}}
\newcommand{\w}{\mathsf{w}}
\renewcommand{\t}{\mathsf{t}}
\newcommand{\sfa}{\mathsf{a}}
\newcommand{\sfu}{\mathsf{u}}
\newcommand{\sfv}{\mathsf{v}}

\DeclareMathOperator{\ns}{ns}
\DeclareMathOperator{\spc}{sp}
\DeclareMathOperator{\Aut}{Aut}
\DeclareMathOperator{\Autsch}{\mathbf{Aut}} 
\DeclareMathOperator{\Br}{Br}
\DeclareMathOperator{\disc}{disc}
\DeclareMathOperator{\Gal}{Gal}
\DeclareMathOperator{\Norm}{Norm}
\DeclareMathOperator{\Proj}{Proj}
\DeclareMathOperator{\Res}{Res}
\DeclareMathOperator{\Sym}{Sym}
\DeclareMathOperator{\Tr}{Tr}
\newcommand{\Frob}{\operatorname{Frob}}
\newcommand{\GL}{\operatorname{GL}}
\newcommand{\PGL}{\operatorname{PGL}}
\newcommand{\SL}{\operatorname{SL}}

\newcommand{\cdef}[1]{{\color{blue} \textsf{#1}}} 
\newcommand{\Magma}{\texttt{Magma}\xspace}
\newcommand{\mclabel}[1]{\href{https://beta.lmfdb.org/ModularCurve/Q/#1}{\texttt{#1}}}

\definecolor{tabrule}{gray}{0.80}
\newcommand{\rowsep}{\arrayrulecolor{tabrule}\midrule[\lightrulewidth]%
  \arrayrulecolor{black}}
\newcommand{\headrule}{\midrule[\heavyrulewidth]}
\newcommand{\tablesetup}{\renewcommand{\arraystretch}{1}%
  \setlength{\extrarowheight}{0pt}%
  \setbox\strutbox\hbox{\vrule
    height \dimexpr 0.9\baselineskip + \fontdimen22\textfont2\relax
    depth \dimexpr 0.9\baselineskip - \fontdimen22\textfont2\relax
    width 0pt}%
  \setlength{\heavyrulewidth}{1.1pt}%
  \setlength{\lightrulewidth}{0.4pt}%
  \setlength{\aboverulesep}{0pt}%
  \setlength{\belowrulesep}{0pt}}
\newcolumntype{:}{!{\color{tabrule}\vrule width \lightrulewidth}}

\newdimen\tabrowpad
\newcommand{\padrows}[1]{\noalign{\global\tabrowpad=#1\relax}}
\newcommand{\heavyvrule}{%
  \vrule width \heavyrulewidth
  \hskip -\heavyrulewidth
  \smash{\vrule width \heavyrulewidth
    height \dimexpr \ht\strutbox + \tabrowpad + \heavyrulewidth \relax
    depth 0pt}%
  \vrule width 0pt height \dimexpr \ht\strutbox + \tabrowpad \relax
    depth \dimexpr \dp\strutbox + \tabrowpad \relax}
\newcolumntype{I}{!{\heavyvrule}}
\newcommand{\tabmatrix}[1]{\text{\normalsize
    $\begin{pmatrix*}[r] #1\end{pmatrix*}$}}
\definecolor{tabhead}{gray}{0.92}
\newcommand{\headrow}{\rowcolor{tabhead}}

\crefname{lemma}{Lemma}{Lemmas}
\crefname{proposition}{Proposition}{Propositions}
\crefname{corollary}{Corollary}{Corollaries}
\crefname{remark}{Remark}{Remarks}
\crefname{example}{Example}{Examples}
\crefname{definition}{Definition}{Definitions}
\crefname{theorem}{Theorem}{Theorems}
\crefname{section}{Section}{Sections}
\crefname{table}{Table}{Tables}
\crefname{figure}{Figure}{Figures}
\crefformat{equation}{(#2#1#3)}
\Crefformat{equation}{Equation~(#2#1#3)}

\usepackage{etoolbox}
\AtBeginEnvironment{lemma}{\crefalias{theorem}{lemma}}
\AtBeginEnvironment{proposition}{\crefalias{theorem}{proposition}}
\AtBeginEnvironment{corollary}{\crefalias{theorem}{corollary}}
\AtBeginEnvironment{remark}{\crefalias{theorem}{remark}}
\AtBeginEnvironment{example}{\crefalias{theorem}{example}}
\AtBeginEnvironment{definition}{\crefalias{theorem}{definition}}
\AtBeginEnvironment{maincor}{\crefalias{mainthm}{maincor}}

\numberwithin{equation}{section}
\newtheorem{theorem}{Theorem}[section]
\newtheorem{lemma}[theorem]{Lemma}
\newtheorem{corollary}[theorem]{Corollary}
\newtheorem{proposition}[theorem]{Proposition}
\newtheorem{mainthm}{Theorem}
\newtheorem{maincor}[mainthm]{Corollary}
\theoremstyle{definition}
\newtheorem{definition}[theorem]{Definition}
\newtheorem{example}[theorem]{Example}
\newtheorem{remark}[theorem]{Remark}
\crefname{mainthm}{Theorem}{Theorems}
\crefname{maincor}{Corollary}{Corollaries}

\date{\today}
\title{$7$-adic Galois representations of elliptic curves over the rationals
  via Kummer descent}

\author{Santiago Arango-Pi{\~n}eros}
\address{Department of Mathematics, University of Massachusetts Amherst, Amherst, MA 01003, USA}
\email{santiago.arango.pineros@gmail.com}
\urladdr{\url{https://sarangop1728.github.io/}}

\author{David Zureick-Brown}
\address{Department of Mathematics, Amherst College, 31 Quadrangle Dr., Amherst, MA 01002, USA}
\email{dzureickbrown@amherst.edu}
\urladdr{\url{https://dmzb.github.io/}}

\begin{document}

\begin{abstract}
  We show that the two modular curves of level $49$ and genus $9$ left open by
  the work of Rouse, Sutherland and Zureick-Brown, and of Furio and Lombardo,
  have no non-CM rational points. Together with the theorem of Furio and
  Lombardo on $X_{\ns}^{+}(49)$, this completes the classification of the
  $7$-adic images of Galois of non-CM elliptic curves over $\Q$. The proof is a
  Kummer descent on the superelliptic equations $F(\x,\y) = k\,\w^{7}$ of Furio
  and Lombardo, in the cases $7 \mid k$ that they left open. The covering
  curves are twists of the Fermat septic, they map to twists of the Klein
  quartic, and a $7$-adic computation shows that the twist attached to a
  solution is trivial. The rational points of the Klein quartic were
  determined by Hurwitz, who reduced the question to Fermat's Last Theorem for
  exponent $7$, proved by Lam\'e. Thus, the last open case of the
  $7$-adic part of Mazur's Program B rests on Fermat's Last Theorem
  for exponent $7$.
\end{abstract}

\maketitle

\setcounter{tocdepth}{1}
\tableofcontents

\section{Introduction}
\label{sec:introduction}

Let $E$ be an elliptic curve over $\Q$ without complex multiplication and let
$\ell$ be a prime. By Serre's open image theorem \cite{Serre-72}, the
$\ell$-adic Galois representation $\rho_{E,\ell^{\infty}}$ of $\Gal_{\Q}$ has open image in
$\GL_2(\Z_\ell)$, and Mazur's \emph{Program B} \cite{Mazur-77}*{p.~109} asks for
the classification of the subgroups that occur. Rouse, Sutherland and the
second author \cite{RSZB-22}*{Theorem 1.6} carried out this classification up
to the determination of the rational points of a short list of modular curves
(we give their LMFDB labels \cite{LMFDB}): the curves $X_{\ns}^{+}(\ell)$
attached to the normalizer of a non-split Cartan subgroup, for primes
$\ell > 17$; the four curves $X_{\ns}^{+}(27) =$ \mclabel{27.243.12.a.1},
$X_{\ns}^{+}(25) =$ \mclabel{25.250.14.a.1}, $X_{\ns}^{+}(49) =$
\mclabel{49.1029.69.a.1} and $X_{\ns}^{+}(121) =$ \mclabel{121.6655.511.a.1};
and two curves of level $49$ and genus $9$, which, following Furio and Lombardo
\cite{Furio--Lombardo26}*{Definition 2.7, Remark 2.8}, we denote by
$X_{\ns}^{\sharp}(49) =$ \mclabel{49.147.9.a.1} and
$X_{\spc}^{\sharp}(49) =$ \mclabel{49.196.9.a.1}. The curve $X_{\ns}^{+}(27)$
has since been settled by Balakrishnan, Betts, Hast, Jha and M\"uller
\cite{BBHJM-25}, and $X_{\ns}^{+}(49)$ by Furio and Lombardo, who proved that it
has exactly seven rational points, all of them CM
\cite{Furio--Lombardo26}*{Theorem 1.4}. Our work here builds on that of Furio
and Lombardo, and our main result is the computation of the rational points on
the curves $X_{\ns}^{\sharp}(49)$ and $X_{\spc}^{\sharp}(49)$, thereby
completing the classification of $7$-adic images of Galois for non-CM elliptic
curves over $\Q$.

Furio and Lombardo reduced the computation of the rational points on
$X_{\ns}^{\sharp}(49)$ and $X_{\spc}^{\sharp}(49)$ to the determination of the rational
points of a single plane quartic $X_{E_3}(7)$, a twist of the Klein quartic
whose Jacobian has Mordell--Weil rank $3$, and they conjectured that this
quartic has exactly four rational points \cite{Furio--Lombardo26}*{Conjecture
  1.6}. We do not address this conjecture: our proof is unconditional and takes
a different route.

\begin{remark}
  \label{remark:eray}
  On September 18, 2026, Eray Karabiyik communicated to us that he was able
  to prove
  \cite{Furio--Lombardo26}*{Conjecture
    1.6} directly \cite{Karabiyik-26}. Karabiyik's proof, in essence, does
  quadratic Chabauty over number fields by using an elliptic quotient over a
  number field of degree 21.
\end{remark}

\begin{remark}
  On September 28, 2026, Nguyen Xuan Tho emailed us a preprint
  \cite{Xuan2026} with an AI-generated proof of
  \cite{Furio--Lombardo26}*{Conjecture
    1.6}.
\end{remark}

\subsection*{The Diophantine statement}
The curves $X_{\ns}^{\sharp}(49)$ and $X_{\spc}^{\sharp}(49)$ admit natural maps
of degree $7$ to the genus zero curves $X_{\ns}^{+}(7) =$ \mclabel{7.21.0.a.1} and
$X_{\spc}^{+}(7) =$ \mclabel{7.28.0.a.1} attached to the normalizers of a
non-split and of a split Cartan subgroup of $\GL_2(\F_7)$ (the precise
definitions are recalled in \cref{sec:groups}). The $j$-maps of
$X_{\ns}^{+}(7)$ and $X_{\spc}^{+}(7)$, in Zywina's coordinates (the functions
$J_6$ and $J_2$ of \cite{Zywina-15}*{Section 1.4}; see
\cite{Zywina-15}*{Theorem 1.5(ii)} and \cite{Furio--Lombardo26}*{Theorem 2.1}),
have denominators $f_{\ns}(\t)^7$ and $f_{\spc}(\t)^7$, where
\begin{equation}
  \label{eq:the-cubics}
  f_{\ns}(\t) \colonequals \t^3 - 7\t^2 + 7\t + 7,
  \qquad
  f_{\spc}(\t) \colonequals \t^3 - 4\t^2 + 3\t + 1 .
\end{equation}
Both cubics define the field $K \colonequals \Q(\zeta_7)^{+}$. Let $F(\x,\y)$
denote the homogenization of $f(\t)$:
\begin{equation}
  \label{eq:the-cubic-forms}
  F_{\ns}(\x,\y) \colonequals \x^3 - 7\x^2\y + 7\x\y^2 + 7\y^3,
  \qquad
  F_{\spc}(\x,\y) \colonequals \x^3 - 4\x^2\y + 3\x\y^2 + \y^3 .
\end{equation}
Furio and Lombardo showed
\cite{Furio--Lombardo26}*{Proposition 3.1} that a non-CM rational point on
$X_{\ns}^{\sharp}(49)$ or $X_{\spc}^{\sharp}(49)$ yields coprime integers $x,y$
and an integer $w$ such that $F(x,y) = k w^7$, with $k \in \{7,56\}$ in the
non-split case and $k \in \{1,7\}$ in the split case; we rederive this in
\cref{sec:equations}. Our main result is the solution of these equations when
$7 \mid k$.

\begin{mainthm}
  \label{thm:superelliptic}
  Let $(F,k)$ be one of the pairs $(F_{\ns},7)$, $(F_{\ns},56)$,
  $(F_{\spc},7)$. The integer solutions $(x,y,w)$ of
  \begin{equation}
    \label{eq:superelliptic-intro}
    F(\x,\y) = k\,\w^{7}, \qquad \gcd(x,y) = 1,
  \end{equation}
  all satisfy $w = \pm 1$. Moreover, they are exactly the ones listed in
  \cref{tab:solutions}.
\end{mainthm}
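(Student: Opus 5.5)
Our plan is a Kummer descent over $K = \Q(\zeta_7)^{+}$, following the abstract. Factor $F(\x,\y) = \prod_{i=1}^{3} (\x - \theta_i \y)$ over $K$, where the $\theta_i$ are the roots of $f$ (conjugate under $\Gal(K/\Q) \cong \Z/3$). In $\OO_K$ the prime $7$ is totally ramified, $7 = \pi^3 \cdot(\text{unit})$ with $\pi = 2 - \zeta_7 - \zeta_7^{-1}$. For coprime $x,y$, the ideals $(x - \theta_i y)$ are pairwise coprime away from the primes dividing the discriminant of $F$. That discriminant is a power of $7$, times small factors from the index of $\Z[\theta]$ in $\OO_K$ and from the leading coefficients. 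A $7$-adic analysis of each case then pins down the exact valuation of each factor at $\pi$. Because $K$ has class number $1$, we can write
\[ x - \theta y = \varepsilon\, \gamma\, \alpha^7, \qquad \alpha \in \OO_K, \]
where $\gamma$ ranges over an explicit finite set built from powers of $\pi$ (and the prime above $2$ when $k = 56$), and $\varepsilon$ ranges over $\OO_K^\times/\OO_K^{\times 7}$. This unit group is generated by $-1$ and two cyclotomic units, so there are $7^2$ unit classes. The norm condition $N(\varepsilon\gamma\alpha^7) = k w^7$ cuts this list down to a finite set of pairs $(\varepsilon,\gamma)$.

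Next, for each surviving pair, we use the fact that the linear forms $\x - \theta_i\y$ for $i=1,2,3$ satisfy one linear relation over $K$. This relation gives a ternary equation
\[ a_1 \alpha_1^7 + a_2 \alpha_2^7 + a_3 \alpha_3^7 = 0 \]
with $\alpha_i$ the Galois conjugates of $\alpha$. This defines a twist $C_{\varepsilon,\gamma}$ of the Fermat septic over $K$, and the original solution gives a point on it. The Fermat septic maps to the Klein quartic: the quotient of $u^7+v^7+w^7=0$ by the order-$7$ automorphism $(u,v,w)\mapsto(u,\zeta v,\zeta^3 w)$ is Klein's curve. Descending, the twist $C_{\varepsilon,\gamma}$ maps to a twist of the Klein quartic that is defined over $\Q$, since Galois permutes the three factors and the descent data is $\Gal(K/\Q)$-stable. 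For the correct choice of $\gamma$ this twist is in fact the original genus-$3$ quotient.

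The key step is to prove that the twist class in $H^1(\Gal_\Q, \Aut(\text{Klein}))$ (with $\Aut \cong \PGL_2(\F_7)$) attached to any actual solution is trivial. We would use $7$-adic information: the local conditions at $\pi$ force $\varepsilon\gamma$ to be a $7$-th power times a specific element, up to the Galois-equivariant ambiguities. We would try to show this by computing $\varepsilon\gamma$ modulo $\pi^{N}$ for a small $N$ (around $8$) and comparing with the $7$-th powers in $(\OO_K/\pi^N)^\times$. Away from $7$ the unit ambiguity is already killed by the norm condition. Once the twist is trivial, the solution yields a rational point on the Klein quartic itself. By Hurwitz's reduction to Fermat's Last Theorem for exponent $7$ (Lamé), those points are exactly the four points $[1:0:0]$, $[0:1:0]$, $[0:0:1]$ and one more, and we pull them back. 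Checking these finitely many points recovers the solutions in \cref{tab:solutions}, each with $w = \pm 1$.

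We expect the main obstacle to be this $7$-adic triviality computation. It must handle every unit class that survives local conditions at primes other than $7$, and for $k=56$ it must also handle the prime $2$. If it does not close cleanly for some class, we would fall back on showing that the corresponding twist has no $\Q_7$-points, or on a Chabauty computation for that single twist.
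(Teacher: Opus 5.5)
Your outline (Kummer descent over $K$, twists of the Fermat septic, their Klein-quartic quotients, a $7$-adic triviality argument, Hurwitz) matches the paper's. However, the decisive step is missing, and the way you propose to carry it out aims at a false statement.

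The class of $\varepsilon\gamma$ modulo $\Q^\times K^{\times 7}$ is not the same for all solutions, so no computation can pin it to ``a $7$-th power times a specific element''. For $(F_{\spc},7)$ the three genuine solutions $(1,-1)$, $(5,2)$, $(4,3)$ come from three distinct classes of $\delta$: the corresponding $\eta=\delta/f'(\theta)$ are $\eta,\sigma(\eta),\sigma^{2}(\eta)$, cf.\ \cref{ex:five-two}. Also, the norm condition does nothing to the unit part, since units have norm $\pm1$, so all $49$ unit classes reach the $7$-adic step. There, with $x+y=7s$ and $x-\theta y=\varpi(-y+s\theta^{-2}\varpi^{2})$, $\varpi=1+\theta_{\spc}$, the local information confines the class to a line of $7$ classes, and every class on that line is $7$-adically realized. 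Comparing with seventh powers modulo $\pi^{8}$ therefore leaves you with $7$ Fermat twists, not one, and neither of your fallbacks (no $\Q_7$-points, or Chabauty) addresses this.

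Three ideas are missing.

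(i) With the orientation matched to $\sigma$, the quotient is $[\omega]\mapsto[\omega^{3}\sigma^{-1}(\omega)]$, with image $\{\Tr(\eta\,v^{3}\sigma(v))=0\}$. This quartic depends only on $\eta$ modulo $\Q^\times(K^\times)^{3+\sigma}$, because $\lambda^{7}=\Norm(\lambda)\,\nu^{3}\sigma(\nu)$ for $\nu=\lambda^{2}/\sigma(\lambda)$ (\cref{lem:twisting}). On $\Sigma\cong\F_7^{2}=\Sigma_2\oplus\Sigma_4$ (the $\sigma$-eigenspaces), the image of $3+\sigma$ is $\Sigma_2$, so the $7$ surviving Fermat twists all map to the same quartic.

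(ii) To see that the survivors are exactly $\Sigma_2$, you need an eigen-homomorphism rather than a comparison with seventh powers. Define $\kappa(g)=b+4a$ for $g\equiv\varpi^{a}u_0(1+b\varpi)$. It is trivial on $\Q_7^\times$ and on seventh powers and satisfies $\kappa\circ\sigma=4\kappa$, so it kills $\Sigma_2$ but not $\Sigma_4$. The expansion above then gives $\kappa\bigl((x-\theta y)/f'(\theta)\bigr)=4-4=0$, using only the expansion modulo $\varpi^{2}$.

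(iii) The base point of the twisting is $\delta=f'(\theta)$, not $\delta=1$. The curve $Z_0=\{\Tr(v^{3}\sigma(v))=0\}$ is a priori the twist of $\calZ$ by the cyclic-permutation cocycle, and that it is $\Q$-isomorphic to $\calZ$ is Klein's identity (\cref{prop:klein-identity}). This is not automatic: the other $\sigma$-stable quotient $[\omega^{3}\sigma(\omega)]$ gives, for $\eta=1$, a quartic with no $\Q_2$-points. Your sentence ``for the correct choice of $\gamma$ this twist is in fact the original genus-$3$ quotient'' is exactly this unproved point.

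Two smaller errors. Hurwitz gives exactly three rational points on $\calZ$, not four. Also $\Aut(\calZ_{\Qbar})\cong\mathrm{PSL}_2(\F_7)$, of order $168$, not $\PGL_2(\F_7)$.
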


\begin{table}[ht]
  \centering
  \tablesetup
  \begin{tabular}{IcIc:cc:cccI}
    \toprule
    \headrow
    $(F,k)$ & $(F_{\ns},7)$ & \multicolumn{2}{c:}{$(F_{\ns},56)$}
            & \multicolumn{3}{cI}{$(F_{\spc},7)$} \\
    \headrule
    $(x,y,w)$ & $\pm (0,1,1)$ & $\pm (7,1,1)$ & $\pm (7,3,-1)$
              & $\pm (1,-1,1)$ & $\pm (5,2,-1)$ & $\pm (4,3,1)$ \\
    \headrule
    \headrow
    curve & \multicolumn{3}{c:}{$X_{\ns}^{+}(7)(\Q)$}
          & \multicolumn{3}{cI}{$X_{\spc}^{+}(7)(\Q)$} \\
    \headrule
    $t = x/y$ & $0$ & $7$ & $7/3$ & $-1$ & $5/2$ & $4/3$ \\
    \rowsep
    $j(t)$ & $0$ & non-CM & non-CM & $0$ & non-CM & non-CM \\
    \bottomrule
  \end{tabular}
  \caption{The solutions of \eqref{eq:superelliptic-intro} for the three pairs
    $(F,k)$ with $7 \mid k$ (\cref{thm:superelliptic}), the corresponding
    rational points $t = x/y$ of $X_{\ns}^{+}(7)$ and $X_{\spc}^{+}(7)$, and
    whether their $j$-invariants are CM.}
  \label{tab:solutions}
\end{table}

For the remaining pairs, those with $7 \nmid k$, the analogous statement is a
theorem of Furio and Lombardo: every solution of $F_{\ns}(x,y) = kw^7$ with
$k \in \{1,8\}$ and of $F_{\spc}(x,y) = w^7$ in coprime integers has
$w = \pm 1$ \cite{Furio--Lombardo26}*{Corollary 3.3(1), Theorem 3.5(1)}. We use this in the
split case of the next theorem, where $k = 1$ is allowed. The cases $7 \mid k$ are
precisely the ones that lead Furio and Lombardo to the twist of the Klein
quartic of rank $3$.

\begin{mainthm}
  \label{thm:modular-curves}
  Every rational point of $X_{\ns}^{\sharp}(49)$ and of $X_{\spc}^{\sharp}(49)$
  is a cusp or a CM point. More precisely, every non-cuspidal rational point of
  either curve has $j$-invariant $0$. In particular, the four non-CM points of
  \cref{tab:solutions} do not lift to rational points of
  $X_{\ns}^{\sharp}(49)$, respectively of $X_{\spc}^{\sharp}(49)$.
\end{mainthm}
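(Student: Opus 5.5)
The plan is to deduce \cref{thm:modular-curves} from \cref{thm:superelliptic}, together with the Furio--Lombardo results for $7 \nmid k$ and the reduction of \cite{Furio--Lombardo26}*{Proposition 3.1}, which we rederive in \cref{sec:equations}.

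First, take a rational point $P$ of $X_{\ns}^{\sharp}(49)$ or $X_{\spc}^{\sharp}(49)$ that is neither a cusp nor a CM point. Its image under the degree-$7$ map is a rational point $t = x/y$ of $X_{\ns}^{+}(7)$ or $X_{\spc}^{+}(7)$, with $\gcd(x,y)=1$. The reduction then gives an integer $w$ with $F(x,y) = k w^7$, where $k \in \{7,56\}$ in the non-split case and $k \in \{1,7\}$ in the split case.

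If $7 \mid k$, \cref{thm:superelliptic} says $w = \pm 1$ and that $(x,y,w)$ is one of the entries of \cref{tab:solutions}. If $7 \nmid k$, the Furio--Lombardo results \cite{Furio--Lombardo26}*{Corollary 3.3(1), Theorem 3.5(1)} also give $w = \pm 1$. For $k = 1$ in the split case, $F_{\spc}(x,y) = \pm 1$ is a cubic Thue equation, so it has finitely many solutions and they can be listed explicitly (for instance with a Thue solver). In every case $t$ therefore lies in a finite, explicit subset of $\Pone(\Q)$.

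Second, I go through this finite list. Points with $j(t) = 0$ are CM points, which is what the theorem allows. What remains is to show that the non-CM values, namely $t = 7, 7/3$ on $X_{\ns}^{+}(7)$, $t = 5/2, 4/3$ on $X_{\spc}^{+}(7)$, and any non-CM solutions with $k=1$, do not lift to rational points of the genus-$9$ curves. I would do this by a local obstruction in the fibre. Write the degree-$7$ cover over $t$ explicitly: on the model of \cref{sec:equations}, the fibre over $t$ is cut out by a condition of the form ``$c(t)$ is a $7$th power in $K$'' (or in $\Q$). One then checks that $c(t)$ is not a $7$th power locally at some prime, such as $7$ or a small prime splitting in $K$.

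As an alternative for each specific $j$, I would compute the $7$-adic image of an elliptic curve with $j$-invariant $j(t)$. For example, I would check that its mod-$49$ image is not contained in the relevant subgroup, using Frobenius traces at a few primes $p$. The traces must be incompatible with every twist, since the twists that realise the point $t$ on the level-$7$ curve are already determined.

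Finally, the cusps are set aside. Points above $j = 0$ are CM. Also, $j=1728$ does not occur on these curves over $\Q$, or if it does it is CM as well, so every non-cuspidal rational point has $j = 0$.

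The main obstacle is the second step: showing rigorously that each of the finitely many non-CM points fails to lift. This requires the explicit fibre description from \cref{sec:equations} and a careful local computation for each point. I would try the Frobenius-trace check first, because it is the most mechanical and least dependent on models.
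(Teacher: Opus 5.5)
Your handling of the non-CM points matches the paper's route. You reduce via \cref{prop:FL} to $F(x,y)=kw^{7}$, use \cref{thm:superelliptic} when $7\mid k$, and use the Furio--Lombardo theorem plus the Thue equation $F_{\spc}(x,y)=\pm1$ when $k=1$. The survivors are then killed by Frobenius at a small prime. The paper makes your ``Frobenius-trace check'' precise through \cref{lem:criterion-groups} and \cref{cor:criterion}: if $7\nmid a_p(a_p^2-4p)$, the $48$th power of the companion matrix of $\t^2-a_p\t+p$ must be scalar modulo $49$, and this condition is insensitive to quadratic twists. The ``$c(t)$ is a seventh power'' description of the fibre is not established anywhere and is not needed. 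Once those checks are carried out, this proves only the first sentence of the theorem: every rational point is a cusp or a CM point.

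The gap is the second sentence, that every non-cuspidal rational point has $j=0$. You discard CM points at the outset and never return to them. Your last paragraph simply asserts the conclusion, and the clause ``or if it does it is CM as well'' concedes exactly what must be excluded. There are rational CM points with $j\neq 0$ on the level-$7$ curves, and each must be shown not to lift:
on $X_{\spc}^{+}(7)$, the points $t=1,2,3$ (discriminants $-19,-12,-27$), which even appear among your own solutions of $F_{\spc}(x,y)=\pm1$;
on $X_{\ns}^{+}(7)$, points such as $j=1728,\,8000,\,-32768$, coming from orders in which $7$ is inert.

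The paper handles these as follows.
For $j=1728$ the twists are quartic, so your ``$a_p$ up to sign'' reasoning does not apply as is. The paper takes $E$ with image in $G$ (via \cite{RSZB-22}*{Proposition 12.1}) and writes it as a twist of $y^2=x^3-x$ by a character with values in $\Z[i]^{\times}$. It then applies \cref{lem:criterion-groups} to a Frobenius $\tau$ at a prime of $\Q(i)$ above $5$. Its image is $\pi\chi(\tau)$, with characteristic polynomial $\t^2-a\t+5$ and $a\in\{\pm2,\pm4\}$, and its $48$th power is not scalar modulo $49$.
For the other CM $j$ in the non-split case, \cite{Furio--Lombardo26}*{Lemma 2.11} gives $7\mid j$, and $0$ is the only rational CM $j$-invariant divisible by $7$.
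For the other CM $j$ in the split case, integrality of $j$ forces $y=\pm1$ and $F_{\spc}(x,y)\in\{\pm1,\pm7\}$. The resulting CM candidates $t=1,2,3$ are killed by \cref{cor:criterion} at $(p,\lvert a_p\rvert)=(5,1),(13,2),(13,5)$.
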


\begin{maincor}
  \label{cor:seven-adic-images}
  Theorem 1.7 of \cite{Furio--Lombardo26} holds unconditionally. In particular, for every
  elliptic curve $E$ over $\Q$ without complex multiplication, the $7$-adic
  image $G = \rho_{E,7^{\infty}}(\Gal_{\Q})$ satisfies one of the first two
  alternatives of \cite{RSZB-22}*{Theorem 1.6}: either $X_G$ has infinitely many
  rational points and $G$ is one of the groups of Sutherland and Zywina
  \cite{SZ-17}, or $G$ appears in \cite{RSZB-22}*{Table 1}.
\end{maincor}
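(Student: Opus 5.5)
The plan is to deduce the corollary formally from \cref{thm:modular-curves}, combined with the results of Furio and Lombardo and of Rouse, Sutherland and Zureick-Brown. Theorem 1.7 of \cite{Furio--Lombardo26} is stated conditionally. As I understand it, their argument proves the classification of $7$-adic images once the rational points of $X_{\ns}^{\sharp}(49)$ and $X_{\spc}^{\sharp}(49)$ are known to be cusps or CM points. They reduce that input to their Conjecture 1.6 on the rank-$3$ twist $X_{E_3}(7)$ of the Klein quartic. The first step is to re-read their proof of Theorem 1.7 and isolate exactly where Conjecture 1.6 enters. The expectation is that it is used only to rule out non-CM rational points on these two genus-$9$ curves, via the superelliptic equations with $7 \mid k$. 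That is precisely what \cref{thm:superelliptic} and \cref{thm:modular-curves} supply unconditionally, so substituting \cref{thm:modular-curves} at that point removes the hypothesis.

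For the ``in particular'' statement, let $E/\Q$ be non-CM and $G = \rho_{E,7^\infty}(\Gal_\Q)$. By \cite{RSZB-22}*{Theorem 1.6}, either $G$ falls into one of the first two alternatives, or $E$ gives a non-CM, non-cuspidal rational point on one of the exceptional curves. Only the level-$7$-power curves are relevant here, namely $X_{\ns}^{+}(49)$, $X_{\ns}^{\sharp}(49)$ and $X_{\spc}^{\sharp}(49)$. The curves $X_{\ns}^{+}(\ell)$ for $\ell>17$ and those of level $25$, $27$, $121$ concern other primes and do not constrain the $7$-adic image.

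I would dispose of these three curves as follows. If $E$ gave a rational point on $X_{\ns}^{+}(49)$, then $E$ would be CM, by \cite{Furio--Lombardo26}*{Theorem 1.4}. If $E$ gave a rational point on $X_{\ns}^{\sharp}(49)$ or $X_{\spc}^{\sharp}(49)$, that point would be non-cuspidal because $j(E)$ is finite. By \cref{thm:modular-curves} it would then have $j(E)=0$, making $E$ CM. Either way we contradict the assumption that $E$ is non-CM, so $G$ lies in the first two alternatives.

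The only real obstacle is bookkeeping. One must check that the rational-point condition in the third alternative of \cite{RSZB-22}*{Theorem 1.6} is exactly ``$X_G$ is one of the listed curves with a non-CM non-cuspidal rational point''. One must also check that the labels \mclabel{49.147.9.a.1} and \mclabel{49.196.9.a.1} match the curves $X_{\ns}^{\sharp}(49)$ and $X_{\spc}^{\sharp}(49)$ as defined in \cite{Furio--Lombardo26}*{Definition 2.7}. A further point is that $G$ may be contained in the relevant group only up to conjugacy. This is harmless, because $X_G$ has a rational point attached to $E$ whenever $G$ is conjugate to a subgroup of the group defining the curve.
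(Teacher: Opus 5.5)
Your proposal is correct and is essentially the paper's proof. The paper excludes alternative (iii) of \cite{RSZB-22}*{Theorem 1.6} at level $7^2$ by \cite{Furio--Lombardo26}*{Theorem 1.4}, and alternative (iv) by \cref{thm:modular-curves}, exactly as in your second and third paragraphs, so your preliminary step of re-reading Furio--Lombardo's proof to locate where their Conjecture 1.6 enters is unnecessary.
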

\begin{proof}
  Alternative (iii) of \cite{RSZB-22}*{Theorem 1.6} at level $7^2$ is excluded by
  \cite{Furio--Lombardo26}*{Theorem 1.4}, and alternative (iv) by \cref{thm:modular-curves}.
\end{proof}

\subsection*{Summary of the proofs}
Furio and Lombardo attach to a solution of $F(x,y) = kw^7$ a solution of a
generalized Fermat equation of signature $(2,3,7)$, by means of the covariants
of the cubic form $F$ \cite{Furio--Lombardo26}*{Proposition 3.2, Corollary 3.3} (compare
Bennett and Dahmen \cite{BD-13}*{Equation (4)}), and then apply the modular
method of Poonen, Schaefer and Stoll \cite{PSS-07}*{Section 4}: the solutions
are governed by rational points on twists $X_E(7)$ of the Klein quartic
\cite{HK-03}. When $7 \nmid k$ the equation is $a^2 + 28b^3 = 27c^7$ and all the
twists that occur have rank at most $2$; when $7 \mid k$ it is
$a^2 + 196b^3 = 27c^7$, and one twist of rank $3$ remains.

We instead descend directly on the superelliptic equation, in the style of
Kummer \cite{DG-95}*{Section 2.1}. This method is streamlined and explained by
Bruin in \cite{Bruin02-thesis}*{Section 3.1}, \cite{Bruin06-nn2}*{Section 3}.
The argument has six steps. Along the way, the set of classes by which we have
to twist shrinks: from the Selmer group $K(7,\calS)$ to a subset $\Delta$ of
$49$ classes in step~(2), and to $7$ of these in step~(4).

\begin{enumerate}[leftmargin=*, itemsep=0.4em]
\item \textbf{The equations (\cref{sec:equations}).} A rational point of
  $X_{\ns}^{\sharp}(49)$ or $X_{\spc}^{\sharp}(49)$ lies over a point $t = x/y$
  of the genus zero curve $X_{\ns}^{+}(7)$, respectively $X_{\spc}^{+}(7)$, and the
  exponent $7$ in $F(x,y) = kw^7$ comes from the primes of potentially
  multiplicative reduction. At such a prime the Tate curve makes inertia act
  unipotently, and the shape of the level $49$ group forces the valuation of
  $F(x,y)$ to be divisible by $7$. This recovers
  \cite{Furio--Lombardo26}*{Proposition 3.1}. In the split case the argument of
  \cite{Furio--Lombardo26} uses an element of the decomposition group that does not exist
  at the primes $p$ with $p^{6} \equiv 1 \pmod{49}$; the proof given here uses
  only inertia, together with the position of the rational cusp
  (\cref{lem:cusp-reduction}, \cref{rem:FL-split}).
\item \textbf{Kummer descent to twists of the Fermat septic
    (\cref{sec:descent}).} Let $\theta \in K$ be a root of $f$. Over $K$ the form
  $F(\x,\y)$ is the norm of $\x - \theta \y$, and since $K$ has class number one, a
  solution $(x,y,w)$ to \cref{eq:superelliptic-intro} yields
  $x - \theta y = \delta\omega^7$ with $\omega \in K^{\times}$ and $\delta$ an
  $\calS$-unit, i.e.,~a unit outside the set $\calS$ of primes dividing
  $k\disc(f)$. So $x-\theta y$ is a
  seventh power in $K$ up to a factor $\delta$ that only matters through its
  class in the Selmer group $K(7,\calS) = \OKS^{\times}/\OKS^{\times 7}$, and
  comparing norms restricts this class to an explicit subset $\Delta(\theta,k)$ of
  $49$ classes (\cref{prop:bruin}). Each $\delta \in \Delta(\theta,k)$ gives a plane
  curve, as follows. Write $\omega = a_0 + a_1\theta + a_2\theta^2$ with $a_n \in
  \Q$, and put $\mathbf{a} = (a_0,a_1,a_2)$. There are homogeneous ternary
  forms $P^{(0)}_{\theta,\delta}, P^{(1)}_{\theta,\delta},
  P^{(2)}_{\theta,\delta}$ of degree $7$ with rational coefficients such that
  \[
    \delta\,(a_0 + a_1\theta + a_2\theta^2)^{7}
    = P^{(0)}_{\theta,\delta}(\mathbf{a}) + P^{(1)}_{\theta,\delta}(\mathbf{a})\,\theta + P^{(2)}_{\theta,\delta}(\mathbf{a})\,\theta^{2}.
  \]
  Since $x - \theta y$ has no $\theta^{2}$ term, the point $(a_0:a_1:a_2)$ lies on the plane septic
  $C_{\theta,\delta} \colon P^{(2)}_{\theta,\delta} = 0$ over $\Q$, of genus
  $15$, and $(x : y) = (P^{(0)}_{\theta,\delta}(\mathbf{a}) : -P^{(1)}_{\theta,\delta}(\mathbf{a}))$.
  So the rational points of the curves $C_{\theta,\delta}$ account for all the
  solutions. Over $K$ each of these curves becomes a diagonal
  septic, a twist of the Fermat curve $\calF\colon \sfu_0^7 + \sfu_1^7 + \sfu_2^7
  = 0$ to which it is isomorphic over $\Qbar$. Remarkably, all six pairs $(F,k)$
  that arise in \cref{sec:equations} lead to the same $49$ twists of the Fermat
  septic.
\item \textbf{Quotient to twists of the Klein quartic (\cref{sec:klein}).} The
  Fermat septic $\calF$ is a cyclic unramified cover of degree $7$ of the
  Klein quartic curve
  $\calZ \colon \sfv_0^3\sfv_1 + \sfv_1^3\sfv_2 + \sfv_2^3\sfv_0= 0$
  \cite{Elkies-99}*{Section 3.2}. This cover survives the twisting: each of our
  septics $C_{\theta,\delta}$ maps, over $\Q$, onto a twist $Z_{\theta,\delta}$ of the Klein
  quartic, so it is enough to find the rational points of these plane quartics.
  Which twist occurs is recorded by the class of $\delta/f'(\theta)$ in the
  group $K^{\times}/\Q^{\times}(K^{\times})^{3+\sigma}$
  (\cref{lem:twisting}). For $\delta \in \Delta(\theta,k)$ this class lies in a
  subgroup of order $7$, so the $49$ septics of step~(2) map to at most $7$
  quartics up to isomorphism, and in fact to only three
  (\cref{prop:klein-twists}).
\item \textbf{The $7$-adic step (\cref{sec:proof-thm-1}).} When
  $7 \mid F(x,y)$, a short computation in the completion of $K$ at $7$ shows that
  this class vanishes (\cref{prop:seven-adic}): a homomorphism $\kappa$ to
  $\F_7$ must vanish on it. This rules out $42$ of the $49$ classes
  $\delta \in \Delta(\theta,k)$, and the remaining $7$ all give quartics $\Q$-isomorphic
  to one fixed quartic $Z_0$. A classical change of coordinates of Klein,
  defined over $K$ \cite{Klein-1879}*{\S 5}, \cite{Elkies-99}*{(1.3)},
  gives an isomorphism over $\Q$ between $Z_0$ and the Klein quartic itself
  (\cref{prop:klein-identity}, \cref{cor:Z0}). Conversely, for the other $42$
  classes the quartic $Z_{\theta,\delta}$ is not isomorphic to the Klein quartic
  over $\Q$ (\cref{prop:klein-twists}).
  The cases $7 \mid k$ are exactly those that were missing from \cite{Furio--Lombardo26}.
\item \textbf{Hurwitz (\cref{lem:hurwitz}).} The Klein quartic has only its
  three obvious rational points: Hurwitz \cite{Hurwitz-08} showed that any other
  would give a nontrivial solution of Fermat's equation of exponent $7$, and
  Elkies \cite{Elkies-99}*{Section 3.1} gives a direct proof through an
  elliptic quotient of rank zero.\footnote{Elkies also pointed out to us that an argument equivalent to Hurwitz's appears even earlier, in an 1864 paper of Genocchi \cite{Genocchi-64}.} Pulling the three points back gives the
  solutions listed in \cref{thm:superelliptic}. So, via Hurwitz, the last open
  case of the $7$-adic Program B rests on Fermat's Last Theorem for exponent
  $7$.
\item \textbf{From \cref{thm:superelliptic} to \cref{thm:modular-curves}
    (\cref{sec:proof-thm-2}).} \Cref{thm:superelliptic}, together with the
  result of Furio and Lombardo for $k = 1$ recalled before \cref{thm:modular-curves}, leaves eight non-CM
  $j$-invariants of points of $X_{\ns}^{+}(7)$ or $X_{\spc}^{+}(7)$ that could still lift to a rational point of
  $X_{\ns}^{\sharp}(49)$ or $X_{\spc}^{\sharp}(49)$ (\cref{tab:candidates}).
  One could rule this out using the equations for the maps $X_{\bullet}^{\sharp}(49) \to X_{\bullet}^{+}(7)$; instead we
  use the following group-theoretic criterion (\cref{cor:criterion}). Let $E$ be an
  elliptic curve over $\Q$ whose mod-$49$ Galois image is contained, up to
  conjugacy, in one of the groups $G_{\ns}^{\sharp}(49)$ or
  $G_{\spc}^{\sharp}(49)$ defining these two curves (\cref{def:G-sharp}), and let
  $p \neq 7$ be a prime of good reduction, with trace of Frobenius $a_p$. If
  $7 \nmid a_p(a_p^2 - 4p)$, then $\Frob_p^{48}$ acts on $E[49](\Qbar)$ as a
  scalar. Indeed, modulo $7$ such a Frobenius lies in the Cartan subgroup, so
  its $48$th power is $1$ modulo $7$, and the shape of the level $49$ group
  forces this power to be scalar modulo $49$. For each of the eight candidates,
  the criterion fails at a small prime $p$. The CM points are handled by the
  same criterion, together with \cite{Furio--Lombardo26}*{Lemma 2.11} in the non-split
  case.
\end{enumerate}

The relevance of quotients of Fermat curves to descent problems of this kind
goes back at least to Faddeev \cite{Faddeev-61}, Gross and Rohrlich
\cite{GR-78}, McCallum \cites{McCallum-88}, and McCallum and Tzermias
\cite{MT-03}. Twists of the Fermat septic covering twists of the Klein quartic
also appear in \cite{PSS-07}*{Section 10.1}. \Cref{fig:tower} summarizes the
curves and maps involved.

\begin{figure}[ht]
  \centering
  \begin{tikzcd}[column sep=large, row sep=large]
    \calF \arrow[r, "\psi"] \arrow[d, dotted, "\cong"'] & \calZ \arrow[d, dotted, "\cong"] & \\
    C_{\theta,\delta} \arrow[r, "\psi"] \arrow[dr, "\phi_{\theta,\delta}"'] &
    Z_{\theta,\delta} \arrow[d, "\varphi_{\theta,\delta}"] &
    X_{\bullet}^{\sharp}(49) \arrow[dl] \\
    & \Pone_{\t} = X_{\bullet}^{+}(7) \arrow[d, "j_{\bullet}"] & \\
    & X(1) &
  \end{tikzcd}
  \caption{The curves of the proof, for one $\delta \in \Delta(\theta,k)$. The two
    vertical isomorphisms (dotted) are defined over $\Qbar$, everything else over $\Q$. The maps $\psi$ and
    $\varphi_{\theta,\delta}$ have degree $7$, and $\phi_{\theta,\delta}$ has
    degree $49$ and is branched exactly over the three roots of $f$, which are
    cusps of $X_{\bullet}^{+}(7)$. The covering $X_{\bullet}^{\sharp}(49) \to
    X_{\bullet}^{+}(7)$ has degree $7$, and $j_{\bullet}$ has degree $21$ for
    $\bullet = \ns$ and $28$ for $\bullet = \spc$. A solution $(x,y,w)$ of
    $F(x,y) = kw^{7}$ is a rational point $(x:y)$ of $\Pone_{\t}$ in the image of
    $\phi_{\theta,\delta}$ for some $\delta \in \Delta(\theta,k)$. When $7 \mid k$, every
    solution comes from a $\delta$ with $\kappa(\delta/f'(\theta)) = 0$; these
    are exactly the $\delta$ for which $Z_{\theta,\delta}$ is isomorphic to
    $\calZ$ over $\Q$ (\cref{prop:seven-adic}, \cref{cor:Z0},
    \cref{prop:klein-twists}), and the three rational points of
    that quartic map to $t \in \{0, 7, 7/3\}$ for $\bullet = \ns$ and to
    $t \in \{-1, 5/2, 4/3\}$ for $\bullet = \spc$, the points of
    \cref{thm:superelliptic}.}
  \label{fig:tower}
\end{figure}

\subsection*{Code}
Every computational claim in this paper is verified by \Magma{} \cite{Magma}
code. The code and the source of this paper are available at
\url{https://github.com/sarangop1728/7-adic-settlers-of-cartan}. Both are generated
from one literate file, in which each statement of the paper is followed by the
code that verifies it. There is one
script for each of
\cref{sec:equations,sec:descent,sec:klein,sec:proof-thm-1,sec:proof-thm-2}, and
a script \texttt{descent.m} that runs the descent of
\cref{sec:descent,sec:klein,sec:proof-thm-1} from the Selmer sets to
\cref{tab:solutions}; each runs in under a minute. A web version of the
paper, at \url{https://sarangop1728.github.io/7-adic-settlers-of-cartan/}, shows the
code and its output below the statements it verifies.

\subsection*{AI use disclosure}
Throughout this project we used AI models (GPT-5.6 Sol, and Claude Opus 5, Opus
5.5, and Fable 5.1) for mathematical exploration, to find simplifications of
our arguments, to write and run \Magma{} code, and for copy-editing. The origin
of the main ideas and the precise mathematical use of AI are described in more
detail in \cref{sec:ai}. All mathematical results in this paper, and the code
and computations verifying them, have been thoroughly vetted by the authors,
who take full responsibility for the content of this work.

\subsection*{Acknowledgements}
We thank Filip Najman for pointing out to us that everything reduces to the
Klein quartic.

\subsection*{Notation}
Throughout, $\zeta_7$ is a fixed primitive seventh root of unity,
$K = \Q(\zeta_7)^{+}$ is the totally real cyclic cubic field of discriminant
$49$, which has class number one, and $\sigma$ is the generator of $\Gal(K/\Q)$
induced by $\zeta_7 \mapsto \zeta_7^{2}$. We write $\Tr$ and $\Norm$ for the trace and norm
of $K/\Q$. Lower-case sans-serif letters $\x,\y,\w,\t,\dots$ are polynomial
variables, and the corresponding italic letters are their values. We write $f$
for one of the cubics in \cref{eq:the-cubics}, $F$ for its homogenization, and
$\theta \in K$ for the following root of $f$:
\begin{equation}
  \label{eq:thetas}
  \theta_{\spc} \colonequals 1 - (\zeta_7 + \zeta_7^{-1}),
  \qquad
  \theta_{\ns} \colonequals 3 + 2(\zeta_7 + \zeta_7^{-1}) = 5 - 2\theta_{\spc} .
\end{equation}
One has $\sigma(\theta_{\spc}) = -\theta_{\spc}^2 + 2\theta_{\spc} + 2$, the ring
of integers of $K$ is $\OO_K = \Z[\theta_{\spc}]$, and $\Z[\theta_{\ns}]$ has
index $8$ in $\OO_K$. The prime $2$ is inert in $K$ and $7$ is totally ramified.
Since $\theta_{\ns} - \sigma^{i}(\theta_{\ns}) = -2\bigl(\theta_{\spc} -
\sigma^{i}(\theta_{\spc})\bigr)$ and $f'(\theta) = \prod_{i=1,2}\bigl(\theta -
\sigma^{i}(\theta)\bigr)$,
\begin{equation}
  \label{eq:fprime}
  f_{\ns}'(\theta_{\ns}) = 4f_{\spc}'(\theta_{\spc}),
  \qquad
  \Norm\bigl(f_{\spc}'(\theta_{\spc})\bigr) = -\disc(f_{\spc}) = -7^{2} .
\end{equation}

\section{The equations}
\label{sec:equations}

This section rederives \cite{Furio--Lombardo26}*{Proposition 3.1}, the passage from
rational points on the modular curves
$X_{\ns}^{\sharp}(49)$ and $X_{\spc}^{\sharp}(49)$ to the superelliptic equations.

\subsection{Zywina's models}
By \cite{Zywina-15}*{Section 1.4, Theorem 1.5(ii)}, where $j_{\ns}$ and
$j_{\spc}$ below are the functions $J_6$ and $J_2$ (see also
\cite{Furio--Lombardo26}*{Theorem 2.1}), the $j$-maps of $X_{\ns}^{+}(7) \cong \Pone_{\t}$ and
$X_{\spc}^{+}(7) \cong \Pone_{\t}$ are given by the rational functions
\begin{align}
  \label{eq:jns}
  j_{\ns}(\t) &= \frac{64\,\t^{3}(\t^{2}+7)^{3}(\t^{2}-7\t+14)^{3}(5\t^{2}-14\t-7)^{3}}
               {(\t^{3}-7\t^{2}+7\t+7)^{7}}, \\
  \label{eq:jsp}
  j_{\spc}(\t) &= \frac{\t(\t+1)^{3}(\t^{2}-5\t+1)^{3}(\t^{2}-5\t+8)^{3}
                 (\t^{4}-5\t^{3}+8\t^{2}-7\t+7)^{3}}
               {(\t^{3}-4\t^{2}+3\t+1)^{7}} .
\end{align}
The first has degree $21$ and its three poles, the roots of
$f_{\ns}$, are the cusps of $X_{\ns}^{+}(7)$. The second has degree $28$; it has
poles of order $7$ at the roots of $f_{\spc}$ and at $t = \infty$, the four
cusps of $X_{\spc}^{+}(7)$, of which only $t = \infty$ is rational.
Homogenizing, $j_{\ns} = H_{\ns}^{3}/F_{\ns}^{7}$ and $j_{\spc} = \x
H_{\spc}^{3}/(\y F_{\spc})^{7}$, where $F_{\ns}$ and $F_{\spc}$ are the cubic
forms \eqref{eq:the-cubic-forms} and
\begin{align*}
  H_{\ns}(\x,\y) &\colonequals 4\x(\x^{2}+7\y^{2})(\x^{2}-7\x\y+14\y^{2})(5\x^{2}-14\x\y-7\y^{2}), \\
  H_{\spc}(\x,\y) &\colonequals (\x+\y)(\x^{2}-5\x\y+\y^{2})(\x^{2}-5\x\y+8\y^{2})
                   (\x^{4}-5\x^{3}\y+8\x^{2}\y^{2}-7\x\y^{3}+7\y^{4}) .
\end{align*}
The following elementary facts about these forms are used throughout.

\begin{lemma}
  \label{lem:arithmetic-of-forms}
  Let $x,y$ be coprime integers.
  \begin{enumerate}
  \item \label{item:gcd} The greatest common divisor of $F_{\ns}(x,y)$ and
    $H_{\ns}(x,y)$ divides $2^{21}7^{7}$, and that of $xH_{\spc}(x,y)$ and
    $yF_{\spc}(x,y)$ divides $7^{7}$. Moreover $\gcd(y,F(x,y)) = 1$ for both
    forms.
  \item \label{item:v7} $v_7(F(x,y)) \leq 1$ for both forms. Moreover:
    \begin{enumerate}
    \item \label{item:v7-ns} $7 \mid F_{\ns}(x,y)$ if and only if $7 \mid x$;
    \item \label{item:v7-sp} $7 \mid F_{\spc}(x,y)$ if and only if $7 \mid x+y$.
    \end{enumerate}
  \item \label{item:v2} $v_2(F_{\ns}(x,y)) \in \{0,3\}$, with value $3$ exactly
    when $x$ and $y$ are both odd; and $F_{\spc}(x,y)$ is odd.
  \item \label{item:substitution} $F_{\ns}(\x,\y) = F_{\spc}(\x-5\y,\,-2\y)$.
  \end{enumerate}
\end{lemma}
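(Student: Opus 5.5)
The plan is to prove the four items by elementary resultant computations and local checks at $2$ and $7$, proving (4) first since it lets us transfer statements between the two forms.

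\textbf{Item (4).} Expand $F_{\spc}(\x-5\y,-2\y)$ directly. With $u=\x-5\y$ and $v=-2\y$, compute $u^3-4u^2v+3uv^2+v^3$. The $\x^3$ coefficient is $1$. The $\x^2\y$ coefficient is $-15+8=-7$. The $\x\y^2$ coefficient is $75-80+12=7$. The constant term in $\y^3$ is $-125+200-60-8=7$. So the expansion recovers $F_{\ns}$.

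\textbf{Item (1).} Use resultants. For binary forms $A,B$ with integer coefficients and coprime integers $x,y$, $\gcd(A(x,y),B(x,y))$ divides $\Res(A,B)$. This follows from the identities $U A+VB=\Res(A,B)\,\x^{m}$ and $U'A+V'B=\Res(A,B)\,\y^{m}$ with integral $U,V,U',V'$.
\begin{itemize}
\item Compute $\Res(F_{\ns},H_{\ns})$ and check that it is $\pm 2^{a}7^{b}$. Then sharpen to $2^{21}7^{7}$, either from the exact factorization or prime by prime via valuations.
\item Do the same for $\Res(\x H_{\spc},\y F_{\spc})$ and get a power of $7$.
\item $\gcd(y,F(x,y))=\gcd(y,x^3)=1$, because both cubics are monic in $\x$.
\end{itemize}
A cleaner route to the sharp exponents: any common prime $p$ divides the discriminant-type quantity. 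Bound $\min(v_p(F),v_p(H))$ using (2) and (3): $v_7(F)\le 1$ and $v_2(F_{\ns})\le 3$. Hence $\gcd$ divides $2^{3}7$, which is stronger than the stated bound. For the split case, with $x$ and $y$ coprime, $\gcd(x,F_{\spc})=\gcd(x,y^3)=1$, and $\gcd(y,\cdot)=1$ as above. So one only needs $\gcd(H_{\spc},F_{\spc})\mid 7^{\infty}$ together with $v_7(F_{\spc})\le1$. I would take this route; the only input is that the resultants are supported at $2$ and $7$, a finite computation.

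\textbf{Item (2).} Since $7$ is totally ramified in $K$, we have $f_{\spc}\equiv(\t+1)^3 \pmod 7$: indeed $(\t+1)^3=\t^3+3\t^2+3\t+1$ and $-4\equiv3$. Likewise $f_{\ns}\equiv \t^3 \pmod 7$. This gives the "if and only if" statements (a) and (b), using $7\nmid y$ in the relevant cases; note that if $7\mid y$ then $F\equiv x^3\not\equiv0$.
\begin{itemize}
\item For the valuation bound in the split case, write $x=-y+7m$ and expand: $F_{\spc}(-y+7m,y)=F_{\spc}(-1,1)y^3+7m\,\partial_\x F_{\spc}(-1,1)y^2+O(49)$. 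Here $F_{\spc}(-1,1)=-1-4-3+1=-7$ and $\partial_\x F_{\spc}(-1,1)=3+8+3=14$, so the value is $\equiv -7y^3 \pmod{49}$. Hence $v_7=1$.
\item For the non-split case, $F_{\ns}(7m,y)\equiv 7y^3 \pmod{49}$, so again $v_7=1$. Alternatively this follows from the split case through (4), since $x+y$ corresponds to $\x-7\y$.
\end{itemize}

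\textbf{Item (3).} $F_{\spc}\equiv \x^3+\x\y^2+\y^3 \pmod 2$, which is odd at every pair other than $(0,0)$, i.e.\ on every coprime pair. For $F_{\ns}$:
\begin{itemize}
\item If exactly one of $x,y$ is even, then $F_{\ns}\equiv x^3+xy^2+y^3$ is odd.
\item If both are odd, use (4): $F_{\ns}(x,y)=F_{\spc}(x-5y,-2y)$. Reducing modulo $8$ over the finitely many residue classes of odd $x,y$ modulo $16$ shows the valuation is exactly $3$.
\end{itemize}
For a conceptual version of the last bullet, $2$ is inert and $\Z[\theta_{\ns}]$ has index $8$. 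The value is $\Norm(x-\theta_{\ns}y)$, and when $x$ and $y$ are odd, $x-\theta_{\ns}y=x-5y+2y\theta_{\spc}$ has both coordinates in $\OO_K=\Z[\theta_{\spc}]$ even. Dividing by $2$ gives $(x-5y)/2+y\theta_{\spc}$, whose $\theta_{\spc}$-coefficient $y$ is odd, so it is not divisible by the inert prime $2$. Hence the norm has $v_2=3$ exactly.

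\textbf{Main obstacle.} The only genuinely computational step is the resultant bound in (1), checking that the resultants are supported only at $2$ and $7$. I would verify this in \Magma{}; everything else is the local analysis above.
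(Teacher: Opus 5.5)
Your proposal is correct and follows essentially the paper's route: resultants for (1), congruences modulo $7$, $49$, $2$ and $16$ (together with the inertness of $2$) for (2) and (3), and a direct check for (4). Your sharpening of (1) to $\gcd \mid 2^{3}\cdot 7$ and $\gcd \mid 7$, using the resultant only to locate the primes and then (2) and (3), is valid and attained at $(x,y)=(7,1)$. One slip: a finite check showing $v_2(F_{\ns}(x,y))=3$ exactly must be done modulo $16$, not $8$, though your argument via the inert prime $2$ and $x-\theta_{\ns}y=(x-5y)+2y\theta_{\spc}$ already settles it.
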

\begin{proof}
  One computes the resultants
  $\operatorname{res}_{\x}(F_{\ns},H_{\ns}) = \pm 2^{21}7^{7}\y^{21}$ and
  $\operatorname{res}_{\x}(F_{\spc},H_{\spc}) = \pm 7^{7}\y^{27}$. Both forms
  $F$ are congruent to $\x^{3}$ modulo $\y$, and $H_{\spc} \equiv \x^{9}$ modulo
  $\y$, while $F_{\spc} \equiv \y^{3}$ modulo $\x$; this gives the coprimality
  statements and \eqref{item:gcd}. Parts \eqref{item:v7} and \eqref{item:v2}
  are congruence computations. Call a pair $(x,y) \in (\Z/n\Z)^2$ \cdef{primitive}
  if it is not congruent to $(0,0)$ modulo any prime $p \mid n$; the reduction of
  a coprime pair of integers is primitive. Then $F(x,y) \equiv 0 \pmod{49}$ and
  $F_{\ns}(x,y) \equiv 0 \pmod{16}$ have no primitive solutions,
  $F_{\ns} \equiv \x^3$ and $F_{\spc} \equiv (\x+\y)^3 \pmod 7$,
  $F_{\ns} \equiv (\x+\y)^3 \pmod 2$, and $F_{\spc}(x,y) \equiv 1 \pmod 2$ for the three
  primitive pairs $(1,0)$, $(0,1)$, $(1,1)$ modulo $2$. That
  $v_2(F_{\ns}(x,y))$ is a multiple of $3$ also follows from the fact that $2$
  is inert in $K$. 
  Part \eqref{item:substitution} holds because
  $\theta_{\ns} = 5 - 2\theta_{\spc}$: indeed $x - \theta_{\ns}y = (x - 5y) - \theta_{\spc}(-2y)$.
\end{proof}

\subsection{The groups}
\label{sec:groups}
We recall \cite{Furio--Lombardo26}*{Definitions 2.6 and 2.7}. Let
$\varepsilon = 3$, the least quadratic non-residue modulo $7$. For
$\bullet \in \{\ns,\spc\}$, \cref{tab:groups} lists the Cartan subalgebra
$\mathfrak{c}_{\bullet}$ of $M_2(\F_7)$, the Cartan subgroup
$C_{\bullet}(7) = \mathfrak{c}_{\bullet} \cap \GL_2(\F_7)$ and its normalizer
$C_{\bullet}^{+}(7)$ in $\GL_2(\F_7)$, and a subspace $V_{\bullet}$ of
$M_2(\F_7)$. Let $N \colonequals I + 7M_2(\F_7)$ be the kernel of
reduction $\GL_2(\Z/49) \to \GL_2(\F_7)$. Whenever $\bullet$ is fixed, as in
the rest of this section, in \cref{prop:multiplicative-primes} and in
\cref{lem:criterion-groups}, we drop it and the level from the notation, and
write $\mathfrak{c}$, $C$, $C^{+}$, $V$ and $G^{\sharp}$ for
$\mathfrak{c}_{\bullet}$, $C_{\bullet}(7)$, $C_{\bullet}^{+}(7)$, $V_{\bullet}$
and $G_{\bullet}^{\sharp}(49)$.

\begin{table}[ht]
  \centering
  \tablesetup
  \begin{tabular}{IcIc:c:c:cI}
    \toprule
    \headrow
    $\bullet$ & $\mathfrak{c} = \mathfrak{c}_{\bullet}$ & $C = C_{\bullet}(7)$ & $C^+ = C_{\bullet}^{+}(7)$
      & $V = V_{\bullet}$ \\
    \headrule
    \padrows{5pt}
    $\ns$ & $\tabmatrix{a & \varepsilon b \\ b & a}$
      & $\tabmatrix{a & \varepsilon b \\ b & a}$
      & $C_{\ns}(7) \cup \tabmatrix{1 & 0 \\ 0 & -1} C_{\ns}(7)$
      & $\tabmatrix{a & \varepsilon b \\ -b & d}$ \\
    \rowsep
    $\spc$ & $\tabmatrix{a & 0 \\ 0 & d}$
      & $\tabmatrix{a & 0 \\ 0 & d}$
      & $C_{\spc}(7) \cup \tabmatrix{0 & 1 \\ 1 & 0} C_{\spc}(7)$
      & $\tabmatrix{a & b \\ c & a}$ \\
    \padrows{0pt}
    \bottomrule
  \end{tabular}
  \caption{The Cartan data at level $7$. Here, $C_{\bullet}(7)\subset \GL_2(\F_7)$ and
    $V_{\bullet} \subset M_2(\F_7)$.}
  \label{tab:groups}
\end{table}

The groups $G^{\sharp}$ of Furio and Lombardo are the subgroups of
$\GL_2(\Z/49)$ that reduce onto $C^{+}$ and meet $N$ in $I + 7V$. The
following lemma shows that such a subgroup exists and is unique up to
conjugation, and records its properties; the definition follows it.

\begin{lemma}
  \label{lem:groups}
  Let $\bullet \in \{\ns,\spc\}$.
  \begin{enumerate}
  \item \label{item:V} $V$ is the direct sum of the scalar matrices
    and of the orthogonal complement of $\mathfrak{c}$ for the trace
    pairing $(A,B) \mapsto \operatorname{tr}(AB)$. It has dimension $3$, it is
    stable under conjugation by $C^{+}$, and
    $V \cap \mathfrak{c}$ consists of the scalar matrices.
  \item \label{item:G-unique} There is a subgroup $G$ of $\GL_2(\Z/49)$ that
    reduces onto $C^{+}$ and meets $N$ in $I + 7V$, and
    any two such subgroups are conjugate by an element of $N$.
  \item \label{item:G-properties} Such a group $G$ has index $147$ if
    $\bullet = \ns$ and $196$ if $\bullet = \spc$, it has surjective
    determinant, and it contains $-I$.
  \end{enumerate}
\end{lemma}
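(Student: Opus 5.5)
Part (1) is linear algebra over $\F_7$. I will compute $\mathfrak{c}^{\perp}$ for the pairing $\operatorname{tr}(AB)$ directly from \cref{tab:groups}.

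In the split case, $\mathfrak{c}$ is the diagonal matrices, so $\mathfrak{c}^\perp$ consists of the off-diagonal matrices. Adding the scalars gives $\{\begin{pmatrix} a & b\\ c& a\end{pmatrix}\}=V_{\spc}$.

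In the non-split case, pairing $\begin{pmatrix} p & q\\ r & s\end{pmatrix}$ against $I$ and against $\begin{pmatrix}0&\varepsilon\\1&0\end{pmatrix}$ gives the conditions $p+s=0$ and $\varepsilon r+q=0$. So $\mathfrak{c}^\perp=\{\begin{pmatrix} p&\varepsilon b\\ -b&-p\end{pmatrix}\}$, and adding the scalars yields $V_{\ns}$.

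Since $7\nmid 2$, the scalars are not in $\mathfrak{c}^\perp$, because $\operatorname{tr}(I\cdot I)=2\neq0$. Hence the sum is direct and $\dim V=1+2=3$. Conjugation by $C^+$ preserves $\mathfrak{c}$, since $C^+$ normalizes it, and preserves the trace pairing, so it preserves $\mathfrak{c}^\perp$ and the scalars, hence $V$. Finally, $\mathfrak{c}\cap\mathfrak{c}^\perp=0$ because the trace form restricted to $\mathfrak{c}$ is nondegenerate. For $\mathfrak{c}=\F_7\times\F_7$ or $\F_{49}$ this is the trace form of an étale algebra, and $7$ is odd. Therefore $V\cap\mathfrak{c}$ is exactly the scalars.

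For part (2) I use the extension $1\to N\to \pi^{-1}(C^+)\to C^+\to1$, where $N\cong M_2(\F_7)$ is abelian. Its conjugation action factors through $\GL_2(\F_7)$ acting on $M_2(\F_7)$ by conjugation. The subgroups $H\subseteq\pi^{-1}(C^+)$ with $H\cap N=I+7V$ and $\pi(H)=C^+$ correspond to complements of $M_2/V$ in $\pi^{-1}(C^+)/(I+7V)$; this quotient makes sense because $I+7V$ is normal by part (1).

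Since $|C^+|$ divides $2\cdot48$ or $2\cdot36$ and is prime to $7$, Schur--Zassenhaus applies. Complements exist because $H^2(C^+,M_2/V)=0$, and they are all conjugate under $M_2/V$ because $H^1=0$. Conjugacy by $N/(I+7V)$ lifts to conjugacy by $N$. This gives existence and uniqueness up to $N$-conjugacy.

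For part (3) I compute the index. We have $|G|=|C^+|\cdot 7^3$ and $|\GL_2(\Z/49)|=|\GL_2(\F_7)|\cdot7^4$. Hence the index is $7\,[\GL_2(\F_7):C^+]$, which is $7\cdot21=147$ in the non-split case and $7\cdot28=196$ in the split case.

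For $-I$, let $g\in G$ be any lift of $-I\in C^+$. Then $g^2\in G\cap N$. Replacing $g$ by $g^{7}$, which still reduces to $-I$ since $(-I)^7=-I$, and noting that elements of $N$ have $7$th power $I$, I expect to land on an element of order $2$ over $-I$. More cleanly, I will take the Schur--Zassenhaus complement, which is isomorphic to $C^+$ and contains a unique-up-to-$N$ element of order $2$ lifting $-I$. The element $-I\in\GL_2(\Z/49)$ is central of order $2$ and lies in $\pi^{-1}(C^+)$. The subgroup $\langle G,-I\rangle$ still meets $N$ in $I+7V$, because $-I$ centralizes everything and $(-I)\cdot g$ with $g\in G$ over $-I$ is in $N$. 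Hence $(-I)g\in N\cap\langle G,-I\rangle$. Order considerations show this intersection is still $I+7V$ once one checks $(-I)g$ has order dividing $7$ and lies in $I+7V$. This is the delicate step. I will handle it by choosing $g$ of order $2$, which is possible via the complement, so that $(-I)g$ is an element of $N$ of order dividing $2$, and hence equals $I$.

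For surjective determinant, $\det G\subseteq(\Z/49)^\times$ surjects onto $\F_7^\times$ because $\det C^+=\F_7^\times$. The kernel $1+7\Z/49$ is hit by $\det(I+7A)=1+7\operatorname{tr}A$, and $V$ contains matrices of nonzero trace, for example $I$.

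The main obstacle is the cohomological bookkeeping in part (2), specifically verifying that complements in the quotient correspond exactly to the desired $G$. The key point is that the normalizer of $I+7V$ contains $\pi^{-1}(C^+)$, which follows from the $C^+$-stability in part (1).
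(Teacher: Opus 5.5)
Your proposal is correct, and for part (1), part (2), the index and the determinant it follows the paper. It uses the same computation of $\mathfrak{c}^{\perp}$, and Schur--Zassenhaus for the order-$7$ normal subgroup $N/(I+7V)$ of the preimage of $C^{+}$ modulo $I+7V$; your vanishing of $H^{1}$ and $H^{2}$ is the abelian case of that theorem.

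There are two small points in that part. The paper checks $V\cap\mathfrak{c}=\F_7 I$ by inspecting matrices, while you use nondegeneracy of the trace form of the \'etale algebra $\mathfrak{c}$. That nondegeneracy holds in every characteristic, so ``$7$ is odd'' is not the reason. Also, $\det(C^{+})=\F_7^{\times}$ needs the one-line justification the paper gives: $ad$ on the split Cartan, and the norm $\F_{49}^{\times}\to\F_7^{\times}$ on the non-split one.

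The real difference is the proof that $-I\in G$. The paper writes a lift of $-I$ as $g=-(I+7B)$ and observes $g^{2}=I+14B\in G\cap N=I+7V$. Since $2$ is invertible modulo $7$, it deduces $B\in V$, and so $-I=g(I+7B)^{-1}\in G$.

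Your order argument also works, but as written it hedges and detours. The group $\langle G,-I\rangle$ is not needed. Moreover, ``an element of order $2$ via the complement'' is only right for a complement of $I+7V$ inside $G$. An arbitrary lift to $G$ of the involution in the complement $H\subset Q$ of part (2) need not be an involution, since its square is some $I+14B$ with $B\in V$.

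The remark you started with already finishes the proof. Since $N$ has exponent $7$ and $-I$ is central, $g^{7}=-(I+7B)^{7}=-I$ for every $g\in G$ over $-I$. This version is slightly stronger than the paper's: it shows that any subgroup of $\GL_2(\Z/49)$ whose reduction contains $-I$ contains $-I$, with no use of $G\cap N=I+7V$. The paper's version is equally short but relies on that hypothesis.
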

\begin{proof}
  \eqref{item:V} The subalgebra $\mathfrak{c}_{\spc}$ is spanned by
  $\begin{psmallmatrix*}[r] 1 & 0 \\ 0 & 0\end{psmallmatrix*}$ and
  $\begin{psmallmatrix*}[r] 0 & 0 \\ 0 & 1\end{psmallmatrix*}$, and
  $\mathfrak{c}_{\ns}$ by $I$ and
  $J \colonequals \begin{psmallmatrix*}[r] 0 & \varepsilon \\ 1 & 0\end{psmallmatrix*}$.
  For $B = \begin{psmallmatrix*}[r] a & b \\ c & d\end{psmallmatrix*}$, the
  traces of these four products with $B$ are $a$, $d$, $a + d$ and
  $b + \varepsilon c$. So the orthogonal complement of $\mathfrak{c}_{\spc}$ is
  the space of matrices with zero diagonal, and that of $\mathfrak{c}_{\ns}$ the
  space of matrices
  $\begin{psmallmatrix*}[r] a & -\varepsilon c \\ c & -a\end{psmallmatrix*}$;
  neither contains a nonzero scalar, and adding the scalar matrices gives
  $V_{\spc}$ and (renaming $c$ as $-b$) $V_{\ns}$, which are thus the stated
  direct sums, of dimension $3$. Conjugation by $g \in C^{+}$ fixes the
  scalars, preserves the trace pairing and normalizes $C$, hence
  $\mathfrak{c}$ and its orthogonal complement; so it preserves
  $V$. Finally, a diagonal matrix in $V_{\spc}$ is scalar, and
  $\begin{psmallmatrix*}[r] a & \varepsilon b \\ b & a\end{psmallmatrix*} \in V_{\ns}$
  forces $b = -b = 0$.

  \eqref{item:G-unique} Modulo $49$ we have $(I + 7A)(I + 7B) = I + 7(A + B)$
  and $h(I + 7A)h^{-1} = I + 7\bar hA\bar h^{-1}$, with $\bar h$ the reduction
  of $h$. So $A \mapsto I + 7A$ is an isomorphism of $M_2(\F_7)$ onto $N$, and
  by \eqref{item:V} the subgroup $I + 7V$ is normal in the preimage
  $P$ of $C^{+}$ in $\GL_2(\Z/49)$. Put
  $Q \colonequals P/(I + 7V)$ and $\bar N \colonequals N/(I + 7V)$,
  a normal subgroup of $Q$ of order $7$ with $Q/\bar N \cong C^{+}$
  of order $72$ or $96$, prime to $7$. If $G$ is as in the statement, then
  $I + 7V \subseteq G \subseteq P$, and $H \colonequals G/(I + 7V)$
  is a complement of $\bar N$ in $Q$: $H \cap \bar N = (G \cap N)/(I + 7V)$
  is trivial, and $H$ maps onto $Q/\bar N$ because $G$ reduces onto
  $C^{+}$. Conversely, the preimage in $P$ of a complement of
  $\bar N$ is a subgroup as in the statement. By the Schur--Zassenhaus
  theorem, complements of $\bar N$ exist, and any two are conjugate in
  $Q = \bar N H$, hence by an element of $\bar N$; taking preimages, any two
  subgroups as in the statement are conjugate by an element of $N$.

  \eqref{item:G-properties} As $G \cap N = I + 7V$ has order $7^{3}$
  and $G/(G \cap N) \cong C^{+}$, we have
  $\#G = 7^{3}\,\#C^{+}$; since
  $\#\GL_2(\Z/49) = 7^{4}\,\#\GL_2(\F_7) = 7^{5} \cdot 288$, the index of $G$
  is $49 \cdot 288/72 = 196$ in the split case and $49 \cdot 288/96 = 147$ in
  the non-split case. Next, $\det(I + 7A) = 1 + 7\operatorname{tr}(A)$ modulo
  $49$ and $\operatorname{tr}(aI) = 2a$, so $\det(G)$ contains
  $\det(I + 7V) = 1 + 7\Z/49$, the kernel of
  $(\Z/49)^{\times} \to \F_7^{\times}$; and $\det(G)$ reduces onto
  $\det(C^{+}) = \F_7^{\times}$, since $\det$ is $ad$ on
  $C_{\spc}(7)$ and the norm $\F_{49}^{\times} \to \F_7^{\times}$ on
  $C_{\ns}(7)$. Hence $\det(G) = (\Z/49)^{\times}$. Finally, let $g \in G$
  reduce to $-I$ and write $g = -(I + 7B)$. Then
  $g^{2} = I + 14B \in G \cap N = I + 7V$, so $2B$, hence $B$, lies in
  $V$; thus $I + 7B \in G$ and $-I = g(I + 7B)^{-1} \in G$.
\end{proof}

\begin{definition}[\cite{Furio--Lombardo26}*{Definition 2.7}]
  \label{def:G-sharp}
  For $\bullet \in \{\ns,\spc\}$, let $G_{\bullet}^{\sharp}(49)$ be a subgroup of
  $\GL_2(\Z/49)$ that reduces onto $C^{+}$ and meets the kernel of reduction in
  $I + 7V$; by \cref{lem:groups} it is well defined up to
  conjugation, and so is the
  corresponding modular curve $X_{\bullet}^{\sharp}(49)$. These curves are
  $X_{\ns}^{\sharp}(49) =$ \mclabel{49.147.9.a.1} and $X_{\spc}^{\sharp}(49) =$
  \mclabel{49.196.9.a.1} \cite{Furio--Lombardo26}*{Remark 2.8}.
\end{definition}

There are exactly seven such subgroups, all conjugate by $N$. In the proof of
\cref{lem:groups}\eqref{item:G-unique} they correspond to the complements $H$
of $\bar N$ in $Q$. An element of $\bar N$ that normalizes $H$ centralizes it,
since $H \cap \bar N$ is trivial; but
$\bar N \cong M_2(\F_7)/V \cong \mathfrak{c}/\F_7 I$ (as $\mathfrak{c}$ meets its orthogonal complement
trivially), and the elements of $C^{+}$ outside $C$ act on it by $-1$. So the
seven $\bar N$-conjugates of $H$ are distinct.

\begin{lemma}
  \label{lem:nilpotents}
  The nonzero nilpotent elements of $V_{\spc}$ are the nonzero multiples of
  $\begin{psmallmatrix*}[r] 0 & 1 \\ 0 & 0\end{psmallmatrix*}$ and of
  $\begin{psmallmatrix*}[r] 0 & 0 \\ 1 & 0\end{psmallmatrix*}$. The space $V_{\ns}$
  contains no nonzero nilpotent element.
\end{lemma}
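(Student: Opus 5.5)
The plan is a direct computation with the characterization of nilpotent $2\times 2$ matrices over $\F_7$: a matrix $B \in M_2(\F_7)$ is nilpotent if and only if $\operatorname{tr}(B) = 0$ and $\det(B) = 0$. We impose these two conditions on the general element of $V_{\spc}$ and of $V_{\ns}$ from \cref{tab:groups}.

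For $V_{\spc}$, a general element is $B = \begin{psmallmatrix*}[r] a & b \\ c & a\end{psmallmatrix*}$. The condition $\operatorname{tr}(B) = 2a = 0$ forces $a = 0$, since $2$ is invertible modulo $7$. Then $\det(B) = -bc = 0$ forces $b = 0$ or $c = 0$. So every nilpotent element is a multiple of $\begin{psmallmatrix*}[r] 0 & 1 \\ 0 & 0\end{psmallmatrix*}$ or of $\begin{psmallmatrix*}[r] 0 & 0 \\ 1 & 0\end{psmallmatrix*}$. Conversely, both of these matrices square to zero and lie in $V_{\spc}$, which proves the first claim.

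For $V_{\ns}$, a general element is $B = \begin{psmallmatrix*}[r] a & \varepsilon b \\ -b & d\end{psmallmatrix*}$. The trace condition gives $d = -a$. The determinant is then $-a^2 + \varepsilon b^2$, and we need it to vanish, that is, $a^2 = \varepsilon b^2$. If $b \neq 0$, this says $\varepsilon = (a/b)^2$ is a square modulo $7$. That is impossible, because $\varepsilon = 3$ was chosen to be a quadratic non-residue (the squares modulo $7$ are $1, 2, 4$). So $b = 0$, hence $a = 0$ and $d = 0$, and therefore $B = 0$.

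There is no real obstacle here. The only point to watch is that $V_{\ns}$ carries the entry $-b$ rather than $b$ below the diagonal. It is exactly this sign that makes the determinant $-a^2 + \varepsilon b^2$ an anisotropic form, rather than $-a^2 - \varepsilon b^2$. The latter would be isotropic, since $-3 \equiv 4 = 2^2$ is a square modulo $7$, and nonzero nilpotents would then exist.

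As a sanity check, this matches \cref{lem:groups}\eqref{item:V}: the trace-zero part of $V_{\ns}$ is the orthogonal complement of $\mathfrak{c}_{\ns}$. On that complement, the determinant is a non-split quadratic form, because it is a scaled copy of the norm form of $\F_{49}$.
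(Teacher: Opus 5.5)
Your proposal is correct and follows the paper's proof: you impose $\operatorname{tr} = \det = 0$ on the general element of each $V_{\bullet}$, which gives $a = 0,\ bc = 0$ in the split case, and in the non-split case gives $d = -a,\ a^2 = \varepsilon b^2$, forcing $b = a = d = 0$ because $\varepsilon$ is a non-residue. Your side remarks on the sign $-b$ and on the determinant being $-1$ times the norm form of $\F_{49}$ on the orthogonal complement of $\mathfrak{c}_{\ns}$ are also correct.
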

\begin{proof}
  A matrix $A \in M_2(\F_7)$ is nilpotent if and only if $\operatorname{tr}(A) =
  \det(A) = 0$. For $A = \begin{psmallmatrix*}[r] a & b \\ c & a\end{psmallmatrix*}
  \in V_{\spc}$ this says $2a = 0$ and $a^2 - bc = 0$, that is, $a = 0$ and $bc
  = 0$. For $A = \begin{psmallmatrix*}[r] a & \varepsilon b \\ -b &
  d\end{psmallmatrix*} \in V_{\ns}$ it says $d = -a$ and $-a^2 + \varepsilon b^2
  = 0$. Since $\varepsilon$ is not a square modulo $7$, the second equation
  forces $b = 0$; hence $a = 0$ and $d = 0$.
\end{proof}

\subsection{Primes of potentially multiplicative reduction}
The next proposition is the input from the arithmetic of elliptic curves. In the
non-split case it is a special case of \cite{Furio--Lombardo26}*{Proposition 2.10}, which
generalizes \cite{Furio-24}*{Proposition 5.13}; the proof below follows theirs.
In the split case we need to argue differently; see \cref{rem:FL-split}. The
following lemma isolates the modular input of the split case: where a point of
$X_{\spc}^{+}(7)$ with potentially multiplicative reduction reduces. Recall
that $C_{\spc}^{+}(7)$ is the stabilizer of the unordered pair of coordinate
lines of $\F_7^{2}$ and contains $-I$. So a point of $X_{\spc}^{+}(7)$ over a
field $k$ of characteristic $0$, with $j \neq 0, 1728$, is given by a pair
$(E,\{L_1,L_2\})$, where $E$ is an elliptic curve over $k$ and $\{L_1,L_2\}$ is
a $\Gal_k$-stable unordered pair of distinct lines in $E[7](\bar k)$ (a
\cdef{level structure} on $E$), up to isomorphism and quadratic twist \cite{RSZB-22}*{Section 2.3}. This is the
moduli interpretation of the curve $X_{\mathrm{split}}(7)$ of Mazur
\cite{Mazur-77}, which is our $X_{\spc}^{+}(7)$; similarly, Mazur's
$X_{\mathrm{nonsplit}}(7)$ is our $X_{\ns}^{+}(7)$.

\begin{lemma}
  \label{lem:cusp-reduction}
  Let $p \neq 7$ be a prime, and let $E$ be an elliptic curve over $\Q_p$ with
  potentially multiplicative reduction, so that $E$ is a quadratic twist of a
  Tate curve $E_q$ and $E[7](\Qbar_p) = E_q[7](\Qbar_p)$ contains the line
  $\mu_7(\Qbar_p)$. Let $\{L_1,L_2\}$ be a $\Gal_{\Q_p}$-stable pair of distinct
  lines in $E[7](\Qbar_p)$, and let $t \in X_{\spc}^{+}(7)(\Q_p)$ be the point
  defined by $(E,\{L_1,L_2\})$. If $\mu_7(\Qbar_p) \in \{L_1,L_2\}$, then $t$
  reduces modulo $p$ to the rational cusp $t = \infty$: writing $t = (x:y)$
  with $x, y \in \Z_p$ not both in $p\Z_p$, we have $p \mid y$.
\end{lemma}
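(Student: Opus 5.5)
Since $E$ is a quadratic twist of $E_q$ with $v_p(q)>0$, we have $v_p(j(E)) = -v_p(q) < 0$. The point $t$ has $j_{\spc}(t) = j(E)$, so $t$ reduces mod $p$ to a pole of $j_{\spc}$, that is, to a cusp of $X_{\spc}^{+}(7)$. The homogenized form $j_{\spc} = \x H_{\spc}^{3}/(\y F_{\spc})^{7}$ makes this explicit. By \cref{lem:arithmetic-of-forms}\eqref{item:gcd}, $\gcd(xH_{\spc}(x,y), yF_{\spc}(x,y))$ divides $7^7$, which is a unit at $p$. Hence $v_p(j) < 0$ forces $p \mid yF_{\spc}(x,y)$. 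So $t$ reduces either to $\infty$ (when $p \mid y$) or to a root of $F_{\spc}$ modulo $p$; the latter are the three conjugate cusps defined over $K$. The task is to exclude the second case using the hypothesis $\mu_7 \in \{L_1,L_2\}$.

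I would do this with the moduli interpretation near the cusps, via the Tate curve. Over $\Qbar_p$, and after passing to $\Q_p^{\mathrm{ur}}(q^{1/7})$, the curve $E_q$ with level structure $\{L_1,L_2\}$ is identified with a point of the formal neighbourhood of some cusp of $X_{\spc}^{+}(7)$. The cusps of $X_{\spc}^{+}(7)$ correspond to the orbits of $C_{\spc}^{+}(7)\cdot\{\pm 1\}$ acting on the cusps of $X(7)$. Concretely, a pair of lines in $\mu_7 \times q^{\Z/7}$ falls into one of two types. Either $\mu_7$ is one of the lines, in which case the other line is $\langle \zeta q^{1/7}\rangle$. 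Or neither line is $\mu_7$, in which case both lines are of the form $\langle \zeta_i q^{1/7}\rangle$. The number of unordered pairs of each type matches the ramification: the cusp at $\infty$ has width $7$ and $\mu_7$-type stabilizer, and the three conjugate cusps account for the other type. I would then check which cusp is rational. Only pairs containing the Galois-stable line $\mu_7$ can give a cusp fixed by the full $\Gal_{\Q}$-action on cusps, namely by $\Gal(\Q(\zeta_7)/\Q)$ acting through the cyclotomic character on $q^{1/7}$. So pairs containing $\mu_7$ correspond to the unique rational cusp $t=\infty$, and pairs of the other type correspond to the three cusps that are the roots of $f_{\spc}$, defined over $K$.

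To make this an honest statement about reduction, I would use that the Tate curve over $\Z_p[[q]]$, or the Deligne--Rapoport model, identifies the formal neighbourhood of each cusp of $X(7)$ over $\Z[\zeta_7,1/7]$ with level structures on $\mathrm{Tate}(q)$. Since $p \neq 7$, $X_{\spc}^{+}(7)$ has good reduction at $p$ and its cusps stay distinct mod $p$. The point $t$ then reduces to the cusp whose formal neighbourhood contains it, which is the one labelled by the type of $\{L_1,L_2\}$. Quadratic twisting does not change the point of $X_{\spc}^{+}(7)$, since $-I \in C^+$.

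The main obstacle is the identification of which cusp, in Zywina's coordinate, corresponds to the $\mu_7$-containing type. I would settle it by the rationality argument above. Alternatively, I would use a $q$-expansion check: the degree-$7$ pole of $j_{\spc}$ at $t=\infty$ corresponds to the cusp where $t$ is a local parameter in $q$ itself (width-$7$ direction), as opposed to $q^{1/7}$-type parameters. This would be cross-checked against Mazur's description of the cusps of $X_{\mathrm{split}}(7)$.
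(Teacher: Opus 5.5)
Your outline on the moduli side matches the paper's. Since $v_p(j)<0$, $t$ reduces to a cusp. The cusps are the orbits of $\pm\begin{psmallmatrix}1&\ast\\0&1\end{psmallmatrix}$ on the $28$ pairs of lines in the Tate basis $(\zeta_7,q^{1/7})$, and the seven pairs containing $\mu_7$ form one orbit. Galois-stability of $\mu_7$ makes that cusp rational, so it is $t=\infty$. The Tate curve describes the formal neighbourhoods of the cusps, and $-I\in C_{\spc}^{+}(7)$ lets you replace $E$ by $E_q$.

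What is missing is the link between this and the coordinate $\t$. The Tate-curve description of formal neighbourhoods holds on the Deligne--Rapoport/Katz--Mazur model $\mathcal{X}$ of $X_{\spc}^{+}(7)$ over $\Z_p$. So it only tells you to which cusp of the special fibre of $\mathcal{X}$ the point $t$ specializes. The lemma asserts $p\mid y$, which is a statement about the model $\Pone_{\Z_p}$ in Zywina's coordinate. Reduction of $\Q_p$-points of $\Pone_{\Q_p}$ depends on the integral model: two coordinates differing by an element of $\PGL_2(\Q_p)$ outside $\PGL_2(\Z_p)$ have different reduction maps. Your first paragraph works in $\t$ and the later ones in $\mathcal{X}$. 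The sentence ``$X_{\spc}^{+}(7)$ has good reduction at $p$ and its cusps stay distinct mod $p$'' does not connect them, and your rationality argument identifies the cusps only on the generic fibre.

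The paper supplies this comparison as a separate step, and you need it too. $\mathcal{X}$ is smooth and proper over $\Z_p$ with genus-$0$ fibres and a section (the rational cusp), so $\mathcal{X}\cong\Pone_{\Z_p}$ in some coordinate $\t'$. After a change in $\PGL_2(\Z_p)$ the rational cusp is at $\t'=\infty$. The cuspidal subscheme is finite \'etale, so the four cusps reduce to distinct points in $\t'$. Then $\t'=a\t+b$. The cusps $a\theta_i+b$ are $p$-integral with unit differences, since their reductions in $\t'$ are distinct and different from $\infty$. The $\theta_i-\theta_j$ are units because $f_{\spc}$ is monic with discriminant $7^2$. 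Hence $a\in\Z_p^{\times}$ and $b\in\Z_p$, and reduction in $\mathcal{X}$ is reduction in $\t$.

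Alternatively, the resultant of $\x H_{\spc}^{3}$ and $(\y F_{\spc})^{7}$ is a power of $7$ up to sign. So $j_{\spc}$ is a finite morphism $\Pone_{\Z_p}\to\Pone_{\Z_p}$, and $\Pone_{\Z_p}$ in $\t$ is the normalization of the $j$-line, i.e.\ $\mathcal{X}$.

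Three smaller points:
\begin{enumerate}
\item Your fallback $q$-expansion check cannot work. $j_{\spc}$ has a pole of order $7$ at all four cusps, so every cusp has width $7$ and a local parameter of $q^{1/7}$ type; width does not single out $t=\infty$.
\item $\Gal_{\Q}$ acts on the cusps by $\begin{psmallmatrix}a&0\\0&1\end{psmallmatrix}$, i.e.\ on $\zeta_7$, not on $q^{1/7}$. Your rationality argument survives anyway, because this action preserves $\mu_7$ and $t=\infty$ is the only rational cusp ($f_{\spc}$ is irreducible).
\item Reading off the orbit of $\{L_1,L_2\}$ from the formal neighbourhood uses that a point near a cusp determines $(E_{q'},\{L_1',L_2'\})$ up to $\pm 1$. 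This holds because $q'$ is determined by $j$ and $j\neq 0,1728$.
\end{enumerate}
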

\begin{proof}
  We first describe the cusps. They correspond to the orbits of the group of
  matrices $\pm\begin{psmallmatrix*}[r] 1 & \ast \\ 0 & 1\end{psmallmatrix*}$
  on the $28$ unordered pairs $\{L_1,L_2\}$ of distinct lines in $\F_7^2$, in
  a basis whose first vector spans the subgroup scheme $\mu_7$ of the Tate
  curve $E_q$ over $\Z[1/7]((q))$
  \cite{DR-73}*{VI, 5.1 and Construction 5.3, and VII, 1.13--1.16},
  \cite{KM-85}*{Corollary 10.2.5}. Indeed, the $7$-torsion of the Tate curve
  has the basis $(\zeta_7, q^{1/7})$ over $\Z[1/7][\zeta_7]((q^{1/7}))$. The
  automorphism of this ring over $\Z[1/7][\zeta_7]((q))$ that fixes $\zeta_7$
  and sends $q^{1/7}$ to $\zeta_7 q^{1/7}$ acts on this basis by
  $\begin{psmallmatrix*}[r] 1 & 1 \\ 0 & 1\end{psmallmatrix*}$, which generates
  the unipotent matrices, and $-I$ comes from the automorphism $-1$ of $E_q$.
  The seven pairs containing the line $\mu_7$
  form one orbit, and the other $21$ pairs form three. The group $\Gal_{\Q}$ acts
  on the cusps through the cyclotomic character, by the matrices
  $\begin{psmallmatrix*}[r] a & 0 \\ 0 & 1\end{psmallmatrix*}$ with
  $a \in \F_7^{\times}$ \cite{DR-73}*{VI, 5.2--5.3}. These fix the first orbit
  and permute the other three transitively, so the first orbit is the unique
  rational cusp. In Zywina's coordinate $\t$ this cusp is $t = \infty$, and the
  other three are the roots of $f_{\spc}$, the remaining poles of $j_{\spc}$ in
  \cref{eq:jsp}.

  Next, the model. Over $\Z[1/7]$ the coarse moduli scheme of
  $X_{\spc}^{+}(7)$ is smooth \cite{DR-73}*{VI, Proposition 6.7},
  \cite{KM-85}*{Theorem 10.9.7}, and proper, being finite over the $j$-line;
  its cusps form a finite \'etale subscheme \cite{KM-85}*{Theorem 10.9.4(3)}.
  Let $\mathcal{X}$ be its base change to $\Z_p$. By properness the rational
  cusp extends to a section of $\mathcal{X}$, and a smooth proper curve over
  $\Z_p$ with fibers of genus $0$ and a section is isomorphic to
  $\Pone_{\Z_p}$. So $\mathcal{X} = \Pone_{\Z_p}$ in some coordinate $\t'$, and
  the cusps have pairwise distinct reductions in $\t'$. They also have pairwise
  distinct reductions in Zywina's coordinate $\t$: the roots of the monic cubic
  $f_{\spc}$ are $p$-integral, so they do not reduce to $\infty$, and their
  reductions are distinct because $\disc(f_{\spc}) = 7^2$. We show that two
  such coordinates differ by an element of $\PGL_2(\Z_p)$. The rational cusp is
  a $\Z_p$-point in $\t'$, and $\PGL_2(\Z_p)$ acts transitively on
  $\Pone(\Z_p)$, so after composing $\t'$ with an element of $\PGL_2(\Z_p)$ we
  may assume that the rational cusp is $\t' = \infty$. Then $\t' = a\t + b$
  with $a \in \Q_p^{\times}$ and $b \in \Q_p$, and the other three cusps are
  $a\theta_i + b$, for the roots $\theta_i$ of $f_{\spc}$. These are
  $p$-integral, since they do not reduce to $\infty$, and their differences
  $a(\theta_i - \theta_j)$ are units, since their reductions are distinct. Since
  $\theta_i - \theta_j$ is a unit, so is $a$, and $b = (a\theta_1 + b) -
  a\theta_1$ is integral. Hence $\mathcal{X} = \Pone_{\Z_p}$ in the coordinate
  $\t$ as well, and reduction modulo $p$ of $X_{\spc}^{+}(7)(\Q_p)$ is
  computed in $\t$.

  Finally, the reduction of $t$. Since $-I \in C_{\spc}^{+}(7)$, the point $t$
  depends only on the pair of subgroups $\{L_1,L_2\}$ of $E[7]$ and on $E$ up
  to quadratic twist; so we may assume that $E = E_q$, and twisting does not
  move the line $\mu_7$. Since $v_p(j(E)) < 0$, the point $t$ reduces to a cusp
  $\mathfrak{c}$ of the special fiber of $\mathcal{X}$. The formal completion of
  $\mathcal{X}$ along $\mathfrak{c}$ is described by the Tate curve: the points
  of $X_{\spc}^{+}(7)(\Q_p)$ that reduce to $\mathfrak{c}$ are the points
  $(E_{q'},\{L_1',L_2'\})$ with $v_p(q') > 0$ and $\{L_1',L_2'\}$ in the orbit
  $\mathfrak{c}$, the lines being read in the basis $(\zeta_7, q'^{1/7})$ of
  $E_{q'}[7]$ \cite{DR-73}*{VII, Corollaire 2.2}, \cite{KM-85}*{Theorem
    8.11.10}. This representation of a point is unique up to the automorphism
  $-1$: the point determines $q'$, since $j(E_{q'})$ is its $j$-invariant, and it
  determines the pair of lines up to $\Aut(E_{q'}) = \{\pm 1\}$, since $j \neq 0,
  1728$; and $-1$ fixes every line. Applied to $t = (E_q, \{L_1,L_2\})$, this
  says that $\{L_1,L_2\}$ lies in the orbit $\mathfrak{c}$; so $\mathfrak{c}$ is
  the rational cusp if and only if $\mu_7(\Qbar_p) \in \{L_1,L_2\}$.
\end{proof}

\begin{proposition}
  \label{prop:multiplicative-primes}
  Let $\bullet \in \{\ns,\spc\}$, and let $E$ be an elliptic curve over $\Q$ with a
  basis $(P_1, P_2)$ of $E[49](\Qbar)$ with respect to which
  $\rho_{E,49}(\Gal_{\Q}) \subseteq G_{\bullet}^{\sharp}(49)$. The basis
  $(7P_1, 7P_2)$ of $E[7](\Qbar)$, with respect to which
  $\rho_{E,7}(\Gal_{\Q}) \subseteq C_{\bullet}^{+}(7)$, defines a rational point
  $t$ of $X_{\bullet}^{+}(7)$ with $j_{\bullet}(t) = j(E)$. In the split case,
  $t$ is the point $(E, \{L_1,L_2\})$, where $L_1, L_2 \subset E[7](\Qbar)$ are
  the lines spanned by $7P_1$ and $7P_2$; the pair $\{L_1,L_2\}$ is
  $\Gal_{\Q}$-stable. Write $t = (x:y)$ with $x,y$ coprime integers. Then
  $v_p(F_{\bullet}(x,y)) \equiv 0 \pmod 7$ for every prime $p \notin \{2,7\}$. (In the split
  case this also holds for $p = 2$, since $F_{\spc}(x,y)$ is odd by
  \cref{lem:arithmetic-of-forms}\eqref{item:v2}.)
\end{proposition}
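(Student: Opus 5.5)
The plan has three steps: locate the primes where $p \mid F(x,y)$, use the Tate curve to make inertia act by a unipotent matrix, and then use the shape of $G^{\sharp}\cap N$ to force that unipotent to be trivial modulo $49$.

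\textbf{Step 1: where $p$ divides $F(x,y)$.} Fix $p\notin\{2,7\}$ with $p \mid F(x,y)$; if no such $p$ exists there is nothing to prove. By \cref{lem:arithmetic-of-forms}\eqref{item:gcd}, $p$ divides neither $H(x,y)$ nor $y$ (resp.\ neither $xH_{\spc}$). So
\[
v_p(j(E)) = -7\,v_p(F(x,y)) < 0 .
\]
Hence $E$ has potentially multiplicative reduction at $p$. Write $e \colonequals v_p(F(x,y))$; then $v_p(j)=-7e$, and the goal is $7\mid e$. Over $\Q_p$, $E$ is a quadratic twist of a Tate curve $E_q$ with $v_p(q)=7e$. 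Since $-I\in G^{\sharp}$ by \cref{lem:groups}\eqref{item:G-properties}, the twist changes $\rho_{E,49}$ only by a sign, so the image of inertia stays in $G^{\sharp}$ up to $\pm$. Passing to the index-$\le 2$ subgroup of inertia on which the twist character is trivial, inertia acts on $E[49]$ by the Tate parametrization. In the basis $(\zeta_{49}, q^{1/49})$ a generator of tame inertia acts by $\begin{psmallmatrix} 1 & v_p(q) \\ 0 & 1\end{psmallmatrix}$; here $p\neq 7$, so there is no wild contribution to the $49$-torsion. Since $v_p(q)=7e$, this is $u\colonequals I+7e\begin{psmallmatrix} 0&1\\0&0\end{psmallmatrix}$ in that basis, which lies in $N$.

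\textbf{Step 2: transport to the given basis.} In the basis $(P_1,P_2)$ the element becomes $gug^{-1}$ for some $g\in \GL_2(\Z/49)$. It lies in $G^{\sharp}\cap N = I+7V$, because it is a square of an element $\pm$ of $G^{\sharp}$, or we may simply use a generator of the subgroup. Therefore $7e\,\bar g A\bar g^{-1}\in 7V$ with $A=\begin{psmallmatrix}0&1\\0&0\end{psmallmatrix}$. If $7\nmid e$, then $V$ contains a nonzero nilpotent matrix.

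\textbf{Step 3: using \cref{lem:nilpotents}.} In the non-split case $V_{\ns}$ has no nonzero nilpotents, so this is a contradiction and $7\mid e$ follows at once. In the split case, the nonzero nilpotents of $V_{\spc}$ are multiples of the two elementary matrices. So $\bar g A \bar g^{-1}$ is upper or lower triangular nilpotent, which means $\bar g$ sends $\mu_7$, the kernel line of $A$, to one of the coordinate lines $L_1$ or $L_2$. Thus $\mu_7(\Qbar_p)\in\{L_1,L_2\}$. By \cref{lem:cusp-reduction}, applied to $(E,\{L_1,L_2\})$ over $\Q_p$, we get $p\mid y$. This contradicts $\gcd(y,F(x,y))=1$ from \cref{lem:arithmetic-of-forms}\eqref{item:gcd}. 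Hence $7\mid e$ in both cases.

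\textbf{Expected obstacle and bookkeeping.} The main care goes into the Tate-curve normalization: checking that inertia acts by exactly $\begin{psmallmatrix}1&v_p(q)\\0&1\end{psmallmatrix}$ (up to sign, because of the twist) on $E[49]$, that the first basis vector spans $\mu_7$ modulo $7$, and that restricting to the subgroup of inertia killing the twist character does not lose the element. For the last point, replacing $u$ by $u^2$ multiplies $e$ by $2$, which is harmless modulo $7$. The claims about the point $t$ itself—that $(7P_1,7P_2)$ gives a rational point of $X^+(7)$ with $j_\bullet(t)=j(E)$ and that $\{L_1,L_2\}$ is Galois-stable—follow directly from $\rho_{E,7}(\Gal_\Q)\subseteq C^+$ and the moduli interpretation recalled before \cref{lem:cusp-reduction}.
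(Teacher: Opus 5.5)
Your proposal is correct and follows the paper's proof essentially step for step. The chain is the same: $v_p(j(E))=-7e$ gives a twisted Tate curve with $v_p(q)=7e$; a tame inertia element (squared, or taken in $\ker\chi$) acts unipotently and lands in $G^{\sharp}\cap N=I+7V$ after the change of basis; \cref{lem:nilpotents} disposes of the non-split case; and in the split case \cref{lem:cusp-reduction} forces $p\mid y$, contradicting $\gcd(y,F_{\spc}(x,y))=1$, with the paper removing the twist character by squaring exactly as in your bookkeeping remark.
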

\begin{proof}
  Fix such a prime $p$ and suppose that
  \[
    m \colonequals v_p(F_{\bullet}(x,y)) > 0 .
  \]
  We show that $7 \mid m$.

  \emph{Step 1: $E$ has potentially multiplicative reduction at $p$, with
    Tate parameter $q$ satisfying $v_p(q) = 7m$.} By
  \cref{lem:arithmetic-of-forms}\eqref{item:gcd}, $p \nmid H_{\ns}(x,y)$ in
  the non-split case, and $p \nmid xyH_{\spc}(x,y)$ in the split case. The
  formulas $j_{\ns} = H_{\ns}^{3}/F_{\ns}^{7}$ and
  $j_{\spc} = \x H_{\spc}^{3}/(\y F_{\spc})^{7}$ then give
  \[
    v_p(j(E)) = -7m < 0,
  \]
  so $E$ has potentially multiplicative reduction at $p$. By Tate's theory
  \cite{Silverman-94}*{Chapter V, Lemma 5.1 and Theorem 5.3}, there is a
  unique $q \in \Q_p^{\times}$ with $v_p(q) > 0$ and $j(E_q) = j(E)$, and a
  character $\chi \colon \Gal_{\Q_p} \to \{\pm 1\}$ such that $E_{\Q_p}$ is the
  quadratic twist of the Tate curve $E_q$ by $\chi$. Since
  $j(E_q) = q^{-1} + 744 + 196884\,q + \cdots$,
  \[
    v_p(q) = -v_p(j(E)) = 7m .
  \]

  \emph{Step 2: the image of inertia contains the matrix
    $\begin{psmallmatrix*} 1 & 14m \\ 0 & 1\end{psmallmatrix*}$, in a basis of
    $E[49](\Qbar_p)$ whose first vector spans $\mu_{49}(\Qbar_p)$.} Tate's
  uniformization $E_q(\Qbar_p) \cong \Qbar_p^{\times}/q^{\Z}$ is
  $\Gal_{\Q_p}$-equivariant, so the classes of $\zeta_{49}$ and $q^{1/49}$ form a
  basis of $E_q[49](\Qbar_p)$. Write $q = p^{7m}u$ with $u \in \Z_p^{\times}$, and let
  $c \colon I_p \to \Z/49$ be the surjective character of the inertia group
  defined by $\tau(p^{1/49}) = \zeta_{49}^{c(\tau)}p^{1/49}$. Since $p \neq 7$, both
  $\zeta_{49}$ and $u^{1/49}$ lie in an unramified extension of $\Q_p$, so for
  $\tau \in I_p$, we have that $\tau(\zeta_{49}) = \zeta_{49}$ and
  $\tau(q^{1/49}) = \zeta_{49}^{7m\,c(\tau)}\,q^{1/49}$. That is,
  \[
    \rho_{E,49}(\tau) = \chi(\tau)\begin{pmatrix*} 1 & 7m c(\tau) \\ 0 & 1 \end{pmatrix*}
  \]
  in this basis. Choose $\tau$ with $c(\tau) = 1$. Since $\chi(\tau)^2 = 1$,
  \[
    \rho_{E,49}(\tau^2) = \begin{pmatrix*} 1 & 14m \\ 0 & 1 \end{pmatrix*}
    = I + 14m N_0,
    \qquad
    N_0 \colonequals \begin{pmatrix*}[r] 0 & 1 \\ 0 & 0 \end{pmatrix*} .
  \]

  \emph{Step 3: $m\bar N \in V$ for a nonzero nilpotent
    $\bar N \in M_2(\F_7)$ whose image is the line $\mu_7(\Qbar_p)$.} Identify
  $E[49](\Qbar)$ with $E[49](\Qbar_p)$ through a fixed embedding of $\Qbar$
  into $\Qbar_p$, and let $B \in \GL_2(\Z/49)$ be the matrix of the
  change from the basis of Step~2 to the basis of the statement, in which
  $\rho_{E,49}(\Gal_{\Q}) \subseteq G^{\sharp}$. In the latter
  basis
  \[
    \rho_{E,49}(\tau^2) = B(I + 14mN_0)B^{-1} = I + 7\cdot 2m\bar N,
    \qquad
    \bar N \colonequals BN_0B^{-1} \bmod 7 .
  \]
  The matrix $\bar N$ is nonzero and nilpotent, and its image is spanned by the
  class of $\zeta_7$, i.e.,~it is the line $\mu_7(\Qbar_p) \subset
  E[7](\Qbar_p)$. Since $\rho_{E,49}(\tau^2)$ lies in the kernel of reduction of
  $G^{\sharp}$, which is $I + 7V$, we get
  $2m\bar N \in V$; hence $m\bar N \in V$.

  \emph{Step 4: if $7 \nmid m$, then $\bullet = \spc$ and $\mu_7(\Qbar_p)$ is one
    of the two lines of the level structure of $t = x/y$.} If $7 \nmid m$, then
  $\bar N \in V$ is a nonzero nilpotent matrix. By
  \cref{lem:nilpotents} this is impossible when $\bullet = \ns$. When $\bullet =
  \spc$, the same lemma shows that $\bar N$ is a multiple of
  $\begin{psmallmatrix*}[r] 0 & 1 \\ 0 & 0\end{psmallmatrix*}$ or of
  $\begin{psmallmatrix*}[r] 0 & 0 \\ 1 & 0\end{psmallmatrix*}$, so its image
  $\mu_7(\Qbar_p)$ is one of the two coordinate lines $L_1, L_2 \subset
  E[7](\Qbar)$, the two lines of the level structure defining $t = x/y$.

  \emph{Step 5: in the situation of Step~4, $p \mid y$; this is a contradiction.}
  By Step~4 and \cref{lem:cusp-reduction}, applied to $E_{\Q_p}$ and the pair
  $\{L_1,L_2\}$, the point $t = x/y$ reduces modulo $p$ to the rational cusp,
  that is, $p \mid y$. But $p \mid F_{\spc}(x,y)$ and $\gcd(y, F_{\spc}(x,y)) = 1$
  by \cref{lem:arithmetic-of-forms}\eqref{item:gcd}. Hence $7 \mid m$.
\end{proof}

\begin{remark}
  \label{rem:FL-split}
  We explain why the split case is treated differently. The proof of
  \cite{Furio--Lombardo26}*{Proposition 2.10} is local: for an elliptic curve over a number
  field $L$ and a prime $\mathfrak{p}$ of $L$ of potentially multiplicative
  reduction, it first adjoins $\zeta_7$ to $L$ and then considers the image
  of the whole local Galois group of the completion $L(\zeta_7)_{\mathfrak{P}}$
  at a prime $\mathfrak{P}$ above $\mathfrak{p}$. In the split
  case it uses an element $\tau$ of that group with $\chi_{49}(\tau) = 1 + 7$,
  where $\chi_{49}$ is the mod-$49$ cyclotomic character, whose existence is
  attributed there to the tameness of $7$ in $L$. Such an element exists if
  and only if $\zeta_{49} \notin L(\zeta_7)_{\mathfrak{P}}$, a condition on
  $\mathfrak{p}$ rather than on the primes above $7$. For $L = \Q$ it fails
  exactly at the primes $p$ with $p^{6} \equiv 1 \pmod{49}$, among them the
  primes $p \equiv \pm 1 \pmod{49}$, and such primes can divide
  $F_{\spc}(x,y)$.
  \Cref{prop:multiplicative-primes} recovers the congruence
  $v_p(F_{\spc}(x,y)) \equiv 0 \pmod 7$ at every prime $p \neq 7$ using only
  inertia, together with the position of the rational cusp
  (\cref{lem:cusp-reduction}). The same analysis shows that the rational cusp
  $t = \infty$ of $X_{\spc}^{+}(7)$ is \emph{unramified} in
  $X_{\spc}^{\sharp}(49) \to X_{\spc}^{+}(7)$, while the other three cusps are
  totally ramified: the cusp widths of $X_{\spc}^{\sharp}(49)$ are $7$ (seven
  times) and $49$ (three times), and those of $X_{\ns}^{\sharp}(49)$ are $49$
  (three times). Accordingly, the further assertion of the split case of
  \cite{Furio--Lombardo26}*{Proposition 3.1}, that $y$ is a seventh power, is established
  by their argument only at the primes $p \mid y$ with $p^{6} \not\equiv 1
  \pmod{49}$. Neither \cite{Furio--Lombardo26} nor the present paper uses that assertion.
\end{remark}

\begin{proposition}[\cite{Furio--Lombardo26}*{Proposition 3.1}]
  \label{prop:FL}
  Let $E$ be an elliptic curve over $\Q$ without complex multiplication.
  \begin{enumerate}
  \item If $\rho_{E,49}(\Gal_{\Q})$ is conjugate to a subgroup of
    $G_{\ns}^{\sharp}(49)$, there are coprime integers $x,y$ and an integer $w$
    with $j(E) = j_{\ns}(x/y)$ and $F_{\ns}(x,y) = kw^7$, where $k \in \{7,56\}$.
  \item If $\rho_{E,49}(\Gal_{\Q})$ is conjugate to a subgroup of
    $G_{\spc}^{\sharp}(49)$, there are coprime integers $x,y$ and an integer $w$
    with $j(E) = j_{\spc}(x/y)$ and $F_{\spc}(x,y) = kw^7$, where $k \in \{1,7\}$.
  \end{enumerate}
\end{proposition}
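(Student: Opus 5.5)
Starting from \cref{prop:multiplicative-primes}: given $E$ whose mod-$49$ image is conjugate into $G_{\bullet}^{\sharp}(49)$, it yields a rational point $t = (x:y)$ of $X_{\bullet}^{+}(7)$, with $x,y$ coprime integers, such that $j_{\bullet}(t) = j(E)$ and $v_p(F_{\bullet}(x,y)) \equiv 0 \pmod 7$ for every prime $p \notin \{2,7\}$ (and also for $p=2$ in the split case). Since $E$ is non-CM, $j(E)$ is finite, so $t$ is not a cusp. In the split case this means $y \neq 0$, because $t=\infty$ is the rational cusp; in both cases $F_{\bullet}(x,y) \neq 0$, since $f$ is irreducible. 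Changing the sign of $(x,y)$ only multiplies the cubic form by $-1$, and $-1 = (-1)^7$, so the sign can be absorbed into $w$.

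Factor $F_{\bullet}(x,y) = \pm\prod_p p^{v_p}$. The exponents at primes $p \neq 2,7$ are multiples of $7$, so these factors together give a seventh power $w_0^7$. It remains to handle $p = 7$ and $p = 2$ using \cref{lem:arithmetic-of-forms}. By part \eqref{item:v7}, $v_7(F(x,y)) \in \{0,1\}$, which contributes a factor $k_7 \in \{1,7\}$. By part \eqref{item:v2}, in the non-split case $v_2(F_{\ns}(x,y)) \in \{0,3\}$, contributing $1$ or $8$; in the split case $F_{\spc}(x,y)$ is odd. Hence $F(x,y) = k w^7$ with $k \in \{1,7,8,56\}$ in the non-split case and $k \in \{1,7\}$ in the split case.

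The main step, and the main obstacle, is to exclude $k \in \{1,8\}$ in the non-split case, that is, to force $7 \mid x$ (equivalently $7 \mid F_{\ns}(x,y)$ by part \eqref{item:v7-ns}). My plan is to use the cusp-width information from \cref{rem:FL-split}: every cusp of $X_{\ns}^{\sharp}(49)$ is totally ramified over $X_{\ns}^{+}(7)$. A cleaner route is to argue at $p = 7$ through the image of Galois. Since the mod-$49$ image lies in $G_{\ns}^{\sharp}$, its determinant is surjective and its intersection with $N$ is $I+7V$. I would apply the same style of argument as in \cref{prop:multiplicative-primes}: an element of inertia at $7$ whose cyclotomic character is $1+7$ must lie in $G_{\ns}^{\sharp}$, and one checks which $t$ are compatible with this. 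Failing that, I would fall back on the result cited as \cite{Furio--Lombardo26}*{Proposition 2.10} for the non-split case, which covers exactly this $7$-adic condition. Concretely, I expect that $7 \nmid x$ forces the reduction of $j$ modulo $7$ to be incompatible with a mod-$7$ image in the normalizer of non-split Cartan that lifts to $G_{\ns}^{\sharp}$. This step is where I am least certain, and I would verify it by direct computation with the $7$-adic valuation of $j_{\ns}(t)$ from \eqref{eq:jns} together with the structure of $V_{\ns}$.
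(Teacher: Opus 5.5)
Your reduction to $k \in \{1,7,8,56\}$ in the non-split case and $k \in \{1,7\}$ in the split case is correct and matches the paper's argument: \cref{prop:multiplicative-primes} away from $2$ and $7$, \cref{lem:arithmetic-of-forms} at $2$ and $7$, and $-1 = (-1)^7$. The gap is the step you flag yourself, forcing $7 \mid x$ in the non-split case. None of your suggestions supplies the needed input.
\begin{itemize}
\item The cusp widths in \cref{rem:FL-split} govern the primes of potentially multiplicative reduction, i.e.\ the primes dividing $F_{\ns}(x,y)$. If $7 \nmid x$, then $7 \nmid F_{\ns}(x,y)$ and $j_{\ns}(x/y)$ is a $7$-adic unit. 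So $E$ has potentially good reduction at $7$, and the cusps say nothing there.
\item \cite{Furio--Lombardo26}*{Proposition 2.10} is the statement about potentially multiplicative primes that underlies \cref{prop:multiplicative-primes}. It is not a $7$-adic statement.
\item An inertia element at $7$ with cyclotomic character $1+7$ certainly exists. But its lying in $G_{\ns}^{\sharp}$ constrains $t$ only once you know the shape of $\rho_{E,49}$ on inertia at $7$. That shape depends on the reduction type of $E$ at $7$, and this local analysis is absent from your proposal.
\end{itemize}
Level $7$ information alone cannot suffice: $t = 11/2$ and $t = 19/9$ are non-CM rational points of $X_{\ns}^{+}(7)$ with $7 \nmid x$.

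The missing ingredient, which the paper imports, is \cite{Furio--Lombardo26}*{Lemma 2.11}: if $\rho_{E,49}(\Gal_{\Q})$ lies in $G_{\ns}^{\sharp}(49)$, then $v_7(j(E)) > 0$. With it, the rest is the computation you anticipated. If $7 \nmid x$, then $F_{\ns}(x,y) \equiv x^{3}$ and $H_{\ns}(x,y) \equiv 20x^{7} \pmod 7$ are both prime to $7$, so $v_7(j(E)) = 0$, a contradiction. Hence $7 \mid x$, so $7 \mid F_{\ns}(x,y)$ by \cref{lem:arithmetic-of-forms}\eqref{item:v7-ns}, and $7 \mid k$.

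For non-CM $E$ you could instead argue as in \cref{rem:without-2.11}. If $k \in \{1,8\}$, then \cite{Furio--Lombardo26}*{Theorem 3.5(1)} gives $w = \pm 1$, so $t$ is one of Kenku's integral points. The non-CM ones with $7 \nmid x$ are $11/2$ and $19/9$, and both violate \cref{cor:criterion}.
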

\begin{proof}
  Let $t = x/y$ be the rational point of $X_{\bullet}^{+}(7)$ defined by $E$ and
  its level structure, as in \cref{prop:multiplicative-primes}; then $j(E) =
  j_{\bullet}(t)$, because $j_{\bullet}$ is the $j$-map of $X_{\bullet}^{+}(7)$ in
  Zywina's coordinate \cite{Zywina-15}*{Section 1.4}, \cite{Furio--Lombardo26}*{Theorem 2.1}. By
  \cref{prop:multiplicative-primes} and
  \cref{lem:arithmetic-of-forms}\eqref{item:v7}--\eqref{item:v2}, and since $-1$
  is a seventh power, $F_{\bullet}(x,y) = kw^7$ with $k \in \{1,7,8,56\}$ when
  $\bullet = \ns$ and $k \in \{1,7\}$ when $\bullet = \spc$. In the non-split case,
  \cite{Furio--Lombardo26}*{Lemma 2.11} shows that $v_7(j(E)) > 0$. If $7 \nmid x$ then $7
  \nmid F_{\ns}(x,y)$ and $H_{\ns}(x,y) \equiv 20x^{7} \not\equiv 0 \pmod 7$, so
  $v_7(j(E)) = 0$. Hence $7 \mid x$, and $7 \mid k$.
\end{proof}

\section{Kummer descent}
\label{sec:descent}

We follow Bruin \cite{Bruin02-thesis}*{Section 3.1}, \cite{Bruin06-nn2}*{Section 3}. Throughout this
section we fix $\bullet \in \{\ns,\spc\}$, write $f = f_{\bullet}$,
$F = F_{\bullet}$, $\theta = \theta_{\bullet}$, and fix one of the values of
$k$ allowed by \cref{tab:descent-data}. The table also records $\disc(f)$ and
the set $\calS$ of \cdef{bad primes}, those dividing $k\disc(f)$. Its last
two columns give generators of the group $\OKS^{\times}$ of $\calS$-units of
$K$, and the order of the Selmer group $K(7,\calS)$, which is defined below.

\begin{table}[ht]
  \centering
  \tablesetup
  \begin{tabular}{IcIc:c:c:c:c:cI}
    \toprule
    \headrow
    $\bullet$ & $f$ & $\disc(f)$ & $k$ & $\calS$ & $\OKS^{\times}$ & $\#K(7,\calS)$ \\
    \headrule
    $\ns$ & $\t^3 - 7\t^2 + 7\t + 7$ & $2^{6}7^{2}$ & $1,\ 7,\ 8,\ 56$ & $\{2,7\}$
      & $\pm\langle \varepsilon_1, \varepsilon_2, \varpi, 2\rangle$ & $7^{4}$ \\
    \rowsep
    $\spc$ & $\t^3 - 4\t^2 + 3\t + 1$ & $7^{2}$ & $1,\ 7$ & $\{7\}$
      & $\pm\langle \varepsilon_1, \varepsilon_2, \varpi\rangle$ & $7^{3}$\\
    \bottomrule
  \end{tabular}
  \caption{The data of the descent. In both rows $\varepsilon_1 \colonequals
    \theta_{\spc} - 1$ and $\varepsilon_2 \colonequals 2 - \theta_{\spc}$ are
    units, and $\varpi \colonequals 1 + \theta_{\spc}$ has norm $7$.}
  \label{tab:descent-data}
\end{table}

\subsection{The covering curves}
Since $K$ has class number one, we may define the \cdef{$(7,\calS)$-Selmer group} of
\cite{Bruin06-nn2}*{Section 3} as
\[
  K(7,\calS) \colonequals \OKS^{\times}/\OKS^{\times 7},
\]
of the order given in \cref{tab:descent-data}: by Dirichlet's unit theorem,
$\OKS^{\times}$ has rank $2 + \#\calS$, since $2$ is inert and $7$ is totally
ramified in $K$, and its torsion is $\pm 1$; generators are listed in
\cref{tab:descent-data}. We fix a set of representatives
in $\OKS^{\times}$ for $K(7,\calS)$, and we let $\Delta(\theta,k)$ be the subset of those
representatives $\delta$ for which
\[
  \Norm(\delta)/k \in \Q^{\times 7} ;
\]
this condition only depends on the class of $\delta$, since $\Norm(\nu^7) \in
\Q^{\times 7}$. The subset $\Delta(\theta,k)$ has $49$ elements for each of the six pairs
$(F,k)$. Let $\sfa_0,\sfa_1,\sfa_2$ be coordinates on $\PP^2_{\Q}$, and put
\begin{align*}
  \sfu_0 &\colonequals \sfa_0 + \sfa_1\theta + \sfa_2\theta^{2}, \\
  \sfu_1 &\colonequals \sfa_0 + \sfa_1\sigma(\theta) + \sfa_2\sigma(\theta)^{2}, \\
  \sfu_2 &\colonequals \sfa_0 + \sfa_1\sigma^{2}(\theta) + \sfa_2\sigma^{2}(\theta)^{2},
\end{align*}
in $K[\sfa_0,\sfa_1,\sfa_2]$, with the index $i$ of $\sfu_i$ read modulo $3$.
Letting $\sigma$ act on $K[\sfa_0,\sfa_1,\sfa_2]$ through the coefficients, we
have $\sigma(\sfu_i) = \sfu_{i+1}$; and for $\omega = a_0 + a_1\theta +
a_2\theta^2 \in K$ we have $\sfu_i(a_0,a_1,a_2) = \sigma^{i}(\omega)$. Thus
$\sfu_0$ is the \cdef{generic element} of $K$, and $\sfu_0\sfu_1\sfu_2$ is its norm; we
extend $\Tr$ and $\Norm$ to $K[\sfa_0,\sfa_1,\sfa_2]$ coefficientwise. For
$\delta \in K^{\times}$ the \cdef{coordinate forms}
$P^{(n)}_{\theta,\delta} \in \Q[\sfa_0,\sfa_1,\sfa_2]$ of degree $7$ are
defined by
\begin{equation}
  \label{eq:P-forms}
  \delta\,(\sfa_0+\sfa_1\theta + \sfa_2\theta^2)^{7}
  = P^{(0)}_{\theta,\delta} + P^{(1)}_{\theta,\delta}\,\theta + P^{(2)}_{\theta,\delta}\,\theta^{2},
\end{equation}
and we put
\begin{equation}
  \label{eq:C-and-phi}
  C_{\theta,\delta} \colonequals \bigl\{P^{(2)}_{\theta,\delta} = 0\bigr\} \subset \PP^2_{\Q} .
\end{equation}
Writing $\mathbf{a} = (a_0:a_1:a_2)$ for a point of $C_{\theta,\delta}$, we
define
\begin{equation}
  \label{eq:phi}
  \phi_{\theta,\delta} \colon C_{\theta,\delta} \to \Pone_{\Q},
  \qquad \mathbf{a} \mapsto
  \bigl(P^{(0)}_{\theta,\delta}(\mathbf{a}) : -P^{(1)}_{\theta,\delta}(\mathbf{a})\bigr).
\end{equation}
We identify $\PP^2(\Q)$ with $K^{\times}/\Q^{\times}$ through the bijection
\begin{equation}
  \label{eq:P2=K*/Q*}
  \PP^2(\Q) \to K^{\times}/\Q^{\times}, \qquad \mathbf{a} = (a_0:a_1:a_2) \mapsto [a_0 + a_1\theta + a_2\theta^2] .
\end{equation}

\begin{remark}[Intrinsic description]
  \label{rem:intrinsic}
  Write $\calV_K$ for $K$ regarded as a $\Q$-vector space. For a
  finite-dimensional $\Q$-vector space $W$, let
  $\PP(W) \colonequals \Proj \Sym(W^{\vee})$ be the projective space of lines
  in $W$, so that $\PP(W)(\Q) = (W \setminus \{0\})/\Q^{\times}$; in
  particular $\PP(\calV_K)(\Q) = K^{\times}/\Q^{\times}$. The coordinates
  $\sfa_0, \sfa_1, \sfa_2$ form the basis of $\calV_K^{\vee}$ dual to
  $1, \theta, \theta^2$, so $\Sym(\calV_K^{\vee}) = \Q[\sfa_0,\sfa_1,\sfa_2]$,
  and the bijection \cref{eq:P2=K*/Q*} is the resulting isomorphism
  $\PP^2_{\Q} \cong \PP(\calV_K)$. In these terms the objects of
  the descent do not depend on coordinates:
  \[
    C_{\theta,\delta} = \bigl\{[\omega] \in \PP(\calV_K) : \delta\omega^{7} \in \Q + \Q\theta\bigr\},
    \qquad
    \phi_{\theta,\delta}([\omega]) = [\delta\omega^{7}] \in \PP(\Q + \Q\theta),
  \]
  and $\PP(\Q + \Q\theta)$ is identified with $\Pone_{\t}$ by $x - \theta y
  \mapsto (x : y)$ (\cref{lem:sign} below). Since $\Q + \Q\theta_{\ns} = \Q +
  \Q\theta_{\spc}$, the curves are the same for both roots, and only the
  coordinate on the target changes; see \cref{lem:one-family}. Likewise, in
  \cref{sec:klein} the quotient map and its target will be
  $\psi([\omega]) = [\omega^{3}\sigma^{-1}(\omega)]$ and $Z_{\theta,\delta} =
  \{[v] \in \PP(\calV_K) : \Tr(\eta\, v^{3}\sigma(v)) = 0\}$, with $\eta =
  \delta/f'(\theta)$. The coordinate forms $P^{(n)}_{\theta,\delta}$ of
  \cref{eq:P-forms} and $Q^{(n)}_{\theta,\delta}$ of \cref{eq:Q-forms} are the
  equations of these objects in the basis
  $1, \theta, \theta^2$; we keep them because they are what the computations
  use.
\end{remark}

Finally, let $\mcU$ be the complement of $\{\x = \y = 0\}$ in the hypersurface
$\{F(\x,\y) = k\,\w^7\} \subset \A^3_{\Z}$, so that
\[
  \mcU(\Z) = \bigl\{(x,y,w) \in \Z^3 : F(x,y) = kw^7 \text{ and } \gcd(x,y) = 1\bigr\},
\]
and let
\[
  \pi \colon \mcU_{\Q} \to \Pone_{\Q}, \qquad (x,y,w) \mapsto (x:y) .
\]

\begin{lemma}[Choice of sign]
  \label{lem:sign}
  Let $\delta, \omega \in K^{\times}$ and $x, y \in \Q$ with
  $x - \theta y = \delta\omega^7$. Then $[\omega] \in C_{\theta,\delta}(\Q)$ and
  $\phi_{\theta,\delta}([\omega]) = (x:y)$. Thus the common target of $\pi$ and
  $\phi_{\theta,\delta}$ is the $\Pone$ with Zywina's coordinate $\t = \x/\y$.
\end{lemma}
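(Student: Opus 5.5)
The proof is a direct unwinding of the definitions \eqref{eq:P-forms}, \eqref{eq:C-and-phi} and \eqref{eq:phi}, together with the fact that $1,\theta,\theta^2$ is a $\Q$-basis of $K$.

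First, write $\omega = a_0 + a_1\theta + a_2\theta^2$ with $a_n \in \Q$, not all zero since $\omega \neq 0$. This is possible because $f$ is irreducible of degree $3$ and $K = \Q(\theta)$. Evaluating the polynomial identity \eqref{eq:P-forms} at $\mathbf{a} = (a_0,a_1,a_2)$ is legitimate, since it is an identity in $K[\sfa_0,\sfa_1,\sfa_2]$ and the $P^{(n)}_{\theta,\delta}$ have rational coefficients. It gives
\[
  x - \theta y = \delta\omega^{7} = P^{(0)}_{\theta,\delta}(\mathbf{a}) + P^{(1)}_{\theta,\delta}(\mathbf{a})\,\theta + P^{(2)}_{\theta,\delta}(\mathbf{a})\,\theta^{2},
\]
with all three values $P^{(n)}_{\theta,\delta}(\mathbf{a})$ rational.

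Next, compare coefficients in the basis $1,\theta,\theta^2$. This yields $P^{(2)}_{\theta,\delta}(\mathbf{a}) = 0$, $P^{(0)}_{\theta,\delta}(\mathbf{a}) = x$ and $P^{(1)}_{\theta,\delta}(\mathbf{a}) = -y$. The first equality says $[\omega] = (a_0:a_1:a_2) \in C_{\theta,\delta}(\Q)$; the point is well defined because all forms are homogeneous of degree $7$. The other two say $\phi_{\theta,\delta}([\omega]) = (x : -(-y)) = (x:y)$.

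The only point needing care is that $\phi_{\theta,\delta}$ is actually defined at $[\omega]$, i.e.\ that $x$ and $y$ are not both zero. This holds because $x - \theta y = \delta\omega^7 \neq 0$, as $\delta,\omega \in K^{\times}$. Scaling $\omega$ by $\lambda \in \Q^{\times}$ multiplies every $P^{(n)}_{\theta,\delta}(\mathbf{a})$ by $\lambda^7$, so the image point in $\Pone$ is unchanged. This is consistent with the identification $x - \theta y \mapsto (x:y)$ of $\PP(\Q + \Q\theta)$ with $\Pone_{\t}$.

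Finally, the last sentence of the lemma is interpretive. For a solution $(x,y,w) \in \mcU(\Z)$ we have $\pi(x,y,w) = (x:y)$, and by the above this equals $\phi_{\theta,\delta}([\omega])$ whenever $x - \theta y = \delta\omega^7$. So both maps land in the same $\Pone$, whose coordinate is $\t = \x/\y$. The sign in \eqref{eq:phi} is chosen precisely so that $\t = x/y$ rather than $-x/y$, matching Zywina's coordinate, in which $f(\t)$ homogenizes to $F(\x,\y) = \Norm(\x - \theta\y)$.

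I expect no real obstacle here: the whole argument is a coefficient comparison, and the nonvanishing check above is the only subtlety.
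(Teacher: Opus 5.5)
Your proposal is correct and matches the paper's proof: evaluate \eqref{eq:P-forms} at the coordinates of $\omega$, then compare coefficients in the basis $1,\theta,\theta^2$ to get $P^{(2)}_{\theta,\delta}(\mathbf{a}) = 0$, $P^{(0)}_{\theta,\delta}(\mathbf{a}) = x$ and $P^{(1)}_{\theta,\delta}(\mathbf{a}) = -y$. Your remarks that $(x,y) \neq (0,0)$ because $\delta\omega^7 \neq 0$, and that the result does not change when $\omega$ is scaled by $\Q^{\times}$, are details the paper leaves implicit.
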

\begin{proof}
  By \cref{eq:P-forms}, the coefficients of $1, \theta, \theta^2$ in
  $\delta\omega^7$ are the values at the coordinates of $\omega$ of
  $P^{(0)}_{\theta,\delta}$, $P^{(1)}_{\theta,\delta}$ and
  $P^{(2)}_{\theta,\delta}$. So $P^{(2)}_{\theta,\delta} = 0$,
  $P^{(0)}_{\theta,\delta} = x$ and $P^{(1)}_{\theta,\delta} = -y$, and
  $\phi_{\theta,\delta}([\omega]) = (x : y)$; when $(x,y,w) \in \mcU(\Z)$ this is
  $\pi(x,y,w)$.
\end{proof}

\begin{proposition}[{\cite{Bruin06-nn2}*{Theorem 3}, \cite{Bruin02-thesis}*{Lemma 3.1.2}}]
  \label{prop:bruin}
  With notation as above,
  \[
    \pi\bigl(\mcU(\Z)\bigr) \subseteq \bigcup_{\delta \in \Delta(\theta,k)}
    \phi_{\theta,\delta}\bigl(C_{\theta,\delta}(\Q)\bigr).
  \]
  More precisely, for $(x,y,w) \in \mcU(\Z)$ there are $\delta \in \Delta(\theta,k)$ and
  $\omega \in K^{\times}$ with $x - \theta y = \delta\omega^7$; then $[\omega] \in
  C_{\theta,\delta}(\Q)$ and $\phi_{\theta,\delta}([\omega]) = (x:y)$.
\end{proposition}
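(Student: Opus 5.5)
Given $(x,y,w) \in \mcU(\Z)$, I will show that the principal ideal $(x - \theta y)$ is a seventh power away from $\calS$. Since $K$ has class number one, this gives $x - \theta y = \delta\omega^7$ with $\delta$ an $\calS$-unit. Replacing $\delta$ by its chosen representative in $K(7,\calS)$ only changes $\omega$ by an element of $\OKS^{\times}$. Then \cref{lem:sign} gives the final two assertions, so the work is in the ideal factorization and the norm condition.

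\emph{Step 1: coprimality of conjugates.} Let $\mfp \notin \calS$ be a prime of $K$ dividing $x - \theta y$. Suppose $\mfp$ also divides $x - \sigma^i(\theta)y$ for some $i \in \{1,2\}$. Then $\mfp$ divides the difference $(\theta - \sigma^i(\theta))y$. Also $\theta$ is integral, since $f$ is monic with integer coefficients.

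\begin{itemize}
\item If $\mfp \mid y$, then $\mfp \mid x$, contradicting $\gcd(x,y) = 1$ because $\mfp \cap \Z$ is a single rational prime.
\item Otherwise $\mfp \mid \theta - \sigma^i(\theta)$, which divides $f'(\theta)$. Then $\mfp$ divides $\Norm(f'(\theta)) = \pm\disc(f)$, so $\mfp \in \calS$, a contradiction.
\end{itemize}

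\emph{Step 2: valuations.} Fix $\mfp \notin \calS$ with $\mfp \mid x - \theta y$, lying over the rational prime $p$. Then
\[
v_p(F(x,y)) = \sum_{\mfp' \mid p} f(\mfp'/p)\, v_{\mfp'}\bigl(\Norm(x-\theta y)\bigr).
\]
I would instead argue on ideals. We have $(F(x,y)) = \prod_i (x - \sigma^i(\theta)y)$ as ideals of $\OO_K$, and $v_{\mfp}$ of the left side is $e(\mfp/p)\,v_p(kw^7) = e\cdot 7v_p(w)$, because $p \nmid k$. Since $p \notin \calS$, $p$ is unramified, so $e = 1$. By Step 1 only the $i=0$ factor is divisible by $\mfp$. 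Hence $v_{\mfp}(x - \theta y) = 7v_p(w) \equiv 0 \pmod 7$. (If $p$ is inert, the conjugate ideals coincide, but then Step 1 forces $\mfp \nmid x - \theta y$ unless $p \in \calS$; the argument covers this automatically.)

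\emph{Step 3: Kummer step.} By Step 2, $(x - \theta y) = \mathfrak{a}\,\mathfrak{b}^7$ with $\mathfrak{a}$ supported on $\calS$. Class number one gives $\mathfrak{b} = (\omega)$, so $\delta := (x - \theta y)/\omega^7$ generates an ideal supported on $\calS$, i.e.\ $\delta \in \OKS^{\times}$. I then adjust $\delta$ to its representative in $K(7,\calS)$, absorbing the seventh-power factor into $\omega$.

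\emph{Step 4: norm condition.} Taking norms gives $kw^7 = F(x,y) = \Norm(x - \theta y) = \Norm(\delta)\Norm(\omega)^7$. Hence $\Norm(\delta)/k = (w/\Norm(\omega))^7 \in \Q^{\times 7}$, and $\delta \in \Delta(\theta,k)$. Here $w \neq 0$, since $F$ has no rational linear factor.

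The one subtle point is Step 2 at primes over $p$ that split versus are inert. This is handled by the observation that every $p \notin \calS$ is unramified and $p \nmid k$. The remaining steps are routine.
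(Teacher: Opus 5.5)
Your proof is correct and follows the paper's argument. You show the conjugates $x - \sigma^i(\theta)y$ are coprime at primes outside $\calS$, using $\gcd(x,y)=1$ and $\theta - \sigma^i(\theta) \mid f'(\theta)$; then class number one gives $x - \theta y = \delta\omega^7$ with $\delta \in \OKS^{\times}$, the norm puts $\delta$ in $\Delta(\theta,k)$, and \cref{lem:sign} finishes. The paper says ``the pairwise coprime ideals $\sigma^i(x-\theta y)\OKS$ have product $w^7\OKS$'' where you count $\mfp$-adic valuations in $\OO_K$, which amounts to the same thing. One slip: the displayed formula opening your Step 2 should read $v_{\mfp'}(x-\theta y)$, not $v_{\mfp'}(\Norm(x-\theta y))$, but you discard it and nothing depends on it.
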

\begin{proof}
  Let $(x,y,w) \in \mcU(\Z)$. Let $\mfp$ be a prime of $\OO_K$ not above
  $\calS$, and let $p$ be the rational prime below it; $p$ is unramified in
  $K$. If $\mfp$ divides both $x - \theta y$ and a conjugate
  $x - \sigma^{i}(\theta)y$ with $i \neq 0$, then it divides
  $(\theta - \sigma^{i}(\theta))x$ and $(\theta - \sigma^{i}(\theta))y$, hence, since
  $\gcd(x,y) = 1$, the element $\theta - \sigma^{i}(\theta)$, which divides
  $\disc(f)$; this is a contradiction. Applying powers of $\sigma$, we see that
  the three ideals $\sigma^{i}(x - \theta y)\OKS$ are pairwise coprime. Their
  product is $F(x,y)\OKS = kw^7\OKS = w^7\OKS$, since $k$ is an $\calS$-unit, so
  each of them is a seventh power. Since $K$ has class number one,
  $x - \theta y = \delta\omega^7$ with $\delta \in \OKS^{\times}$, which we may take among the fixed
  representatives. Taking norms, $\Norm(\delta)/k = (w/\Norm(\omega))^{7}$, so
  $\delta \in \Delta(\theta,k)$. Finally, the coefficient of $\theta^2$ in
  $\delta\omega^7 = x - \theta y$ is $0$, so
  $[\omega] \in C_{\theta,\delta}(\Q)$, and $\phi_{\theta,\delta}([\omega]) = (x:y)$ by \cref{lem:sign}.
\end{proof}

\subsection{The trace form}
Since $K/\Q$ is Galois, a point $\omega \in K$ can be read through its three
conjugates $\sigma^{i}(\omega) = \sfu_i(a_0,a_1,a_2)$. Accordingly, the linear
forms $\sfu_i$ are coordinates on $\PP^2$ over $K$, and Galois acts on them,
through the coefficients, by cyclic permutation. Indeed, if $\tau \in \Gal_{\Q}$ restricts to
$\sigma^{a}$ on $K$, then $\tau(\sfu_i) = \sfu_{i+a}$, indices modulo $3$. In these
coordinates the trace becomes a diagonal form, and the curves
$C_{\theta,\delta}$ become twists of the Fermat septic
\[
  \calF \colon \sfu_0^7 + \sfu_1^7 + \sfu_2^7 = 0 .
\]
Throughout, we put $\eta \colonequals \delta/f'(\theta)$, and we write
$\phi_0 \colon C_0 \to \Pone_{\Q}$ for $\phi_{\theta,f'(\theta)}$, the subscript
$0$ referring to the \cdef{trivial twist} $\eta = 1$; by
\cref{lem:trace-form} below, $C_0 = C_{\theta,f'(\theta)}$ is the curve
$\{\Tr(\omega^{7}) = 0\}$. Here and below, a curve written as
$\{\Tr(\eta\omega^{7}) = 0\}$ is described by the condition on its points $[\omega]$,
and its equation is $\Tr(\eta\,\sfu_0^{7}) = 0$. The
multiplicative map $\omega \mapsto \omega^{3}\sigma^{-1}(\omega)$ commutes with
$\sigma$, hence is defined over $\Q$; it sends $\{\Tr(\eta\omega^7) = 0\}$ to a
twist $\{\Tr(\eta\,v^{3}\sigma(v)) = 0\}$ of the Klein quartic, and this is the
subject of \cref{sec:klein}.

\begin{lemma}
  \label{lem:trace-form}
  For $\delta \in K^{\times}$,
  \[
    P^{(2)}_{\theta,\delta} = \Tr\bigl(\delta\,\sfu_0^{7}/f'(\theta)\bigr).
  \]
\end{lemma}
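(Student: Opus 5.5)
This is Euler's classical formula for the dual basis of $1,\theta,\theta^2$ under the trace pairing. I would first establish Euler's identity
\[
  \Tr\!\left(\frac{\theta^{j}}{f'(\theta)}\right) = \begin{cases} 0 & j = 0,1,\\ 1 & j = 2,\end{cases}
\]
for the monic cubic $f$. It follows from Lagrange interpolation: for any polynomial $g$ of degree at most $2$,
\[
  \sum_{i} g(\sigma^{i}(\theta))\,\frac{f(\t)}{(\t-\sigma^{i}(\theta))\,f'(\sigma^{i}(\theta))} = g(\t).
\]
Comparing the coefficients of $\t^{2}$ on both sides gives $\sum_i g(\sigma^i\theta)/f'(\sigma^i\theta)$ equal to the $\t^2$-coefficient of $g$. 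Taking $g = \t^j$ yields the identity, because $\sigma$ permutes the roots of $f$ and $f'(\sigma^i\theta) = \sigma^i(f'(\theta))$.

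Next I would apply this coefficientwise. The identity $\delta\,\sfu_0^{7} = P^{(0)}_{\theta,\delta} + P^{(1)}_{\theta,\delta}\theta + P^{(2)}_{\theta,\delta}\theta^{2}$ holds in $K[\sfa_0,\sfa_1,\sfa_2]$, and its coefficients $P^{(n)}_{\theta,\delta}$ lie in $\Q[\sfa_0,\sfa_1,\sfa_2]$. I would divide it by $f'(\theta)$ and then apply $\Tr$, which acts on coefficients and is $\Q[\sfa]$-linear. This gives
\[
  \Tr\bigl(\delta\,\sfu_0^7/f'(\theta)\bigr) = \sum_n P^{(n)}_{\theta,\delta}\,\Tr\bigl(\theta^n/f'(\theta)\bigr) = P^{(2)}_{\theta,\delta}.
\]

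There is no real obstacle. The one point needing care is the justification of $\Q[\sfa]$-linearity of the extended trace, which holds because the $P^{(n)}$ have rational coefficients. I would also note that $f'(\theta) \neq 0$, since $f$ is separable because $\disc f \neq 0$.
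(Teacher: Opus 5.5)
Your proof is correct and follows the paper's route. The paper cites Euler's formula $\Tr\bigl(\theta^{i}/f'(\theta)\bigr) = 0, 0, 1$ for $i = 0, 1, 2$, concludes that $\Tr(\,\cdot\,/f'(\theta))$ extracts the coefficient of $\theta^2$, and applies this coefficientwise exactly as you do; your Lagrange-interpolation derivation of Euler's formula is a valid self-contained replacement for that citation.
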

\begin{proof}
  By Euler's formula \cite{Neukirch-99}*{Chapter III, Proposition 2.4},
  $\Tr(\theta^{i}/f'(\theta))$ equals $0$ for $i = 0,1$ and $1$ for $i = 2$. So
  $\Tr(\,\cdot\,/f'(\theta))$ is the coefficient of $\theta^2$ in the power
  basis.
\end{proof}

The covering $\phi_{\theta,\delta}$ is geometrically Galois \cite{Bruin02-thesis}*{Theorem
  3.1.1}: the group $\Aut(\phi_{\theta,\delta,\Qbar})$ of geometric automorphisms is
$\mu_7(\Qbar)^3/\mu_7(\Qbar)$, acting by
\begin{equation}
  \label{eq:aut-phi}
  (\sfu_0:\sfu_1:\sfu_2) \mapsto (\zeta_0\sfu_0:\zeta_1\sfu_1:\zeta_2\sfu_2).
\end{equation}
Over $\Q$, the automorphism group scheme $\Autsch(\phi_{\theta,\delta})$ is the twist
$\Res_{K/\Q}(\mu_7)/\mu_7$ of $\mu_7^3/\mu_7$, acting by $[\omega] \mapsto [\zeta\omega]$.

\begin{lemma}
  \label{lem:fermat-twist}
  Let $\delta \in K^{\times}$ and $\eta \colonequals \delta/f'(\theta)$.
  \begin{enumerate}
  \item \label{item:diagonal} Over $K$, the forms $\sfu_i$ are
    linear coordinates on $\PP^{2}$, in which $C_{\theta,\delta}$
    becomes the diagonal septic
    \begin{equation}
      \label{eq:fermat-twist}
      \eta\,\sfu_0^{7} + \sigma(\eta)\,\sfu_1^{7} + \sigma^{2}(\eta)\,\sfu_2^{7} = 0 .
    \end{equation}
    In particular $C_{\theta,\delta}$ is a twist of $\calF$, a smooth plane
    curve of genus $15$, and $\phi_{\theta,\delta}$ is the map of degree $49$ of
    \cite{Bruin06-nn2}*{Theorem 3}, branched over the three roots of $f$.
  \item \label{item:twist-class} The covering $\phi_{\theta,\delta}$ is the twist of $\phi_0$
    by the image of $[\eta]$ under the isomorphism
    \[
      K^{\times}/\Q^{\times}K^{\times 7} \cong H^1\bigl(\Q,\Aut(\phi_{0,\Qbar})\bigr),
    \]
    and every twist of $\phi_0$ arises in this way. In particular
    $(C_{\theta,\delta},\phi_{\theta,\delta})$ depends, up to isomorphism, only on
    the class of $\eta$ in $K^{\times}/\Q^{\times}K^{\times 7}$.
  \item \label{item:C0} Over $\Q$, $C_0$ is the twist of $\calF$ by the cocycle
    $\Gal(K/\Q) \to \Aut(\calF)$ sending $\sigma$ to the cyclic permutation of
    the coordinates.
  \end{enumerate}
\end{lemma}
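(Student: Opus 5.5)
Part \eqref{item:diagonal} follows from \cref{lem:trace-form}. By that lemma $P^{(2)}_{\theta,\delta} = \Tr(\eta\,\sfu_0^{7}) = \eta\sfu_0^7 + \sigma(\eta)\sfu_1^7 + \sigma^2(\eta)\sfu_2^7$, because $\sigma$ acts on the coefficients and sends $\sfu_i$ to $\sfu_{i+1}$. The three forms $\sfu_i$ are linearly independent over $K$. Their coefficient matrix is the Vandermonde matrix in $\theta,\sigma(\theta),\sigma^2(\theta)$, whose determinant squared is $\disc(f)\neq 0$. So the $\sfu_i$ are coordinates on $\PP^2_K$, and the curve becomes the diagonal septic \eqref{eq:fermat-twist}. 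Rescaling each $\sfu_i$ by a seventh root of $\sigma^i(\eta)$ over $\Qbar$ identifies it with $\calF$. Hence $C_{\theta,\delta}$ is smooth of degree $7$ and genus $(7-1)(7-2)/2 = 15$.

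For the map, $\delta\sfu_0^7$ has zero $\theta^2$-coordinate on the curve, so it lies in $\Q+\Q\theta$. The value $x-\theta y$ then gives $\phi_{\theta,\delta} = (P^{(0)}:-P^{(1)})$. I would compute its degree from the identity $F(x,y) = \Norm(x-\theta y) = \Norm(\delta)\,(\sfu_0\sfu_1\sfu_2)^7$, which holds on $C_{\theta,\delta}$. The pullback of a point of $\Pone$ is cut out by a degree-$7$ form, hence by Bézout has degree $49$. The fibre over the root $\sigma^i(\theta)$ of $f$ is where $\sigma^{-i}$ conjugate of $x-\theta y$ vanishes, that is, $7\{\sfu_j=0\}\cap C$. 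This consists of $7$ points, each of multiplicity $7$. So $\phi_{\theta,\delta}$ is branched there, and Riemann--Hurwitz confirms there is no other branching: $2\cdot15-2 = 49(-2)+3\cdot 42$. This identifies $\phi_{\theta,\delta}$ with the map of \cite{Bruin06-nn2}*{Theorem 3}.

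For \eqref{item:twist-class}, the geometric automorphism group is $A = \mu_7^3/\mu_7$ as in \eqref{eq:aut-phi}, with Galois acting through $\Gal(K/\Q)$ by permuting factors. So $A = \Res_{K/\Q}\mu_7/\mu_7$. I would use Kummer theory. First, $H^1(\Q,\Res_{K/\Q}\mu_7) = K^\times/K^{\times7}$ by Shapiro's lemma. Next, the sequence $1\to\mu_7\to\Res_{K/\Q}\mu_7\to A\to1$ gives a long exact sequence. Its relevant part is
\[
\Q^\times/\Q^{\times7}\to K^\times/K^{\times7}\to H^1(\Q,A)\to H^2(\Q,\mu_7)\to H^2(K,\mu_7).
\]
The last map is restriction on $\Br[7]$. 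It is injective because the corestriction composed with restriction is multiplication by $3$, which is invertible on $7$-torsion. This gives the isomorphism $K^\times/\Q^\times K^{\times7}\cong H^1(\Q,A)$. Then I check that the twist by $[\eta]$ is the curve $C_{\theta,\delta}$. For $\tau\in\Gal_\Q$, choose $\beta$ with $\beta^7=\eta$ in $K\otimes\Qbar$. The map $\omega\mapsto\beta\omega$ sends $C_{\theta,\delta}$ to $C_0$ over $\Qbar$, since $\Tr(\eta\omega^7)=\Tr((\beta\omega)^7)$. It is compatible with $\phi$ because $\delta\omega^7 = f'(\theta)(\beta\omega)^7$. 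Its cocycle is $\tau\mapsto \tau(\beta)/\beta\in\Res\mu_7$, the Kummer class of $\eta$. Twists of a covering are classified by $H^1(\Q,\Aut)$, and the isomorphism is surjective, so every twist arises this way. For \eqref{item:C0} with $\eta=1$, the equation $\sum\sfu_i^7=0$ is already $\calF$ in the $K$-coordinates $\sfu_i$. The descent datum is $\sigma(\sfu_i)=\sfu_{i+1}$, so the twisting cocycle sends $\sigma$ to the cyclic permutation of coordinates.

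The main obstacle I expect is the bookkeeping in \eqref{item:twist-class}. This means matching cocycle conventions and modding out by the scalar $\mu_7$; the latter is where the factor $\Q^\times$ enters. It also means the injectivity on $H^2$, which I handle by the restriction–corestriction argument.
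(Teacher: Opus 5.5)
Your proposal is correct and follows the paper's proof step for step. For (1) you use the trace-form identity and the Vandermonde determinant; for (2) the Kummer cocycle of a seventh root of $\eta$ in $K\otimes_{\Q}\Qbar$, together with Shapiro's lemma, Hilbert~90 and the injectivity of $\Br(\Q)[7]\to\Br(K)[7]$ via corestriction; for (3) the cyclic descent datum.

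The only real difference is in (1): you verify the degree $49$ and the branching of $\phi_{\theta,\delta}$ directly, by B\'ezout and Riemann--Hurwitz, using that the fiber over $\sigma^{j}(\theta)$ is $7\,(\{\sfu_j=0\}\cap C_{\theta,\delta})$. The paper instead cites Bruin and reads the degree off the order of $\Aut(\phi_{\theta,\delta,\Qbar})$. Your B\'ezout count needs base-point freeness, which you did not state; it is immediate, since a common zero of $P^{(0)}_{\theta,\delta}$ and $P^{(1)}_{\theta,\delta}$ on the curve would kill all three $\sfu_j$.
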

\begin{proof}
  \eqref{item:diagonal} By \cref{lem:trace-form}, $P^{(2)}_{\theta,\delta} =
  \Tr(\eta\,\sfu_0^{7}) = \sum_{i} \sigma^{i}(\eta\,\sfu_0^{7}) = \sum_{i}
  \sigma^{i}(\eta)\,\sfu_i^{7}$, which is \cref{eq:fermat-twist}. The
  forms are $\sfu_i = \sum_{n} \sigma^{i}(\theta)^{n}\,\sfa_n$, whose
  matrix $\bigl(\sigma^{i}(\theta)^{n}\bigr)_{i,n}$ is the Vandermonde matrix
  of the three roots of $f$. Since $f$ is monic,
  \[
    \det\bigl(\sigma^{i}(\theta)^{n}\bigr)_{i,n}^{2}
    = \prod_{i < i'} \bigl(\sigma^{i}(\theta) - \sigma^{i'}(\theta)\bigr)^{2}
    = \disc(f) \neq 0,
  \]
  so $(\sfa_0,\sfa_1,\sfa_2) \mapsto (\sfu_0,\sfu_1,\sfu_2)$ is a linear change of
  coordinates over $K$. A diagonal form whose three coefficients are nonzero
  defines a smooth plane curve, which over $\Qbar$ is isomorphic to $\calF$ by
  rescaling the coordinates, and a smooth plane curve of degree $7$ has genus
  $(7-1)(7-2)/2 = 15$. The last assertion is \cite{Bruin06-nn2}*{Theorem 3};
  the degree also follows from the description of
  $\Aut(\phi_{\theta,\delta,\Qbar})$ in \cref{eq:aut-phi}, since the covering is Galois.

  \eqref{item:twist-class} Identify $K \otimes_{\Q} \Qbar$ with $\Qbar^{3}$ via
  $a \otimes b \mapsto (\sigma^{i}(a)b)_i$, and choose $\xi =
  (\xi_0,\xi_1,\xi_2) \in (K \otimes_{\Q} \Qbar)^{\times}$ with
  $\xi^7 = \eta$, that is, $\xi_i^7 = \sigma^{i}(\eta)$. By
  \eqref{item:diagonal}, $(\sfu_0:\sfu_1:\sfu_2) \mapsto
  (\xi_0\sfu_0:\xi_1\sfu_1:\xi_2\sfu_2)$ is an isomorphism
  $C_{\theta,\delta} \to C_0$ over $\Qbar$. It commutes with the maps to
  $\Pone$, because it carries the conjugates $\sigma^{i}(\delta\,\sfu_0^7) =
  \sigma^{i}(f'(\theta))\,\sigma^{i}(\eta)\,\sfu_i^7$, which determine
  $P^{(0)}_{\theta,\delta}$ and $P^{(1)}_{\theta,\delta}$, to those of
  $f'(\theta)\,\sfu_0^7$. The associated cocycle $\tau \mapsto \tau(\xi)/\xi$
  is the Kummer cocycle of $\eta$, with values in $\Res_{K/\Q}(\mu_7)(\Qbar)$,
  pushed forward to $\Aut(\phi_{0,\Qbar}) = (\Res_{K/\Q}(\mu_7)/\mu_7)(\Qbar)$. It
  remains to compute $H^1(\Q,\Aut(\phi_{0,\Qbar}))$. The exact sequence $1 \to \mu_7 \to \Res_{K/\Q}(\mu_7) \to
  \Res_{K/\Q}(\mu_7)/\mu_7 \to 1$, together with Shapiro's lemma and Hilbert~90,
  gives the exact sequence
  \[
    \Q^{\times}/\Q^{\times 7} \to K^{\times}/K^{\times 7} \to
    H^1\bigl(\Q, \Aut(\phi_{0,\Qbar})\bigr) \to \Br(\Q)[7] \to \Br(K)[7] .
  \]
  The last map is injective, since its composite with the corestriction is
  multiplication by $[K:\Q] = 3$.
  Hence $H^1\bigl(\Q,\Aut(\phi_{0,\Qbar})\bigr) \cong
  K^{\times}/\Q^{\times}K^{\times 7}$, and every class is the class of some
  $\eta$, that is, of $\phi_{\theta,\delta}$ with $\delta = \eta f'(\theta)$.

  \eqref{item:C0} For $\eta = 1$, \cref{eq:fermat-twist} is the equation of
  $\calF$, and $\tau \in \Gal_{\Q}$ acts on the coordinates $\sfu_i$ through
  the cyclic permutation $\sfu_i \mapsto \sfu_{i+a}$, where $\tau|_K = \sigma^a$.
\end{proof}

\begin{lemma}
  \label{lem:one-family}
  Let $\Sigma$ be the image of $\OO_K[1/7]^{\times}$ in
  $K^{\times}/\Q^{\times}K^{\times 7}$.
  \begin{enumerate}
  \item \label{item:eta-class} The pair $(C_{\theta,\delta},\phi_{\theta,\delta})$
    depends only on the class of $\eta = \delta/f'(\theta)$ in
    $K^{\times}/\Q^{\times}K^{\times 7}$: replacing $\delta$ by $r\nu^{7}\delta$
    with $r \in \Q^{\times}$ and $\nu \in K^{\times}$ amounts to the change of
    variables $\omega \mapsto \nu\omega$.
  \item \label{item:bijection} $\Sigma$ has $49$ elements, and for each of the
    six pairs $(F,k)$ the map $\delta \mapsto [\delta/f'(\theta)]$ is a bijection
    $\Delta(\theta,k) \to \Sigma$.
  \item \label{item:same-curves} The six descents involve \emph{the same}
    family of plane septics $\{\Tr(\eta\omega^{7}) = 0\}$, indexed by
    $[\eta] \in \Sigma$, and only the
    maps to $\Pone$ differ: in Zywina's coordinates they are related by the
    M\"obius transformation $t_{\ns} = 5 - 2t_{\spc}$.
  \end{enumerate}
\end{lemma}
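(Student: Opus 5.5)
For \eqref{item:eta-class}, the plan is to compute directly from the intrinsic description in \cref{rem:intrinsic}. If $\delta' = r\nu^{7}\delta$ with $r \in \Q^{\times}$ and $\nu \in K^{\times}$, then
\[
\delta'\omega^{7} = r\,\delta(\nu\omega)^{7}.
\]
So $[\omega] \in C_{\theta,\delta'}$ exactly when $[\nu\omega] \in C_{\theta,\delta}$, because $r$ does not affect membership in $\Q + \Q\theta$. Moreover $\phi_{\theta,\delta'}([\omega]) = \phi_{\theta,\delta}([\nu\omega])$ in $\PP(\Q+\Q\theta)$. Multiplication by $\nu$ is a $\Q$-linear automorphism of $\calV_K$, so this is an isomorphism of pairs over $\Q$. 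Since $\eta' = \delta'/f'(\theta) = r\nu^{7}\eta$, the pair depends only on $[\eta]$. This also follows from \cref{lem:fermat-twist}\eqref{item:twist-class}, but the explicit change of variables is cleaner.

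For \eqref{item:bijection}, I would first compute $\#\Sigma$. By \cref{tab:descent-data}, $\OO_K[1/7]^{\times} = \pm\langle\varepsilon_1,\varepsilon_2,\varpi\rangle$, so its image in $K^{\times}/K^{\times 7}$ is $(\Z/7)^{3}$. Quotienting by $\Q^{\times}$ kills the subgroup generated by the image of $7 = \pm\varpi^{3}\cdot(\text{unit})$. The fact to use is that $\Q^{\times} \cap \OO_K[1/7]^{\times}K^{\times 7}$ maps onto the class of $7$: a rational $r$ lying in $\OO_K[1/7]^{\times}K^{\times 7}$ has $v_{\mathfrak p}(r) \equiv 0 \pmod 7$ at all primes $\mathfrak p \nmid 7$, and $2$ being inert doesn't change this. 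So this intersection is generated by $7$ and $-1$ modulo $7$th powers. One must check that $7$ is not a $7$th power times something trivial in $K^{\times}/K^{\times 7}$; this is clear from the valuation $v_{\varpi}(7) = 3$. Hence $\#\Sigma = 7^{3}/7 = 49$.

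Next I would show that $\delta \mapsto [\delta/f'(\theta)]$ maps $\Delta(\theta,k)$ into $\Sigma$ injectively, and conclude bijectivity by counting, since both sets have $49$ elements. For membership, $f'(\theta)$ has norm $-7^{2}$ in the split case and $-2^{6}7^{2}$ in the non-split case by \cref{eq:fprime}. In the non-split case $f'_{\ns}(\theta_{\ns}) = 4f'_{\spc}(\theta_{\spc})$, and $4 \in \Q^{\times}$, so $f'(\theta)$ lies in $\Q^{\times}\OO_K[1/7]^{\times}$. For $\delta \in \Delta(\theta,k)$, write $\delta = 2^{b}\delta_0$ with $\delta_0 \in \OO_K[1/7]^{\times}$; then $\delta/f'(\theta) \in \Q^{\times}\OO_K[1/7]^{\times}$, so its class lies in $\Sigma$.

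Injectivity is the step I expect to be the main obstacle. Suppose $\delta'/\delta = r\nu^{7}$ with $\delta,\delta' \in \Delta(\theta,k)$. Comparing $2$-adic and $7$-adic valuations and norms, the norm condition $\Norm(\delta)/k \in \Q^{\times 7}$ pins down $v_2(\delta) \bmod 7$, because $\Norm(2) = 8$ and $k$ is fixed. It also pins down the $\varpi$-exponent modulo $7$, because $\Norm(\varpi) = 7$. Then $r^{3} = \Norm(\delta'/\delta)/\Norm(\nu)^{7} \in \Q^{\times 7}$ forces $r \in \Q^{\times 7}$, since $3$ is invertible mod $7$. So $\delta'/\delta \in K^{\times 7}$, and since the representatives are fixed, $\delta = \delta'$. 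I would also sanity-check $\#\Delta(\theta,k) = 49$ this way: the norm condition cuts $7^{3}$ or $7^{4}$ classes by one or two independent linear conditions mod $7$. A \Magma{} verification would back this up.

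For \eqref{item:same-curves}, by \cref{lem:trace-form} each $C_{\theta,\delta}$ is $\{\Tr(\eta\omega^{7}) = 0\}$. By \eqref{item:eta-class} and \eqref{item:bijection}, all six descents run over the same index set $\Sigma$. The curve $\{\Tr(\eta\omega^{7})=0\}$ depends only on $\eta$, not on $\theta$; for the two roots one needs $\Q+\Q\theta_{\ns} = \Q+\Q\theta_{\spc}$, which is immediate from $\theta_{\ns} = 5-2\theta_{\spc}$. Finally, $x - \theta_{\ns}y = (x-5y) + 2y\theta_{\spc}$, so the $\ns$-coordinate $t_{\ns}$ and the $\spc$-coordinate $t_{\spc} = (x-5y)/(-2y)$ satisfy $t_{\ns} = 5 - 2t_{\spc}$, as in \cref{lem:arithmetic-of-forms}\eqref{item:substitution}.
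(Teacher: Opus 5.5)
Your proof is correct, and mostly it is the paper's proof. Parts \eqref{item:eta-class} and \eqref{item:same-curves} are argued identically. In \eqref{item:bijection} you use the same ingredients for well-definedness ($\OKS^{\times}=2^{\Z}\times\OO_K[1/7]^{\times}$ and \cref{eq:fprime}) and for injectivity (norms give $r^{3}\in\Q^{\times7}$, hence $r\in\Q^{\times7}$), and you conclude by counting.

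The one real difference is how you get $\#\Sigma=49$. The paper cites \cref{lem:V}\eqref{item:F72}: since $\varpi^{3}=7\theta^{2}$, the class of $\varpi$ equals that of the unit $\theta^{-4}$. So $\OO_K^{\times}$ surjects onto $\Sigma$ with kernel $\pm\OO_K^{\times7}$. You instead start from the rank-$3$ group $\OO_K[1/7]^{\times}$ modulo seventh powers. You then show that the kernel of the passage to $K^{\times}/\Q^{\times}K^{\times7}$ is generated by the class of $7$, which is nontrivial because $v_{\varpi}(7)=3$. Your version is self-contained and needs only Dirichlet's rank count for $\OO_K[1/7]^{\times}$. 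The paper's version also shows that $\Sigma$ is spanned by unit classes, which it reuses for the eigenspace decomposition of $\Sigma$ and in \cref{prop:seven-adic}\eqref{item:in-image}.

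Two small points.

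First, $\#\Delta(\theta,k)=49$ is asserted in \cref{sec:descent} but only proved in this lemma. Your ``sanity check'' is therefore a necessary step, not an optional one. Your remark that the norm condition fixes the exponents of $2$ and $\varpi$ modulo $7$ already proves it. This is the paper's argument: $\Delta(\theta,k)$ is a fiber of the norm homomorphism from $K(7,\calS)$ to the $\calS$-units of $\Q$ modulo seventh powers. That homomorphism is surjective since $\Norm(2)=2^{3}$ and $\Norm(\varpi)=7$, so no \Magma{} computation is needed.

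Second, you use twice without comment that a seventh root in $K$ of an $\calS$-unit is an $\calS$-unit. You need it to identify the image of $\OO_K[1/7]^{\times}$ in $K^{\times}/K^{\times7}$ with $(\Z/7)^{3}$. You need it again to pass from $\delta'/\delta\in K^{\times7}$ to $\delta'/\delta\in\OKS^{\times7}$, and hence to $\delta=\delta'$ for the fixed representatives.
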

\begin{proof}
  \eqref{item:eta-class} Since $r\nu^{7}\delta\,\omega^{7} = r\,\delta\,(\nu\omega)^{7}$,
  the map $[\omega] \mapsto [\nu\omega]$ carries $C_{\theta,r\nu^{7}\delta}$ onto
  $C_{\theta,\delta}$ compatibly with the maps to $\Pone$, and the factor $r$
  changes neither the condition $P^{(2)}_{\theta,\delta} = 0$ nor the point
  $(P^{(0)}_{\theta,\delta} : -P^{(1)}_{\theta,\delta})$. (This is also
  \cref{lem:fermat-twist}\eqref{item:twist-class}.)

  \eqref{item:bijection} Since $2$ is inert in $K$,
  $\OKS^{\times} = 2^{\Z} \times \OO_K[1/7]^{\times}$ when $\calS = \{2,7\}$;
  and $f'(\theta) \in \Q^{\times}\OO_K[1/7]^{\times}$ for both roots, by
  \cref{eq:fprime}.
  So $\delta \mapsto [\delta/f'(\theta)]$ is a well-defined map $\Delta(\theta,k) \to \Sigma$.
  It is injective. Let $\delta, \delta' \in \Delta(\theta,k)$ with
  $\delta' = r\nu^{7}\delta$ for some $r \in \Q^{\times}$ and $\nu \in K^{\times}$.
  Taking norms gives $r^{3} \in \Q^{\times 7}$, since $\Norm(\delta)/k$ and
  $\Norm(\delta')/k$ are seventh powers; hence $r \in \Q^{\times 7}$, so
  $\delta' \in \delta K^{\times 7}$. A seventh root in $K$ of an $\calS$-unit
  is an $\calS$-unit, so $\delta' \in \delta\,\OKS^{\times 7}$, and
  $\delta = \delta'$ because both are among the fixed representatives of
  $K(7,\calS)$. It remains to count. Let $\Q_{\calS}^{\times}$ be the group of
  $\calS$-units of $\Q$. The norm induces a homomorphism $K(7,\calS) \to
  \Q_{\calS}^{\times}/\Q_{\calS}^{\times 7}$, whose target has order $7^{\#\calS}$,
  and $\Delta(\theta,k)$ is a set of representatives of the fiber above the class of $k$.
  This homomorphism is surjective, since $\Norm(2) = 2^{3}$ and
  $\Norm(1 + \theta_{\spc}) = 7$; so every fiber has $\#K(7,\calS)/7^{\#\calS} = 49$
  elements, by \cref{tab:descent-data}. Finally, $\#\Sigma = 49$ by
  \cref{lem:V}\eqref{item:F72} below, whose proof is independent of this
  section. An injective map between two sets of $49$ elements is a bijection.

  \eqref{item:same-curves} By \eqref{item:bijection}, the set of classes
  $[\eta]$, $\delta \in \Delta(\theta,k)$, is $\Sigma$ for all six pairs, and by
  \eqref{item:eta-class} the curves are determined by these classes. In the
  intrinsic description of \cref{rem:intrinsic}, $C_{\theta,\delta} = \{[\omega]
  : \delta\omega^{7} \in \Q + \Q\theta\}$, and $\Q + \Q\theta_{\ns} = \Q +
  \Q\theta_{\spc}$; so the curves are the same, and only the identification of
  $\PP(\Q + \Q\theta)$ with $\Pone_{\t}$ differs. Explicitly, $x - \theta_{\ns}y =
  (x - 5y) - \theta_{\spc}(-2y)$, so $(x : y)_{\ns} = (x - 5y : -2y)_{\spc}$,
  that is, $t_{\ns} = 5 - 2t_{\spc}$.
\end{proof}

\section{The Klein quartic quotient}
\label{sec:klein}

Let $\calZ \colon \sfv_0^3\sfv_1 + \sfv_1^3\sfv_2 + \sfv_2^3\sfv_0 = 0$ be the
Klein quartic. Following Elkies \cite{Elkies-99}*{Section 3.2, (3.5)--(3.6)},
we consider
\begin{equation}
  \label{eq:psi}
  \psi \colon \calF \to \calZ, \qquad (\sfv_0:\sfv_1:\sfv_2) \colonequals (\sfu_0^3\sfu_2
  : \sfu_1^3\sfu_0 : \sfu_2^3\sfu_1),
\end{equation}
which is well defined because of the identity
\begin{equation}
  \label{eq:elkies-identity}
  \sfv_0^3\sfv_1 + \sfv_1^3\sfv_2 + \sfv_2^3\sfv_0
  = (\sfu_0\sfu_1\sfu_2)^{3}\,\bigl(\sfu_0^7 + \sfu_1^7 + \sfu_2^7\bigr).
\end{equation}
The map $\psi$ has two properties that we use.
\begin{enumerate}
\item It is the quotient of $\calF$ by the group $H$ of order $7$ generated by
  the geometric automorphism
  $(\sfu_0:\sfu_1:\sfu_2) \mapsto (\zeta_7\sfu_0 : \zeta_7^{2}\sfu_1 :
  \zeta_7^{4}\sfu_2)$. Indeed, $\sfu_0^3\sfu_2$, $\sfu_1^3\sfu_0$,
  $\sfu_2^3\sfu_1$ are the only monomials of degree $4$ invariant under this
  automorphism, so $\psi$ is constant on the orbits of $H$ and factors through
  $\calF/H$; and $\psi$ has degree $7 = \#H$ \cite{Elkies-99}*{Section 3.2, p.~74},
  so the induced map $\calF/H \to \calZ$ is birational, hence an isomorphism,
  $\calZ$ being smooth.
\item It commutes with the cyclic permutations of the coordinates
  $(\sfu_0,\sfu_1,\sfu_2) \mapsto (\sfu_1,\sfu_2,\sfu_0)$ and
  $(\sfv_0,\sfv_1,\sfv_2) \mapsto (\sfv_1,\sfv_2,\sfv_0)$.
\end{enumerate}
The second property is what allows $\psi$ to be twisted. By
\cref{lem:fermat-twist}, over $K$ and in the coordinates $\sfu_i$, the curve $C_{\theta,\delta}$ is a diagonal septic on which
$\sigma$ acts by cyclic permutation of the coordinates. So $\psi$ should induce
a map from $C_{\theta,\delta}$ to a twist of $\calZ$, defined over $\Q$. We now
make this precise.

\subsection{Twisting the quotient map}
Take the coordinates $\sfu_i$ of \cref{lem:fermat-twist} on the source, and write
$\sfv_i$ for the same linear forms, regarded as coordinates on the target. The
formula for $\psi$ reads $\sfv_i = \sfu_i^3\sfu_{i-1} = \sigma^i(\sfu_0^3\sfu_2)$,
where $\sfu_0^3\sfu_2 = \sfu_0^3\,\sigma^{-1}(\sfu_0)$. This expression is Galois
equivariant, so we obtain a rational map defined over $\Q$, which we denote by the
same letter,
\[
  \psi \colon \PP^2_\Q \dashrightarrow \PP^2_\Q, \qquad [\omega] \mapsto [\omega^{3}\sigma^{-1}(\omega)];
\]
it is given by three quartic forms with rational coefficients, and over $K$ it
restricts to the map $\psi \colon \calF \to \calZ$ of \cref{eq:psi} on $C_0 \times_\Q K \cong
\calF$. For $\delta \in K^{\times}$, define quartic forms $Q^{(n)}_{\theta,\delta}
\in \Q[\sfa_0,\sfa_1,\sfa_2]$ by
\begin{equation}
  \label{eq:Q-forms}
  \delta\, \sfu_0^{3}\sfu_1
  = Q^{(0)}_{\theta,\delta} + Q^{(1)}_{\theta,\delta}\,\theta + Q^{(2)}_{\theta,\delta}\,\theta^{2},
\end{equation}
exactly as the forms $P^{(n)}_{\theta,\delta}$ were defined from $\sfu_0^7$ in
\cref{eq:P-forms}, and put
\begin{equation}
  \label{eq:Z-and-varphi}
  Z_{\theta,\delta} \colonequals \bigl\{Q^{(2)}_{\theta,\delta} = 0\bigr\} \subset \PP^2_{\Q},
  \qquad
  \varphi_{\theta,\delta} \colon Z_{\theta,\delta} \dashrightarrow \Pone_{\Q},
  \quad [v] \mapsto \bigl(Q^{(0)}_{\theta,\delta} : -Q^{(1)}_{\theta,\delta}\bigr).
\end{equation}
As in \cref{lem:trace-form}, $Q^{(2)}_{\theta,\delta} = \Tr\bigl(\eta\,
\sfu_0^3\sfu_1\bigr)$ with $\eta = \delta/f'(\theta)$; so
$Z_{\theta,\delta} = \{\Tr(\eta\,v^3\sigma(v)) = 0\}$ in the notation of
\cref{sec:descent}, and over $K$, in the coordinates $\sfv_i$, it becomes
\begin{equation}
  \label{eq:klein-twist}
  \eta\,\sfv_0^{3}\sfv_1 + \sigma(\eta)\,\sfv_1^{3}\sfv_2 + \sigma^{2}(\eta)\,\sfv_2^{3}\sfv_0 = 0,
\end{equation}
a twist of the Klein quartic $\calZ$.

\begin{lemma}
  \label{lem:quotient-map}
  For every $\delta \in K^{\times}$ and $n \in \{0,1,2\}$,
  \[
    Q^{(n)}_{\theta,\delta} \circ \psi
    = (\sfu_0\sfu_1\sfu_2)^{3}\, P^{(n)}_{\theta,\delta} .
  \]
  Consequently $\psi$ maps $C_{\theta,\delta}$ to $Z_{\theta,\delta}$, and the
  diagram
  \[
    \begin{tikzcd}
      \calF \arrow[r, "\psi"] & \calZ \\
      C_{\theta,\delta} \arrow[r, "\psi"] \arrow[u, "\cong"]
        \arrow[dr, "\phi_{\theta,\delta}"'] &
      Z_{\theta,\delta} \arrow[u, "\cong"'] \arrow[d, "\varphi_{\theta,\delta}"] \\
      & \Pone_{\Q}
    \end{tikzcd}
  \]
  commutes: the triangle over $\Q$, that is, $\phi_{\theta,\delta} =
  \varphi_{\theta,\delta} \circ \psi$, and the square over $\Qbar$, where the
  vertical isomorphisms are $\sfu_i \mapsto \xi_i\sfu_i$ and $\sfv_i
  \mapsto \xi_i^{3}\xi_{i-1}\sfv_i$ (indices modulo $3$) in the
  coordinates of \cref{eq:fermat-twist} and \cref{eq:klein-twist}, for any
  $\xi_i \in \Qbar$ with $\xi_i^7 = \sigma^i(\eta)$. On rational
  points, $\psi$ and $\varphi_{\theta,\delta}$ are everywhere defined.
\end{lemma}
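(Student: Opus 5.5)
The identity $Q^{(n)}_{\theta,\delta}\circ\psi = (\sfu_0\sfu_1\sfu_2)^3 P^{(n)}_{\theta,\delta}$ is one of polynomials over $\Q$, so I will check it over $K$ using the intrinsic description of \cref{rem:intrinsic}. For a point $[\omega]$, put $v = \omega^3\sigma^{-1}(\omega)$. Then $\psi([\omega]) = [v]$, and $\sigma(v) = \sigma(\omega)^3\omega$. Hence
\[
  v^3\sigma(v) = \omega^9\sigma^{-1}(\omega)^3\sigma(\omega)^3\omega = \omega^{7}\,\Norm(\omega)^3 .
\]
At the level of forms, substitute $\sfu_i \mapsto \sfv_i = \sfu_i^3\sfu_{i-1}$ into $\sfu_0^3\sfu_1$. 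This gives $(\sfu_0^3\sfu_2)^3(\sfu_1^3\sfu_0) = \sfu_0^{7}(\sfu_0\sfu_1\sfu_2)^3$, using $\sfu_2^3\sfu_1^3 = (\sfu_1\sfu_2)^3$. Multiplying by $\delta$, the element $\delta\,\sfu_0^3\sfu_1$ evaluated at $\psi$ equals $(\sfu_0\sfu_1\sfu_2)^3\,\delta\,\sfu_0^7$.

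The factor $(\sfu_0\sfu_1\sfu_2)^3$ is the norm form cubed, which has rational coefficients. Comparing coefficients of $1,\theta,\theta^2$ via \cref{eq:Q-forms} and \cref{eq:P-forms} therefore gives the identity for each $n$. The one point needing care is that $\psi$ is given by quartic forms $\sfa\mapsto(\text{coefficients of } \sfu_0^3\sfu_2)$, and that the composite $Q^{(n)}\circ\psi$ means substituting these forms. This is exactly the statement that the coefficient map $K[\sfa]\to\Q[\sfa]^3$ commutes with the substitution, which holds because the substitution is $\Q$-linear on coefficients and $\sigma$-equivariant.

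The consequences follow directly. On $C_{\theta,\delta}$ we have $P^{(2)}=0$, so $Q^{(2)}\circ\psi=0$ and $\psi$ maps into $Z_{\theta,\delta}$. For the triangle, we get $(Q^{(0)}:-Q^{(1)})\circ\psi = (\sfu_0\sfu_1\sfu_2)^3(P^{(0)}:-P^{(1)})$, which is $\phi_{\theta,\delta}$ wherever the norm does not vanish. The square over $\Qbar$ is checked directly in the diagonal coordinates: $\sfu_i\mapsto\xi_i\sfu_i$ followed by \cref{eq:psi} sends $\sfv_i=\sfu_i^3\sfu_{i-1}$ to $\xi_i^3\xi_{i-1}\sfv_i$. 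This rescaling carries \cref{eq:klein-twist} to $\calZ$, since $\sigma^i(\eta)\,\sfv_i^3\sfv_{i+1}$ becomes $\xi_i^{-21}\xi_{i+1}^{-7}\cdots$ after the appropriate substitution, and \cref{eq:elkies-identity} confirms consistency.

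The main obstacle is definedness on rational points. $\psi$ is undefined only where all $\sfu_i^3\sfu_{i-1}$ vanish, which forces two of the $\sfu_i$ to vanish. For $[\omega]\in\PP^2(\Q)$ we have $\omega\in K^\times$, so all conjugates are nonzero and $\psi([\omega])=[v]$ with $v\neq0$. For $\varphi_{\theta,\delta}$ at a rational point $[v]$ of $Z_{\theta,\delta}$, I use $\delta v^3\sigma(v)\neq0$ and $Q^{(2)}(v)=0$. Then $\delta v^3\sigma(v) = Q^{(0)}+Q^{(1)}\theta$ is nonzero, so $Q^{(0)}$ and $Q^{(1)}$ are not both zero. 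In the same way, on rational points the norm factor is nonzero, so the triangle identity holds pointwise and not just generically; by density it holds as maps.
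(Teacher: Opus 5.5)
Your proof of the displayed identity, of the triangle, and of definedness on rational points is the paper's argument. You substitute $\sfu_i \mapsto \sfu_i^{3}\sfu_{i-1}$ (on points, $v^{3}\sigma(v) = \Norm(\omega)^{3}\omega^{7}$), multiply by $\delta$, and compare coefficients of $1,\theta,\theta^{2}$, using that $(\sfu_0\sfu_1\sfu_2)^{3}$ has rational coefficients. You then use $\omega \in K^{\times}$ to see that $v$ and $\delta v^{3}\sigma(v)$ are nonzero. Your remark that the substitution commutes with extracting coefficients makes explicit a point the paper leaves implicit: the quartic forms defining $\psi$ are rational, so $\psi^{*}\sfu_i = \sigma^{i}(\psi^{*}\sfu_0)$.

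The step that fails as written is the square. You correctly check that $\sfu_i \mapsto \xi_i\sfu_i$ followed by $\psi$ equals $\psi$ followed by $\Lambda \colon \sfv_i \mapsto \xi_i^{3}\xi_{i-1}\sfv_i$. However, your reason why $\Lambda$ carries \eqref{eq:klein-twist} onto $\calZ$ does not hold up. Under $\Lambda$ the monomial $\sfv_i^{3}\sfv_{i+1}$ changes by the factor $\xi_i^{10}\xi_{i-1}^{3}\xi_{i+1}^{3}$ or its inverse, so a factor like $\xi_i^{-21}\xi_{i+1}^{-7}$ cannot arise. That is the factor for the different rescaling $\sfv_i \mapsto \sigma^{i}(\eta)^{-1}\sfv_i$, which leaves coefficients $\sigma^{i}(\eta)^{-2}\sigma^{i+1}(\eta)^{-1}$ that depend on $i$, so it does not untwist the quartic. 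The computation you need is, with $\sfv_i' = \xi_i^{3}\xi_{i-1}\sfv_i$,
\[
  \xi_i^{10}\xi_{i-1}^{3}\xi_{i+1}^{3} = \xi_i^{7}\,(\xi_0\xi_1\xi_2)^{3} = \sigma^{i}(\eta)\,(\xi_0\xi_1\xi_2)^{3},
  \qquad\text{so}\qquad
  \sum_i \sfv_i'^{3}\sfv_{i+1}' = (\xi_0\xi_1\xi_2)^{3}\sum_i \sigma^{i}(\eta)\,\sfv_i^{3}\sfv_{i+1}.
\]
The essential point is that the extra factor does not depend on $i$.

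Citing \eqref{eq:elkies-identity} does not replace this computation. It only gives $\Lambda(\psi(C_{\theta,\delta})) \subseteq \calZ$. To conclude $\Lambda(Z_{\theta,\delta}) = \calZ$ from it, you would still have to argue that $\psi(C_{\theta,\delta})$ is a curve lying on both $Z_{\theta,\delta}$ and the irreducible quartic $\Lambda^{-1}(\calZ)$; this forces the two quartic forms to be proportional. With either fix the proposal is complete.
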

\begin{proof}
  Substituting $\psi$ replaces $\sfu_i$ by $\sfu_i^3\sfu_{i-1}$, so it sends
  $\sfu_0^3\sfu_1$ to $(\sfu_0^3\sfu_2)^3\,\sfu_1^3\sfu_0 =
  (\sfu_0\sfu_1\sfu_2)^3\,\sfu_0^7$; on points, $v = \omega^3\sigma^{-1}(\omega)$
  satisfies $v^3\sigma(v) = \Norm(\omega)^3\omega^7$. In the group ring $\Z[\Gal(K/\Q)]$, this is the
  identity $(3 + \sigma)(3 + \sigma^{-1}) = 7 + 3(1 + \sigma + \sigma^{2})$.
  Multiply by $\delta$ and compare \cref{eq:P-forms} with \cref{eq:Q-forms}; this
  gives the displayed identity, hence the triangle. For the square, put
  $\sfu_i' = \xi_i\sfu_i$ and $\sfv_i' = \sfu_i'^{3}\sfu_{i-1}' =
  \xi_i^3\xi_{i-1}\sfv_i$. Then $\sum_i \sfu_i'^{7} = \sum_i
  \sigma^i(\eta)\,\sfu_i^7$ and
  \[
    \sum_i \sfv_i'^{3}\sfv_{i+1}'
    = \sum_i \xi_i^{10}\xi_{i-1}^{3}\xi_{i+1}^{3}\,\sfv_i^3\sfv_{i+1}
    = (\xi_0\xi_1\xi_2)^{3} \sum_i \sigma^i(\eta)\,\sfv_i^3\sfv_{i+1},
  \]
  so the vertical maps are isomorphisms onto $\calF$ and $\calZ$, and they
  intertwine the two maps $\psi$. A rational point is the class of some $\omega
  \in K^{\times}$, and then $v = \omega^3\sigma^{-1}(\omega) \neq 0$ and $\delta
  v^3\sigma(v) \neq 0$.
\end{proof}

\begin{remark}
  \label{rem:elkies-map}
  The map $\psi \colon \calF \to \calZ$ is a cyclic unramified cover of degree
  $7$; indeed $2 \cdot 15 - 2 = 7\,(2 \cdot 3 - 2)$. Its deck group, generated
  by $(\sfu_0:\sfu_1:\sfu_2) \mapsto (\zeta_7\sfu_0 : \zeta_7^{2}\sfu_1 :
  \zeta_7^{4}\sfu_2)$, is one of the two subgroups of order $7$ of
  $\Aut(\phi_{\theta,\delta,\Qbar}) = \mu_7(\Qbar)^3/\mu_7(\Qbar)$ that are
  stable under the cyclic permutation of the coordinates, which is why $\psi$
  descends to $\Q$. The other one, generated by $(\sfu_0:\sfu_1:\sfu_2) \mapsto
  (\zeta_7\sfu_0 : \zeta_7^{4}\sfu_1 : \zeta_7^{2}\sfu_2)$, has the invariant
  monomials $\sfu_0^3\sfu_1$, $\sfu_1^3\sfu_2$, $\sfu_2^3\sfu_0$; it gives
  $[\omega] \mapsto [\omega^3\sigma(\omega)]$ and a second twist of the Klein
  quartic, $\{\Tr(\eta\, v^3\sigma^{-1}(v)) = 0\}$. For $\eta = 1$ this second curve is the twist of
  $\calZ$ by the cocycle $\Gal(K/\Q) \to \Aut(\calZ)$ sending
  $\sigma$ to the inverse of the cyclic permutation, and it is not trivial: it
  has no points over $\Q_2$, whereas $Z_0 \cong \calZ$ by \cref{cor:Z0}. The choice
  of $\sigma$ is therefore not a convention. Twists
  of the Klein quartic failing the Hasse principle are studied in
  \cite{LGV-24}; see \cite{FLS-18} for the classification of the twists. The other
  six subgroups of order $7$ of $\Aut(\phi_{\theta,\delta,\Qbar})$ form two
  orbits of three under the cyclic permutation of the coordinates, so the
  corresponding quotient maps of $C_{\theta,\delta}$ are defined over $K$ but
  not over $\Q$. One orbit consists of the three subgroups generated by
  $(\sfu_0:\sfu_1:\sfu_2) \mapsto (\zeta_7\sfu_0:\sfu_1:\sfu_2)$ and its
  permutations, whose quotient maps are the projections
  $(\sfu_0:\sfu_1:\sfu_2) \mapsto (\sfu_1:\sfu_2)$ onto $\Pone$; the other
  three subgroups act freely, and their quotients are hyperelliptic curves of
  genus $3$. The coverings $\psi$ are twisted forms of the covering of the Klein
  quartic attached to the isogeny $1 - \zeta_7$ of its Jacobian, as in the
  descent theory for cyclic covers of Poonen and Schaefer \cite{PS-97}; compare
  \cite{PSS-07}*{Section 10.1}.
\end{remark}

\begin{lemma}
  \label{lem:twisting}
  Let $\delta,\delta' \in K^{\times}$ with $\delta' = r\,\nu^{3}\sigma(\nu)\,\delta$
  for some $r \in \Q^{\times}$ and $\nu \in K^{\times}$. Then $[v] \mapsto [\nu v]$
  is an isomorphism $Z_{\theta,\delta'} \to Z_{\theta,\delta}$ over $\Q$, and it
  commutes with the maps $\varphi$. Moreover, for every $\lambda \in K^{\times}$,
  \begin{equation}
    \label{eq:seventh-powers}
    \lambda^{7} = \Norm(\lambda)\cdot \nu^{3}\sigma(\nu),
    \qquad \nu = \lambda^{2}/\sigma(\lambda),
  \end{equation}
  so that $Z_{\theta,\delta}$ and $\varphi_{\theta,\delta}$ only depend on the
  class of $\delta/f'(\theta)$ in the quotient $K^{\times}/\Q^{\times}(K^{\times})^{3+\sigma}$ of
  $K^{\times}/\Q^{\times}K^{\times 7}$, where $(K^{\times})^{3+\sigma} \colonequals
  \{\nu^{3}\sigma(\nu) : \nu \in K^{\times}\}$.
\end{lemma}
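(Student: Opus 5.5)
The proof will be a direct computation using the intrinsic description $Z_{\theta,\delta} = \{[v] : \Tr(\eta\,v^{3}\sigma(v)) = 0\}$ with $\eta = \delta/f'(\theta)$, and $\varphi_{\theta,\delta}([v]) = [\delta v^{3}\sigma(v)] \in \PP(\Q+\Q\theta)$. This is the analogue of \cref{lem:one-family}\eqref{item:eta-class}, with the exponent $7$ replaced by the group-ring element $3+\sigma$.

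\emph{The isomorphism.} Set $\eta' = \delta'/f'(\theta) = r\,\nu^{3}\sigma(\nu)\,\eta$. For $v \in K^{\times}$ we have
\[
  \delta'\,v^{3}\sigma(v) = r\,\delta\,(\nu v)^{3}\sigma(\nu v).
\]
The coefficient of $\theta^2$ in $\delta' v^3\sigma(v)$ vanishes exactly when the coefficient of $\theta^2$ in $\delta(\nu v)^3\sigma(\nu v)$ does, since $r \in \Q^{\times}$. So $[v] \in Z_{\theta,\delta'}$ if and only if $[\nu v] \in Z_{\theta,\delta}$.

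In coordinates this amounts to the identity of forms $Q^{(n)}_{\theta,\delta'}(\mathbf{a}) = r\,Q^{(n)}_{\theta,\delta}(M_\nu \mathbf{a})$. Here $M_\nu \in \GL_3(\Q)$ is the matrix of multiplication by $\nu$ on $\calV_K$ in the basis $1,\theta,\theta^2$. To prove the identity, multiply \cref{eq:Q-forms} through and use that $\sfu_0 \mapsto \nu\sfu_0$ is the linear substitution $M_\nu$, under which $\sfu_1 = \sigma(\sfu_0) \mapsto \sigma(\nu)\sfu_1$. Hence $[v]\mapsto[\nu v]$ is a linear automorphism of $\PP^2_\Q$ carrying $Z_{\theta,\delta'}$ onto $Z_{\theta,\delta}$. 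The same identity for $n=0,1$ shows that
\[
  \bigl(Q^{(0)}_{\theta,\delta'} : -Q^{(1)}_{\theta,\delta'}\bigr) = \bigl(Q^{(0)}_{\theta,\delta} : -Q^{(1)}_{\theta,\delta}\bigr)\circ M_\nu ,
\]
which is the compatibility with the maps $\varphi$.

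\emph{The identity \cref{eq:seventh-powers}.} With $\nu = \lambda^2/\sigma(\lambda)$ we get
\[
  \nu^{3}\sigma(\nu) = \frac{\lambda^{6}}{\sigma(\lambda)^{3}}\cdot\frac{\sigma(\lambda)^{2}}{\sigma^{2}(\lambda)} = \frac{\lambda^{6}}{\sigma(\lambda)\,\sigma^{2}(\lambda)} = \frac{\lambda^{7}}{\Norm(\lambda)}.
\]
In the group ring this is $(3+\sigma)(2-\sigma) = 6 + \sigma - \sigma^2 = 7 - (1+\sigma+\sigma^2)$, using $\sigma^3 = 1$.

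\emph{The conclusion.} The identity shows $K^{\times 7} \subseteq \Q^{\times}(K^{\times})^{3+\sigma}$, so $K^{\times}/\Q^{\times}(K^{\times})^{3+\sigma}$ is a quotient of $K^{\times}/\Q^{\times}K^{\times 7}$. By the first part, $(Z_{\theta,\delta},\varphi_{\theta,\delta})$ depends up to isomorphism only on the class of $\delta$ modulo $\Q^{\times}(K^{\times})^{3+\sigma}$. Since $f'(\theta)$ is fixed, this is the same as depending only on the class of $\eta = \delta/f'(\theta)$.

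The only point needing care, which I expect to be the main (mild) obstacle, is the bookkeeping of $\sigma$ versus $\sigma^{-1}$. The quartic uses $v^3\sigma(v)$, and the matching $\nu^3\sigma(\nu)$ must come out correctly from the substitution $\sfu_1 = \sigma(\sfu_0)$. Everything else is formal.
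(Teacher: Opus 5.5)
Your proof is correct and follows the paper's own argument. The isomorphism and its compatibility with $\varphi$ come from the identity $\delta' v^{3}\sigma(v) = r\,\delta(\nu v)^{3}\sigma(\nu v)$, and \cref{eq:seventh-powers} comes from expanding $\nu^{3}\sigma(\nu)$ directly with $\nu = \lambda^{2}/\sigma(\lambda)$.

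One small slip: in the group ring, $(3+\sigma)(2-\sigma) = 6 - \sigma - \sigma^{2}$, not $6 + \sigma - \sigma^{2}$. Your final expression $7 - (1+\sigma+\sigma^{2})$ and your direct computation are both right, so the argument is unaffected.
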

\begin{proof}
  The first statement follows from $\delta'v^{3}\sigma(v) = r\,\delta(\nu
  v)^{3}\sigma(\nu v)$. For the second, $\nu^{3}\sigma(\nu) =
  \lambda^{6}\sigma(\lambda)^{-3}\sigma(\lambda)^{2}\sigma^{2}(\lambda)^{-1} = \lambda^{7}/\Norm(\lambda)$.
\end{proof}

\subsection{The untwisted quartic \texorpdfstring{$Z_0$}{Z0}}
We use the following identity of Klein \cite{Klein-1879}*{\S 5}; see also
\cite{Elkies-99}*{(1.3)}.
\begin{proposition}[Klein's identity]
  \label{prop:klein-identity}
  Let $\sqrt{-7} \colonequals \zeta_7 + \zeta_7^2 + \zeta_7^4 - \zeta_7^3 -
  \zeta_7^5 - \zeta_7^6$, and let
  \begin{equation}
    \label{eq:alpha}
    \alpha \colonequals \sqrt{-7}\,\bigl(\zeta_7^{4} - \zeta_7^{-4}\bigr)
    = 4c_1 + 2c_2 + c_4
    = \theta_{\spc}^{2} - 5\theta_{\spc} + 1 \in \OO_K ,
    \qquad c_a \colonequals \zeta_7^{a} + \zeta_7^{-a}.
  \end{equation}
  For $\vartheta = \tfrac{1}{7}\bigl(\sfv_0\,\alpha + \sfv_1\,\sigma(\alpha) +
  \sfv_2\,\sigma^{2}(\alpha)\bigr) \in K[\sfv_0,\sfv_1,\sfv_2]$, with $\sigma$
  acting on the coefficients, we have
  \[
    \Tr\bigl(\vartheta^{3}\sigma(\vartheta)\bigr) = \sfv_0^{3}\sfv_1 + \sfv_1^{3}\sfv_2 + \sfv_2^{3}\sfv_0 .
  \]
\end{proposition}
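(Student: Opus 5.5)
The plan is to reduce the identity to a statement about the three conjugates of $\vartheta$ and then check that statement directly. Write $\vartheta_i \colonequals \sigma^{i}(\vartheta) = \tfrac17\sum_{j}\sfv_j\,\sigma^{i+j}(\alpha)$, indices modulo $3$. Then
\[
  \Tr\bigl(\vartheta^{3}\sigma(\vartheta)\bigr) = \sum_{i}\vartheta_i^{3}\vartheta_{i+1}.
\]
So the claim says that the linear substitution $\sfv \mapsto (\vartheta_0,\vartheta_1,\vartheta_2)$, with circulant matrix $M = \tfrac17\bigl(\sigma^{i+j}(\alpha)\bigr)_{i,j}$, preserves the Klein form $\sfv_0^3\sfv_1+\sfv_1^3\sfv_2+\sfv_2^3\sfv_0$.

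\textbf{First step: the entries of $M$.} I will pin down the numbers explicitly. The root $\theta_{\spc} = 1-c_1$ satisfies $c_1^2 = c_2+2$, so
\[
  \theta_{\spc}^2 - 5\theta_{\spc} + 1 = c_1^2 + 3c_1 - 3 = c_2 + 3c_1 - 1 .
\]
Using $1 + c_1 + c_2 + c_4 = 0$, this equals $4c_1 + 2c_2 + c_4$, which gives the second and third expressions in \eqref{eq:alpha}.

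For the first expression, I expand $\sqrt{-7}\,(\zeta^4 - \zeta^{-4})$ as a sum of twelve roots of unity. The exponents of $\sqrt{-7}$ are $1,2,4$ with sign $+$ and $3,5,6$ with sign $-$. Shifting these by $+4$ and by $-4$, with the corresponding signs, and collecting terms should give exactly $4c_1 + 2c_2 + c_4$.

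The conjugates are then $\sigma(\alpha) = 4c_2 + 2c_4 + c_1$ and $\sigma^2(\alpha) = 4c_4 + 2c_1 + c_2$. Equivalently, $\sigma^{k}(\alpha) = \sqrt{-7}\,(\zeta^{2^k\cdot 4} - \zeta^{-2^k\cdot 4})$, up to the sign from $\sigma(\sqrt{-7}) = \sqrt{-7}$, since $\sigma$ fixes the quadratic subfield.

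\textbf{Second step: the key identity.} The cleanest route is to recognize Klein's matrix. The classical order-$2$ element of $\Aut(\calZ) \cong \PSL_2(\F_7)$ in Klein's coordinates is
\[
  \frac{1}{\sqrt{-7}}\bigl(\zeta^{a_{ij}} - \zeta^{-a_{ij}}\bigr)_{i,j},
\]
and $M$ has exactly this shape after the factor $\sqrt{-7}^2 = -7$ is absorbed. The coordinates must be indexed by $\zeta^{1}, \zeta^{2}, \zeta^{4}$ in the order matching $\sigma$, namely $c_1 \to c_2 \to c_4$. I will match $M$ with Elkies' formula (1.3) and cite it, since Klein's identity is stated as classical.

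\textbf{Fallback: direct verification.} To be self-contained I would instead verify the identity directly. Both sides are ternary quartics, with $15$ monomials each. The left side is $\sigma$-invariant and also invariant under the cyclic shift of the $\sfv_j$, because $M$ is circulant. Hence only the $5$ orbits of monomials under cyclic permutation need checking: $\sfv_0^4$, $\sfv_0^3\sfv_1$, $\sfv_0^3\sfv_2$, $\sfv_0^2\sfv_1^2$, $\sfv_0^2\sfv_1\sfv_2$. Each coefficient is a trace of a polynomial in the $c_a$ divided by $7^4$. It reduces to evaluating sums such as
\[
  \sum_i \sigma^i\bigl(\alpha^3\sigma(\alpha)\bigr)
\]
in the basis $c_1, c_2, c_4$, and this is a finite computation in $\Z[c_1]$. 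This computation is the main obstacle, because it is bookkeeping-heavy. I would organize it using $\alpha = \sqrt{-7}\,s_4$ with $s_a = \zeta^a - \zeta^{-a}$. Then every coefficient becomes $49 \cdot 7^{-4}$ times a trace of a product of four $s$'s, and products of the $s_a$ are handled by the relation
\[
  s_a s_b = c_{a+b} - c_{a-b},
\]
which keeps the computation manageable. The expected outcome is $1$ for the orbit of $\sfv_0^3\sfv_1$ and $0$ for the other four orbits. I would also confirm this in \Magma, as the paper does for its other computations.
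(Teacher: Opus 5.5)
Your proposal is correct and is essentially the paper's proof. You rewrite $\Tr(\vartheta^{3}\sigma(\vartheta))$ as the Klein form evaluated at $\vartheta,\sigma(\vartheta),\sigma^{2}(\vartheta)$, identify the matrix $\bigl(\sigma^{i+j}(\alpha)/7\bigr)_{i,j}$ with Klein's involution, and keep direct expansion as a fallback; your five-orbit check does give $1$ on the orbit of $\sfv_0^{3}\sfv_1$ and $0$ on the other four. One bookkeeping point: your matrix depends on $i+j$, so it is Hankel rather than circulant, and it agrees with Klein's matrix (entries $-(\zeta_7^{a}-\zeta_7^{-a})/\sqrt{-7}$) only after a cyclic permutation of the rows --- the paper absorbs this by evaluating at $(\sfv_1,\sfv_2,\sfv_0)$ --- which is harmless because the Klein form is invariant under cyclic permutations.
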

\begin{proof}
  We have $\sigma(\alpha) = \sqrt{-7}(\zeta_7 - \zeta_7^{-1})$ and
  $\sigma^2(\alpha) = \sqrt{-7}(\zeta_7^2 - \zeta_7^{-2})$. Since $\sqrt{-7}/7 =
  -1/\sqrt{-7}$, the numbers $\alpha_i \colonequals \sigma^i(\alpha)/7$ are the
  entries $-(\zeta_7^{a} - \zeta_7^{-a})/\sqrt{-7}$, $a \in \{4,1,2\}$, of the
  symmetric matrix of order two in Klein's group of $168$ automorphisms of his
  quartic.

  Let $\Phi(\sfv_0,\sfv_1,\sfv_2) = \sfv_0^3\sfv_1 + \sfv_1^3\sfv_2 +
  \sfv_2^3\sfv_0$. Klein's matrix
  \[
    M \colonequals \begin{pmatrix*}[r] \alpha_1 & \alpha_2 & \alpha_0 \\ \alpha_2 & \alpha_0 & \alpha_1 \\ \alpha_0 & \alpha_1 & \alpha_2 \end{pmatrix*}
    \in \SL_3(K)
  \]
  satisfies $M^2 = I$ and $\Phi \circ M = \Phi$. Each row of $M$ is the image under
  $\sigma$ of the previous one. So, for the column vector
  $\mathbf{s} = (s_1,s_2,s_3) \colonequals (\sfv_1,\sfv_2,\sfv_0)$, whose entries are fixed by $\sigma$, the entries of
  $M\mathbf{s}$ are $\vartheta, \sigma(\vartheta), \sigma^{2}(\vartheta)$, since $\alpha_1 s_1 + \alpha_2 s_2 + \alpha_0 s_3 = \vartheta$. Hence
  $\Phi(\mathbf{s}) = \Phi(M\mathbf{s}) = \Tr(\vartheta^{3}\sigma(\vartheta))$, and
  $\Phi(\mathbf{s}) = \Phi(\sfv_0,\sfv_1,\sfv_2)$ because $\Phi$ is invariant under cyclic
  permutations. (The identity can also be checked by expanding both sides.)
\end{proof}

\begin{lemma}[Hurwitz]
  \label{lem:hurwitz}
  The rational points of the Klein quartic are the trivial ones:
  \[\calZ(\Q) = \{(1:0:0),\ (0:1:0),\ (0:0:1)\}.\]
\end{lemma}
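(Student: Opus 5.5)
Hurwitz's lemma will follow by a reduction to Fermat's Last Theorem for exponent $7$ (Lamé), using the structure already set up in the paper. The plan is to reverse the cover $\psi\colon \calF \to \calZ$ of \cref{eq:psi} over $\Q$: given a rational point of $\calZ$, produce a rational point of a curve closely related to the Fermat septic, and show that it is trivial.

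\textbf{Step 1: easy points.} First I will dispose of points with some $v_i=0$. If $v_0=0$, the equation gives $v_1^3v_2=0$, so the point is $(0:0:1)$ or $(0:1:0)$; the other cases follow by cyclic symmetry. So assume $v_0v_1v_2\neq 0$.

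\textbf{Step 2: Hurwitz's substitution.} Scale so that $v_0,v_1,v_2$ are coprime integers. From $v_0^3v_1+v_1^3v_2+v_2^3v_0=0$, a gcd argument will show that $v_0,v_1,v_2$ are, up to sign, of the shape
\[
  v_0=a^3c,\qquad v_1=b^3a,\qquad v_2=c^3b
\]
for pairwise coprime integers $a,b,c$. The key observations are these. A prime $p$ dividing $v_0$ and $v_1$ divides $v_2^3v_0$ and $v_1^3v_2$; coprimality forces the exponents of $p$ in $v_0,v_1$ to interlock, and the $p$-adic valuations satisfy a cyclic linear system whose solutions are exactly the exponents coming from $(3,1)$-monomials. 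Concretely, writing $g_{ij}=\gcd(v_i,v_j)$ and comparing valuations of the three terms (two of them must have equal minimal valuation), one gets $v_p(v_0)=3v_p(g_{01})+\dots$. This is precisely the image of $\psi$ on integer points.

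\textbf{Step 3: conclude with FLT.} Substituting into \cref{eq:elkies-identity} gives
\[
  (abc)^3(a^7+b^7+c^7)=0,
\]
with $abc\neq 0$, hence $a^7+b^7+c^7=0$ in nonzero integers. This contradicts Lamé's theorem, Fermat's Last Theorem for exponent $7$.

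\textbf{Main obstacle and alternative.} The main obstacle is Step~2: the valuation bookkeeping that shows every rational point lifts to $\calF(\Q)$ up to signs. Equivalently, one must show that the unramified $\mu_7$-torsor $\psi$ has only the trivial twist with rational points, since the relevant twists are parametrized by signs and units of $\Z$, all of which are seventh powers. I would handle this prime by prime: for each $p$, set $e_i=v_p(v_i)$ with $\min e_i=0$, solve the resulting piecewise-linear system, and show that it forces $(e_0,e_1,e_2)=(3x+z,\,3y+x,\,3z+y)$ for nonnegative integers $x,y,z$. As a fallback, I would cite Elkies' proof via an elliptic quotient of rank zero \cite{Elkies-99}*{Section 3.1}, or Hurwitz \cite{Hurwitz-08} directly.
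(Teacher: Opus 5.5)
Your proposal is correct and follows the paper's proof essentially step for step: the prime-by-prime valuation argument shows that each prime dividing $v_0v_1v_2$ divides exactly two coordinates, with $v_p(v_0)=3v_p(v_1)$ when $p\mid\gcd(v_0,v_1)$ and cyclically, which gives $(|v_0|,|v_1|,|v_2|)=(a^3c,\,b^3a,\,c^3b)$; the identity \eqref{eq:elkies-identity} then reduces the statement to Lam\'e's theorem. The one detail to spell out is the sign, which the paper handles by first replacing $(v_0,v_1,v_2)$ by its negative so that $v_0v_1v_2>0$, and then choosing the signs of $a,b,c$ so that the equalities hold exactly.
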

\begin{proof}
  We summarize the argument of Hurwitz \cite{Hurwitz-08}, which reduces the
  statement to Fermat's Last Theorem for exponent $7$, proved by Lam\'e
  \cite{Lame-1840}. Let $(x:y:z) \in \calZ(\Q)$ with $x,y,z \in \Z$ coprime. If one
  coordinate vanishes, say $z = 0$, then $x^3y = 0$, and the point is one of
  the three above. Suppose that $xyz \neq 0$, and let $p$ be a prime. The three
  terms $x^3y$, $y^3z$, $z^3x$ sum to zero, so the smallest of their $p$-adic
  valuations is attained at least twice. If $p$ divided exactly one of $x,y,z$,
  say $x$, then $y^3z$ would be the only term prime to $p$; so every prime
  dividing $xyz$ divides exactly two of $x,y,z$. If $p \mid x$ and $p \mid y$, then
  $v_p(z^3x) = v_p(x) < v_p(x^3y)$ forces $v_p(x) = 3v_p(y)$, and cyclically.
  Hence $|x| = a^3c$, $|y| = b^3a$, and $|z| = c^3b$ for
  \begin{align*}
    a = \prod_{p \mid \gcd(x,y)} p^{v_p(y)},\quad b = \prod_{p \mid \gcd(y,z)} p^{v_p(z)},\quad c = \prod_{p \mid \gcd(z,x)} p^{v_p(x)}.
  \end{align*}
  Replacing $(x,y,z)$ by $(-x,-y,-z)$ if necessary, we may assume that $xyz > 0$,
  and then signs can be given to $a,b,c$ so that $(x,y,z) = (a^3c, b^3a,
  c^3b)$. Now \cref{eq:elkies-identity} gives $(abc)^3(a^7 + b^7 + c^7) = 0$,
  with $abc \neq 0$, which contradicts Lam\'e's theorem.
\end{proof}

\begin{remark}
  \label{rem:elkies-hurwitz}
  In the language of this section, the proof lifts a rational point of $\calZ$
  to a rational point of $\calF$ along the cyclic unramified cover
  $(\sfu_0:\sfu_1:\sfu_2) \mapsto (\sfu_0^3\sfu_2 : \sfu_1^3\sfu_0 :
  \sfu_2^3\sfu_1)$. Elkies \cite{Elkies-99}*{Section 3.1} gives a proof that
  does not use Lam\'e's theorem: the quotient of $\calZ$ by the cyclic
  permutation of the coordinates is an elliptic curve of conductor $49$ whose
  Mordell--Weil group has order $2$, as a $2$-descent in the style of Fermat
  shows, and the fibers above its two rational points contain no rational
  points other than the three points of \cref{lem:hurwitz}. Read in the opposite direction, through
  the same cover, this gives a proof of Lam\'e's theorem
  \cite{Elkies-99}*{Section 3.2}.
\end{remark}

\begin{corollary}
  \label{cor:Z0}
  The \cdef{untwisted quartic}
  $Z_0 \colonequals \{\Tr(v^{3}\sigma(v)) = 0\} = Z_{\theta,f'(\theta)}$
  is isomorphic over $\Q$ to the Klein quartic, and
  \[
    Z_0(\Q) = \bigl\{[\alpha],\ [\sigma(\alpha)],\ [\sigma^{2}(\alpha)]\bigr\}.
  \]
\end{corollary}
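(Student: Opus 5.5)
The plan is to use Klein's identity (\cref{prop:klein-identity}) to write down an explicit isomorphism over $\Q$, and then transport Hurwitz's lemma (\cref{lem:hurwitz}) along it.

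First, the equality $Z_0 = Z_{\theta,f'(\theta)}$ is immediate. For $\delta = f'(\theta)$ we have $\eta = 1$, and the remark after \cref{eq:Q-forms} gives $Q^{(2)}_{\theta,f'(\theta)} = \Tr(\sfu_0^3\sfu_1)$. This is exactly the equation $\Tr(v^3\sigma(v)) = 0$.

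Next, consider the map
\[
  \Lambda \colon (v_0:v_1:v_2) \mapsto [\vartheta], \qquad \vartheta = \tfrac17\bigl(v_0\alpha + v_1\sigma(\alpha) + v_2\sigma^2(\alpha)\bigr).
\]
It is $\Q$-linear from $\Q^3$ to $K = \calV_K$, so it defines a projective linear map $\PP^2_\Q \to \PP(\calV_K) \cong \PP^2_\Q$ over $\Q$. To see that it is invertible, note that its matrix in the basis $1,\theta,\theta^2$ has columns the coordinates of $\alpha,\sigma(\alpha),\sigma^2(\alpha)$. Equivalently, it is invertible if and only if $\alpha$ generates a normal basis of $K/\Q$, which holds if and only if $\det\bigl(\sigma^{i+j}(\alpha)\bigr) \neq 0$. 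This determinant is, up to sign and a power of $7$, the determinant of Klein's matrix $M$, which lies in $\SL_3(K)$, so it is nonzero. Since $\vartheta$ has coefficients in $K$ and $\sigma$ acts on those coefficients, the forms $\Tr(\eta\,\sfu_0^3\sfu_1)$ in the $\sfa$-coordinates pull back under $\Lambda$ to $\Tr(\vartheta^3\sigma(\vartheta))$. By \cref{prop:klein-identity} this equals $\sfv_0^3\sfv_1 + \sfv_1^3\sfv_2 + \sfv_2^3\sfv_0$. Hence $\Lambda$ restricts to an isomorphism $\calZ \to Z_0$ over $\Q$.

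Finally, \cref{lem:hurwitz} gives $\calZ(\Q) = \{(1:0:0),(0:1:0),(0:0:1)\}$. Under $\Lambda$ these go to $[\alpha/7] = [\alpha]$, $[\sigma(\alpha)]$ and $[\sigma^2(\alpha)]$ in $K^\times/\Q^\times$, via the identification \cref{eq:P2=K*/Q*}, which proves the second claim. These three classes are distinct, since the images of three distinct points under an isomorphism are distinct.

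The main obstacle is being careful that $\Lambda$ really is the $\Q$-structure-compatible map, that is, that the coordinate $\sfu_0$ on $\PP(\calV_K)$ evaluated at the point $\Lambda(v)$ equals $\vartheta(v)$ and that $\sfu_1$ evaluated there equals $\sigma(\vartheta)(v)$. This holds because, by \cref{rem:intrinsic}, the point $[\omega]$ has $\sfu_i = \sigma^i(\omega)$, and $\vartheta(v)$ is an element of $K$ when $v \in \Q^3$. The nondegeneracy check on $\det M$ is routine.
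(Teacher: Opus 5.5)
Your proposal is correct and follows the paper's proof: Klein's identity (\cref{prop:klein-identity}) supplies the $\Q$-linear change of coordinates $\sfv \mapsto [\vartheta]$ that identifies $\calZ$ with $Z_0$ and sends the three coordinate points to $[\alpha]$, $[\sigma(\alpha)]$, $[\sigma^{2}(\alpha)]$, and \cref{lem:hurwitz} then gives the rational points. You also make explicit a check the paper leaves implicit, namely that this linear map is invertible because $\det\bigl(\sigma^{i+j}(\alpha)\bigr) = 7^{3}\det M \neq 0$ (this already follows from $M^{2} = I$). One small slip: the stray $\eta$ in your pullback sentence should be $1$.
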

\begin{proof}
  By \cref{prop:klein-identity}, $Z_0 \cong \calZ$, with the three points
  above corresponding to $(1:0:0)$, $(0:1:0)$, $(0:0:1)$. These are the only
  rational points of $\calZ$, by \cref{lem:hurwitz}.
\end{proof}

\section{Proof of \texorpdfstring{\cref{thm:superelliptic}}{Theorem 1}}
\label{sec:proof-thm-1}

In this section (with the exception of \cref{prop:seven-adic})
$\theta = \theta_{\spc}$ and $f = f_{\spc}$, and $\varpi = 1 + \theta$ is as in
\cref{tab:descent-data}. Then
\begin{equation}
  \label{eq:varpi}
  \Norm(\varpi) = 7, \qquad 7 = \varpi^{3}\theta^{-2}, \qquad
  \sigma(\varpi) = \varpi(4 - \varpi), \qquad
  f'(\theta) = \varpi^{2}\Bigl(3 - \frac{2\varpi}{\theta}\Bigr),
\end{equation}
and $\varpi$ is a uniformizer of the completion $K_7$ of $K$ at its prime above
$7$, which is a totally ramified cubic extension of $\Q_7$ with residue field
$\F_7$. The element $\theta = \varpi - 1$ is a unit.

\subsection{A homomorphism}
Recall from \cref{lem:one-family} the group $\Sigma$, the image of
$\OO_K[1/7]^{\times}$ in $K^{\times}/\Q^{\times}K^{\times 7}$. By
\cref{lem:twisting}, the quartic $Z_{\theta,\delta}$ is isomorphic to $Z_0$
over $\Q$, compatibly with the maps to $\Pone$, as soon as the class of
$\eta = \delta/f'(\theta)$ lies in $(3+\sigma)\Sigma$. We shall see in
\cref{lem:V} that $\Sigma \cong \F_7^{2}$ is the sum of the eigenspaces of
$\sigma$ for the eigenvalues $2$ and $4$, and that $(3+\sigma)\Sigma$ is the
$2$-eigenspace, because $3 + 4 = 0$ in $\F_7$. What we need is therefore a
homomorphism $\Sigma \to \F_7$ that kills the $2$-eigenspace and not the
$4$-eigenspace, and whose value on $(x - \theta y)/f'(\theta)$ we can compute.
A nonzero homomorphism $\phi$ with $\phi \circ \sigma = 4\phi$ has the first
property, since $2\phi(v) = \phi(\sigma v) = 4\phi(v)$ forces $\phi(v) = 0$ on
the $2$-eigenspace. The following $7$-adic map $\kappa$ has both properties.

Every $g \in K_7^{\times}$ can be written as $g = \varpi^{a}u$ with $a \in \Z$ and
$u \equiv u_0(1 + b\varpi) \pmod{\varpi^2}$, where $u_0 \in \F_7^{\times}$ and
$b \in \F_7$. We define
\[
  \kappa \colon K_7^{\times} \to \F_7, \qquad \kappa(g) \colonequals b + 4a .
\]

\begin{lemma}
  \label{lem:kappa}
  The map $\kappa$ is a homomorphism. It is trivial on $\Q_7^{\times}$ and on
  seventh powers, and $\kappa \circ \sigma = 4\kappa$.
\end{lemma}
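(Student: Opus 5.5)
The plan is to exploit the standard filtration of $K_7^{\times}$. Since $K_7/\Q_7$ is totally ramified of degree $3$ with uniformizer $\varpi$, every $g \in K_7^{\times}$ factors uniquely as $g = \varpi^{a}u$ with $a = v_{\varpi}(g)$ and $u \in \OO_{K_7}^{\times}$. The residue field is $\F_7$, so $u \equiv u_0 \pmod{\varpi}$ for a unique $u_0 \in \F_7^{\times}$. Lift $u_0$ to an integer $\tilde u_0 \in \Z$. Then $u/\tilde u_0 \in 1 + \varpi\OO_{K_7}$, and we set $b \colonequals (u/\tilde u_0 - 1)/\varpi \bmod \varpi$.

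\emph{Well-definedness.} Changing the lift $\tilde u_0$ multiplies it by an element of $1 + 7\Z_7 \subseteq 1 + \varpi^{3}\OO_{K_7}$, since $7 = \varpi^{3}\theta^{-2}$ by \cref{eq:varpi}. Such a change does not affect $b$. Hence $\kappa$ is well defined.

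\emph{Homomorphism property.} Additivity of $a$ is clear. For the units, compute
\[
  \tilde u_0(1+b\varpi)\cdot \tilde u_0'(1+b'\varpi) \equiv \tilde u_0\tilde u_0'\,\bigl(1+(b+b')\varpi\bigr) \pmod{\varpi^2},
\]
and note that $\tilde u_0\tilde u_0'$ is a lift of $u_0u_0'$. By the remark above, the choice of this lift is irrelevant, so $b$ is additive. In other words, $\kappa$ is the sum of $4v_{\varpi}$ and the composite of $\OO_{K_7}^{\times} \to U_1/U_2 \cong \F_7$, where the map to $U_1$ is $u \mapsto u/\tilde u_0$.

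\emph{Triviality on seventh powers.} For $g^{7}$ one has $a \mapsto 7a \equiv 0$ and $b \mapsto 7b = 0$ in $\F_7$.

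\emph{Triviality on $\Q_7^{\times}$.} Write $\Q_7^{\times} = 7^{\Z} \times \Z_7^{\times}$. A unit $v \in \Z_7^{\times}$ is congruent modulo $7$, hence modulo $\varpi^{3}$, to an integer lift of its residue, so $b = 0$ and $a = 0$. For $7 = \varpi^{3}\theta^{-2}$, we have $a = 3$. Also $\theta = \varpi - 1 = -(1-\varpi)$, so
\[
  \theta^{-2} = (1-\varpi)^{-2} \equiv 1 + 2\varpi \pmod{\varpi^2},
\]
which gives $b = 2$. Therefore $\kappa(7) = 2 + 12 = 14 = 0$.

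\emph{Equivariance $\kappa \circ \sigma = 4\kappa$.} Since $7$ is totally ramified in $K$, there is a unique prime above $7$. Hence $\sigma$ extends to a continuous automorphism of $K_7$, and it acts trivially on the residue field $\F_7$. It therefore suffices to check the identity on the generators $\varpi$, integer lifts $\tilde u_0$, and $1 + b\varpi$; the latter generate $\OO_{K_7}^{\times}$ modulo $U_2$, on which $\kappa$ vanishes.

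On the integer lifts both sides are $0$, since these lie in $\Q_7^{\times}$.

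For $1 + b\varpi$, use $\sigma(\varpi) = \varpi(4-\varpi)$ from \cref{eq:varpi}. This gives
\[
  \sigma(1+b\varpi) \equiv 1 + 4b\varpi \pmod{\varpi^2},
\]
so $\kappa$ is multiplied by $4$, as required.

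For $\varpi$ itself, write $\sigma(\varpi) = 4\varpi(1 - \varpi/4)$. Since $1/4 \equiv 2 \pmod 7$, we have $1 - \varpi/4 \equiv 1 - 2\varpi \pmod{\varpi^{2}}$. Hence
\[
  \kappa(\sigma\varpi) = 4 + (-2) = 2 .
\]
This agrees with $4\kappa(\varpi) = 4 \cdot 4 = 16 \equiv 2$.

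The only delicate points are bookkeeping. One must make sure the lift $\tilde u_0$ is taken in $\Z$, so that its ambiguity is absorbed by $7 \in \varpi^{3}\OO_{K_7}$. One must also get the constants in \cref{eq:varpi} right, since these are what make the $4a$ correction term work. I expect the computation of $\kappa(7)$ and $\kappa(\sigma\varpi)$ to be the main, though routine, obstacle.
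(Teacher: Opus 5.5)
Your proof is correct and is essentially the paper's argument. It uses $7 = \varpi^{3}\theta^{-2}$ with $\theta^{-2} \equiv 1 + 2\varpi \pmod{\varpi^{2}}$ to get $\kappa(7) = 2 + 12 = 0$, and $\sigma(\varpi) = 4\varpi(1 - \varpi/4) \equiv 4\varpi(1 - 2\varpi)$ for the equivariance. The differences are cosmetic: you spell out why $b$ does not depend on the lift of $u_0$, and you check $\kappa\circ\sigma = 4\kappa$ on the generators $\varpi$, $\Z_7^{\times}$ and $1 + b\varpi$, rather than for a general $g = \varpi^{a}u$ at once. That generator check tacitly uses $\sigma(1 + \varpi^{2}\OO_{K_7}) = 1 + \varpi^{2}\OO_{K_7}$, which holds because $\sigma$ preserves $v_{\varpi}$.
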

\begin{proof}
  Both $a$ and $b$ are additive, and seventh powers die in $\F_7$. A unit of
  $\Z_7$ is congruent to an element of $\F_7^\times$ modulo $7$, hence modulo
  $\varpi^3$, so $\kappa(\Z_7^{\times}) = 0$. By \cref{eq:varpi}, $7 =
  \varpi^{3}\theta^{-2}$ with $\theta^{-2} = (1-\varpi)^{-2} \equiv 1 + 2\varpi
  \pmod{\varpi^{2}}$, so $\kappa(7) = 2 + 12 = 0$. Finally, $\sigma$ acts
  trivially on the residue field, and $\sigma(\varpi) = \varpi(4 - \varpi) \equiv
  4\varpi(1 - 2\varpi) \pmod{\varpi^{3}}$, since $4^{-1} = 2$ in $\F_7$. Hence, for
  $g = \varpi^a u$ as above, $\sigma(g) \equiv \varpi^{a}\cdot 4^{a}(1 -
  2\varpi)^{a}\sigma(u)$ modulo $\varpi^{a+2}$, with $\sigma(u) \equiv u_0(1 +
  4b\varpi) \pmod{\varpi^{2}}$, and $\kappa(\sigma(g)) = (4b - 2a) + 4a = 4b + 2a
  = 4\kappa(g)$.
\end{proof}

The group $\Sigma$ is an $\F_7[\sigma]$-module, and $1 + \sigma + \sigma^2$ acts
as zero on it, because norms are rational. As $\t^{2} + \t + 1 = (\t - 2)(\t -
4)$ in $\F_7[\t]$, the action of $\sigma$ on $\Sigma$ is diagonalizable, with
eigenvalues among the primitive cube roots of unity $2$ and $4$ of $\F_7$;
\cref{lem:V} shows that both occur.

\begin{lemma}
  \label{lem:V}
  Let $\varepsilon_1 = \theta - 1$ and $\varepsilon_2 = 2 - \theta$ be the units of
  \cref{tab:descent-data}. Then the following hold.
  \begin{enumerate}
  \item \label{item:units} $\OO_K^{\times} = \pm\langle \varepsilon_1,\varepsilon_2\rangle$,
    with $\sigma(\varepsilon_1) = \varepsilon_2^{-1}$ and
    $\sigma(\varepsilon_2) = -\varepsilon_1\varepsilon_2^{-1}$.
  \item \label{item:F72} $\Sigma$ is generated by the classes of $\varepsilon_1$
    and $\varepsilon_2$, and $\OO_K^{\times} \to \Sigma$ induces an isomorphism
    $\OO_K^{\times}/\pm\OO_K^{\times 7} \cong \Sigma$; in particular
    $\Sigma \cong \F_7^{2}$.
  \item \label{item:V-structure} $\Sigma = \Sigma_2 \oplus \Sigma_4$, where
    $\Sigma_2$ is generated by $[\varepsilon_1\varepsilon_2^{2}]$ and $\sigma$
    acts on it as multiplication by $2$, and $\Sigma_4$ is generated by
    $[\varepsilon_1\varepsilon_2^{4}]$ and $\sigma$ acts on it as multiplication
    by $4$.
  \item \label{item:kernel} $\kappa$ induces a homomorphism $\Sigma \to \F_7$,
    with $\kappa(\varepsilon_1) = 3$ and $\kappa(\varepsilon_2) = 2$, and
    \[
      \ker\bigl(\kappa|_{\Sigma}\bigr) = \Sigma_2 = (3+\sigma)\Sigma .
    \]
  \end{enumerate}
\end{lemma}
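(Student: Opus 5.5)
We prove the four parts in order, starting from explicit arithmetic in $\OO_K = \Z[\theta]$.

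\emph{Part \eqref{item:units}.} We use $f(\theta) = \theta^3 - 4\theta^2 + 3\theta + 1 = 0$. Evaluating $f$ at $1$ and $2$ gives $\Norm(\theta - 1) = -f(1) = -1$ and $\Norm(2-\theta) = f(2) = -1$. So $\varepsilon_1,\varepsilon_2$ are units. The Galois relations are verified from $\sigma(\theta) = -\theta^2 + 2\theta + 2$: for instance $\sigma(\varepsilon_1)\varepsilon_2 = (-\theta^2+2\theta+1)(2-\theta)$, which reduces to $1$ modulo $f$, and similarly for $\sigma(\varepsilon_2)$. For the equality $\OO_K^{\times} = \pm\langle\varepsilon_1,\varepsilon_2\rangle$ we quote the standard fact that the cyclotomic units $\zeta^a+\zeta^{-a}$ generate the units of $\Q(\zeta_7)^+$, since the class number is $1$. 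Alternatively, a regulator computation, or the \Magma{} check as elsewhere in the paper, settles it.

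\emph{Part \eqref{item:F72}.} $\OO_K[1/7]^{\times}$ is generated by $-1$, $\varepsilon_1$, $\varepsilon_2$ and $\varpi$. By \cref{eq:varpi}, $\varpi^3 = 7\theta^2$, and $\theta = \varpi - 1$ is a unit. The plan is to show that $\varpi$ itself lies in $\Q^{\times}\OO_K^{\times}K^{\times7}$. Since $7$ is rational, the relation gives $\varpi^3 \equiv \theta^2$ modulo $\Q^{\times}$. Because $3$ is invertible modulo $7$ ($3\cdot5 = 15 \equiv 1$), we get $\varpi \equiv \varpi^{15} = (\varpi^3)^5 \equiv \theta^{10}$ modulo $\Q^{\times}K^{\times7}$, and $\theta^{10}$ is a unit. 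Also $-1 = (-1)^7$. Hence $\Sigma$ is the image of $\langle\varepsilon_1,\varepsilon_2\rangle$, which gives a surjection $\OO_K^{\times}/\pm\OO_K^{\times7} \cong \F_7^2 \to \Sigma$.

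For injectivity, suppose $u = r\nu^7$ with $u$ a unit, $r \in \Q^{\times}$ and $\nu \in K^{\times}$. Taking norms gives $\pm1 = r^3\Norm(\nu)^7$, so $r^3 \in \pm\Q^{\times7}$ and hence $r \in \pm\Q^{\times7}$. Therefore $\pm u$ is a seventh power in $K$, and a seventh root of a unit is a unit, so $u \in \pm\OO_K^{\times7}$. This shows $\Sigma \cong \F_7^2$.

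\emph{Part \eqref{item:V-structure}.} Part \eqref{item:units} gives the matrix of $\sigma$ on $\F_7^2$ with respect to the basis $[\varepsilon_1],[\varepsilon_2]$, with signs discarded: $\sigma e_1 = -e_2$ and $\sigma e_2 = e_1 - e_2$. We then check directly that
\[
\sigma(e_1 + 2e_2) = -e_2 + 2e_1 - 2e_2 = 2e_1 - 3e_2 = 2(e_1+2e_2),
\]
using $-3 \equiv 4$. In the same way, $\sigma(e_1+4e_2) = 4e_1 - 5e_2 = 4(e_1+4e_2)$, using $-5 \equiv 2 = 4\cdot 4$. The two vectors are independent, so $\Sigma = \Sigma_2 \oplus \Sigma_4$.

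\emph{Part \eqref{item:kernel}.} By \cref{lem:kappa}, $\kappa$ kills $\Q^{\times}$ and seventh powers, so it factors through $\Sigma$. We compute its values in $K_7$ with $\theta = \varpi - 1$:
\[
\varepsilon_1 = \varpi - 2 = -2(1 - 4\varpi), \qquad \varepsilon_2 = 3 - \varpi = 3(1 - 5\varpi),
\]
using $2^{-1} = 4$ and $3^{-1} = 5$ in $\F_7$. This gives $b = -4 = 3$ and $b = -5 = 2$, with $a = 0$ in both cases, so $\kappa(\varepsilon_1) = 3$ and $\kappa(\varepsilon_2) = 2$.

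Then $\kappa(\varepsilon_1\varepsilon_2^2) = 3 + 4 = 0$ and $\kappa(\varepsilon_1\varepsilon_2^4) = 3 + 8 = 4 \ne 0$. So $\ker\kappa|_\Sigma = \Sigma_2$; this is also consistent with $\kappa\circ\sigma = 4\kappa$. Finally, $3+\sigma$ acts on $\Sigma_2$ as $5$, which is invertible, and on $\Sigma_4$ as $7 = 0$. Hence $(3+\sigma)\Sigma = \Sigma_2$.

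The main obstacle is the generation statement in part \eqref{item:units}. Everything else is finite arithmetic; that step either needs the citation on cyclotomic units or a computational verification.
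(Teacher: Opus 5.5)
Your proposal is correct and follows the paper's proof essentially step for step: the same Galois relations, the same reduction of $[\varpi]$ to a unit class via $\varpi^{3} = 7\theta^{2}$ (your $\theta^{10}$ agrees with the paper's $\theta^{-4}$ modulo seventh powers), the same norm argument for injectivity, and the same expansions $\varepsilon_1 \equiv -2(1-4\varpi)$ and $\varepsilon_2 \equiv 3(1-5\varpi)$ modulo $\varpi^{2}$. The paper settles the generation of $\OO_K^{\times}$ by a \Magma{} check; your cyclotomic-unit alternative is also valid, since the cyclotomic units of $\Q(\zeta_7)^{+}$ have index $h^{+} = 1$, but you should add the identification $\pm\langle\varepsilon_1,\varepsilon_2\rangle = \pm\langle \zeta_7+\zeta_7^{-1}, \zeta_7^{3}+\zeta_7^{-3}\rangle$, which follows from $\varepsilon_1 = -(\zeta_7+\zeta_7^{-1})$ and $\varepsilon_2/\varepsilon_1 = \zeta_7^{3}+\zeta_7^{-3}$.
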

\begin{proof}
  In part \eqref{item:units}, the two relations are a direct computation, and
  that $-1$, $\varepsilon_1$ and $\varepsilon_2$ generate $\OO_K^{\times}$ is
  checked with \Magma{}. Since $\varpi^3 = 7\theta^2$,
  the class of $\varpi$ in $K^{\times}/\Q^{\times}K^{\times 7}$ equals that of
  $\theta^{-4}$, and $\theta$ is a unit; so $\Sigma$ is generated by the classes
  of units, hence by those of $\varepsilon_1$ and $\varepsilon_2$. If a unit $u$
  lies in $\Q^{\times}K^{\times 7}$, say $u = r\nu^{7}$ with $r \in \Q^{\times}$
  and $\nu \in K^{\times}$, then $\pm 1 = \Norm(u) = r^{3}\Norm(\nu)^{7}$, so
  $r \in \pm\Q^{\times 7}$, and $u = \pm(s\nu)^{7}$ with $s \in \Q^{\times}$, where
  $s\nu$ is a unit because its seventh power $\pm u$ is a unit. Hence the kernel of
  $\OO_K^{\times} \to \Sigma$ is $\pm\OO_K^{\times 7}$, and \eqref{item:F72}
  follows from \eqref{item:units}. For \eqref{item:V-structure}, in additive
  notation, $\sigma[\varepsilon_1] = -[\varepsilon_2]$ and $\sigma[\varepsilon_2]
  = [\varepsilon_1] - [\varepsilon_2]$, so
  $\sigma[\varepsilon_1\varepsilon_2^{2}] = 2[\varepsilon_1\varepsilon_2^{2}]$ and
  $\sigma[\varepsilon_1\varepsilon_2^{4}] = 4[\varepsilon_1\varepsilon_2^{4}]$, and
  these two classes span $\Sigma \cong \F_7^{2}$. By \cref{lem:kappa}, $\kappa$
  is well defined on $\Sigma$. From $\varepsilon_1 = \varpi - 2 \equiv -2(1 -
  4\varpi)$ and $\varepsilon_2 = 3 - \varpi \equiv 3(1 - 5\varpi)$ modulo
  $\varpi^{2}$ we get $\kappa(\varepsilon_1)
  = 3$ and $\kappa(\varepsilon_2) = 2$, so $\kappa(\varepsilon_1\varepsilon_2^{2})
  = 0$ and $\kappa(\varepsilon_1\varepsilon_2^{4}) = 4$. Finally $3 + \sigma$ acts
  as $5$ on $\Sigma_2$ and as $0$ on $\Sigma_4$.
\end{proof}

\subsection{The \texorpdfstring{$7$}{7}-adic step}
\label{ss:7-adic}
\begin{proposition}
  \label{prop:seven-adic}
  Let $\bullet \in \{\ns,\spc\}$, and write $f = f_{\bullet}$,
  $F = F_{\bullet}$ and $\theta = \theta_{\bullet}$. Let $x,y$ be coprime integers with
  $7 \mid F(x,y)$.
  \begin{enumerate}
  \item \label{item:kappa-zero} We have
    \[
      \kappa\left(\frac{x - \theta y}{f'(\theta)}\right) = 0 .
    \]
  \item \label{item:in-image} If moreover $v_p(F(x,y)) \equiv 0 \pmod 7$ for
    every prime $p \notin \{2,7\}$, then
    \[
      \frac{x - \theta y}{f'(\theta)} \in \Q^{\times}\cdot\bigl\{\nu^{3}\sigma(\nu) : \nu \in K^{\times}\bigr\}.
    \]
  \item \label{item:Z-is-Klein} Suppose that $F(x,y) = kw^{7}$ with $w \in \Z$,
    for one of the pairs $(F,k)$ of \cref{tab:descent-data} with $7 \mid k$ (so
    that $7 \mid F(x,y)$), and
    let $\delta \in \Delta(\theta,k)$ and $\omega \in K^{\times}$ be such that
    $x - \theta y = \delta\omega^{7}$, as in \cref{prop:bruin}. Then
    $\kappa(\delta/f'(\theta)) = 0$, and $\delta/f'(\theta) =
    r\,\nu^{3}\sigma(\nu)$ for some $r \in \Q^{\times}$ and $\nu \in
    K^{\times}$. In particular, $[v] \mapsto [\nu v]$ is an isomorphism from
    $Z_{\theta,\delta}$ to $Z_0$ over $\Q$, compatible with the maps to $\Pone$,
    and $Z_{\theta,\delta}$ is isomorphic to the Klein quartic $\calZ$ over
    $\Q$.
  \end{enumerate}
\end{proposition}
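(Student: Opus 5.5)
The plan is to compute everything $7$-adically using the homomorphism $\kappa$ of \cref{lem:kappa}, then transfer to the descent via \cref{lem:V} and \cref{lem:twisting}.

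\emph{Part \eqref{item:kappa-zero}: reduction to the split root.} First reduce the non-split case to the split one. By \cref{lem:arithmetic-of-forms}\eqref{item:substitution}, $x - \theta_{\ns}y = x' - \theta_{\spc}y'$ with $(x',y') = (x-5y,-2y)$, and $f_{\ns}'(\theta_{\ns}) = 4f_{\spc}'(\theta_{\spc})$ by \cref{eq:fprime}. Since $\kappa$ kills $\Q_7^{\times}$, the two values of $\kappa$ agree. The pair $(x',y')$ may fail to be coprime only by a factor $2$, which is again harmless since $\kappa$ kills rationals. Moreover $7 \mid F_{\ns}(x,y)$ means $7 \mid x$, i.e.\ $7 \mid x'+y'$, with $7 \nmid y'$.

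So assume $\theta = \theta_{\spc}$, $7 \mid x+y$ and $7 \nmid y$. Write
\[
  x - \theta y = (x+y) - y\varpi = -y\varpi\bigl(1 - (x+y)/(y\varpi)\bigr).
\]
Since $v_{\varpi}(x+y) \geq 3$, the last factor is $\equiv 1 \pmod{\varpi^{2}}$. Hence $\kappa(x-\theta y) = \kappa(-y) + \kappa(\varpi) = 0 + 4$.

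For the denominator use \cref{eq:varpi}. Since $\theta \equiv -1 \pmod{\varpi}$,
\[
  3 - 2\varpi/\theta \equiv 3 + 2\varpi = 3(1 + 3\varpi) \pmod{\varpi^{2}},
\]
so $\kappa(f'(\theta)) = 2\cdot 4 + 3 = 11 = 4$. The difference is $0$. This bookkeeping of the expansion modulo $\varpi^{2}$ is the only delicate point, and I expect it to be the main place where errors could creep in.

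\emph{Part \eqref{item:in-image}: the class lies in $\Sigma_2$.} Put $g = (x-\theta y)/f'(\theta)$. As in the proof of \cref{prop:bruin}, the ideals $\sigma^{i}(x-\theta y)$ are pairwise coprime away from $2$ and $7$. By hypothesis each of them therefore has all valuations at primes outside $\{2,7\}$ divisible by $7$. Since $K$ has class number one, $x - \theta y = 2^{e}u\lambda^{7}$ with $u \in \OO_K[1/7]^{\times}$ and $\lambda \in K^{\times}$; here $2$ is inert, hence rational up to units. Together with $f'(\theta) \in \Q^{\times}\OO_K[1/7]^{\times}$ from \cref{eq:fprime}, the class $[g] \in K^{\times}/\Q^{\times}K^{\times 7}$ lies in $\Sigma$.

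Since $\kappa$ is well defined on $\Sigma$ (\cref{lem:kappa}) and vanishes on $[g]$ by part \eqref{item:kappa-zero}, \cref{lem:V}\eqref{item:kernel} gives $[g] \in \Sigma_2 = (3+\sigma)\Sigma$. Hence $g = r\lambda^{7}\mu^{3}\sigma(\mu)$ with $r \in \Q^{\times}$. Finally, \cref{eq:seventh-powers} rewrites $\lambda^{7} = \Norm(\lambda)\,\nu_1^{3}\sigma(\nu_1)$, and multiplicativity of $\nu \mapsto \nu^{3}\sigma(\nu)$ gives the claimed form.

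\emph{Part \eqref{item:Z-is-Klein}: transfer to $\delta$.} We have $\delta/f'(\theta) = g\,\omega^{-7}$, and $\kappa$ kills $\omega^{-7}$, so $\kappa(\delta/f'(\theta)) = 0$ by part \eqref{item:kappa-zero}. The hypothesis of part \eqref{item:in-image} holds because $F(x,y) = kw^{7}$ with $k$ supported on $\{2,7\}$. Then part \eqref{item:in-image}, together with \cref{eq:seventh-powers} applied to $\omega^{-7}$, shows $\delta/f'(\theta) = r\nu^{3}\sigma(\nu)$.

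Now apply \cref{lem:twisting} with $\delta' = \delta$ and base $f'(\theta)$. This gives the isomorphism $[v] \mapsto [\nu v]$ from $Z_{\theta,\delta}$ to $Z_{\theta,f'(\theta)} = Z_0$, compatible with the maps $\varphi$. Finally, \cref{cor:Z0} identifies $Z_0$ with $\calZ$ over $\Q$.
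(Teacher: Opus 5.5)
Your proposal is correct and follows essentially the same route as the paper. The paper uses the same expansions modulo $\varpi^{2}$ to get $\kappa(x-\theta y)=\kappa(f'(\theta))=4$, the same reduction of the non-split case via $(x',y')=(x-5y,-2y)$, and the same passage through $\Sigma$, \cref{lem:V}\eqref{item:kernel}, \cref{eq:seventh-powers} and \cref{lem:twisting}. The only harmless difference is in part \eqref{item:in-image}: you write $[g]=(3+\sigma)[\mu]$ for an arbitrary $\mu$ with $[\mu]\in\Sigma$, whereas the paper invokes \cref{lem:V}\eqref{item:F72} to take $\mu$ a unit.
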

\begin{proof}
  We prove \eqref{item:kappa-zero}. Suppose first that $\bullet = \spc$. By
  \cref{lem:arithmetic-of-forms}\eqref{item:v7-sp}, $x + y = 7s$ for some integer
  $s$, and $7 \nmid y$. By \cref{eq:varpi},
  \[
    x - \theta y = -y\varpi + 7s = \varpi\bigl(-y + s\theta^{-2}\varpi^{2}\bigr),
  \]
  so $\kappa(x - \theta y) = 0 + 4 = 4$. On the other hand $f'(\theta) =
  \varpi^{2}(3 - 2\varpi/\theta)$ by \cref{eq:varpi}, and $3 - 2\varpi/\theta
  \equiv 3 + 2\varpi \equiv 3(1 + 3\varpi) \pmod{\varpi^{2}}$, since $\theta \equiv -1$
  and $2 \cdot 3^{-1} = 3$ in $\F_7$, so $\kappa(f'(\theta))
  = 3 + 8 = 4$ as well. If $\bullet = \ns$, then $7 \mid x$ and $7 \nmid y$, and by
  \cref{eq:thetas} and \cref{eq:fprime},
  \[
    \frac{x - \theta_{\ns}y}{f_{\ns}'(\theta_{\ns})}
    = \frac{1}{4}\cdot\frac{x' - \theta_{\spc}y'}{f_{\spc}'(\theta_{\spc})},
    \qquad (x',y') = (x - 5y, -2y),
  \]
  with $7 \mid x' + y'$ and $7 \nmid y'$; the computation above applies to
  $(x',y')$, and $\kappa(4) = 0$.

  For \eqref{item:in-image}, let $\gamma$ be the element in question. The
  argument in the proof of \cref{prop:bruin} shows that $x - \theta y \in
  \OKS^{\times}K^{\times 7}$ with $\calS = \{2,7\}$. Since $2$ is inert,
  $\OKS^{\times} = 2^{\Z} \times \OO_K[1/7]^{\times}$, and $f'(\theta) \in
  \Q^{\times}\OO_K[1/7]^{\times}$ by \cref{eq:fprime}; hence the class of
  $\gamma$ in $K^{\times}/\Q^{\times}K^{\times 7}$ lies in $\Sigma$. By
  \cref{lem:V}\eqref{item:kernel} this class lies in $(3+\sigma)\Sigma$, and by
  \cref{lem:V}\eqref{item:F72} it is therefore the class of $u^{3}\sigma(u)$ for
  a unit $u$. This means that $\gamma = r\,u^{3}\sigma(u)\lambda^{7}$ with $r \in
  \Q^{\times}$, $u \in \OO_K^{\times}$ and $\lambda \in K^{\times}$. We conclude with
  \cref{eq:seventh-powers}.

  For \eqref{item:Z-is-Klein}, we have $7 \mid F(x,y)$ because $7 \mid k$, and
  $v_p(F(x,y)) = 7v_p(w)$ for every prime $p \notin \{2,7\}$ because $k$
  divides $56$. Since $\kappa$ is trivial on seventh powers (\cref{lem:kappa}),
  \eqref{item:kappa-zero} gives
  \[
    \kappa\left(\frac{\delta}{f'(\theta)}\right)
    = \kappa\left(\frac{x - \theta y}{f'(\theta)}\right) - 7\kappa(\omega)
    = 0 .
  \]
  In other words, the $7$-adic condition depends only on the class of $x -
  \theta y$ in $K^{\times}/K^{\times 7}$, which is the class of $\delta$; so it
  is a condition on $\delta \in \Delta(\theta,k)$. By \eqref{item:in-image} and
  \cref{eq:seventh-powers}, $\delta/f'(\theta) = \bigl((x-\theta y)/f'(\theta)\bigr)
  \omega^{-7}$ lies in $\Q^{\times}\cdot\{\nu^{3}\sigma(\nu) : \nu \in K^{\times}\}$.
  Now \cref{lem:twisting}, applied to $\delta$ and $f'(\theta)$ in place of
  $\delta'$ and $\delta$, gives the isomorphism $Z_{\theta,\delta} \to
  Z_{\theta,f'(\theta)} = Z_0$, and \cref{cor:Z0} gives $Z_0 \cong \calZ$.
\end{proof}

\begin{proof}[Proof of \cref{thm:superelliptic}]
  We first take $(F,k) = (F_{\spc},7)$, so that $\theta = \theta_{\spc}$ as in
  the rest of this section. Let $(x,y,w)$ be a solution of
  \cref{eq:superelliptic-intro}. By \cref{prop:bruin}, $x - \theta y = \delta\omega^7$ with $\delta \in \Delta(\theta,k)$, and
  $[v] \colonequals \psi([\omega])$ is a rational point of $Z_{\theta,\delta}$
  with $\varphi_{\theta,\delta}([v]) = (x:y)$, by \cref{lem:quotient-map}. Since
  $7 \mid k$, \cref{prop:seven-adic}\eqref{item:Z-is-Klein} gives
  $\delta/f'(\theta) = r\,\nu^{3}\sigma(\nu)$ with $r \in \Q^{\times}$ and $\nu
  \in K^{\times}$, and the isomorphism $[v] \mapsto [\nu v]$ from
  $Z_{\theta,\delta}$ to $Z_0$, compatible with the maps to $\Pone$. By \cref{cor:Z0}, $\nu v \in
  \Q^{\times}\sigma^{i}(\alpha)$ for some $i \in \{0,1,2\}$, and therefore
  \[
    x - \theta y \in \Q^{\times}\cdot f'(\theta)\,\sigma^{i}\bigl(\alpha^{3}\sigma(\alpha)\bigr).
  \]
  For $\theta = \theta_{\spc}$ one computes
  \begin{equation}
    \label{eq:certificates}
    f'(\theta)\,\sigma^{i}\bigl(\alpha^{3}\sigma(\alpha)\bigr) =
    -7^{3}(1 + \theta),\quad 7^{3}(5 - 2\theta),\quad -7^{3}(4 - 3\theta)
    \qquad (i = 0,1,2).
  \end{equation}
  Hence $(x,y) = \pm(1,-1)$, $\pm(5,2)$ or $\pm(4,3)$, where $F_{\spc}$ takes
  the values $\pm 7$. This proves the theorem for $(F_{\spc},7)$.

  For $F_{\ns}$ we use \cref{lem:arithmetic-of-forms}\eqref{item:substitution}.
  Let $g = \gcd(x - 5y, 2y) \in \{1,2\}$; it equals $2$ exactly when $x$ and
  $y$ are both odd. Then $(x',y') = (x-5y,-2y)/g$ is a primitive pair with
  $g^{3}F_{\spc}(x',y') = kw^7$. Since $F_{\spc}(x',y')$ is odd by
  \cref{lem:arithmetic-of-forms}\eqref{item:v2}, and $v_2(k) \in
  \{0,3\}$, $w$ is odd, $g = 1$ if $k = 7$ and $g = 2$ if $k = 56$, and
  $F_{\spc}(x',y') = 7w^{7}$ in both cases. By the split case, $(x',y')$ is one
  of $\pm(1,-1)$, $\pm(5,2)$, $\pm(4,3)$. If $g = 1$ then $y'$ is even, so
  $(x',y') = \pm(5,2)$ and $(x,y) = \pm(0,1)$. If $g = 2$ then $y = -y'$ and $x
  = 2x' - 5y'$ with $y'$ odd, so $(x,y) = \pm(7,1)$ or $\pm(7,3)$.

  Finally, $j_{\ns}(0) = j_{\spc}(-1) = 0$, while the $j$-invariants of the
  other four points, listed in \cref{tab:candidates}, are not among the thirteen
  rational CM $j$-invariants.
\end{proof}

\begin{example}
  \label{ex:five-two}
  We follow the solution $(x,y,w) = (5,2,-1)$ of $F_{\spc}(x,y) = 7w^{7}$ through
  the proof. Here $x - \theta y = 5 - 2\theta$ has norm $-7$, so it lies in
  $\OO_K[1/7]^{\times}$, and in \cref{prop:bruin} we may take $\delta = 5 - 2\theta$ and
  $\omega = 1$; thus $[1] \in C_{\theta,\delta}(\Q)$, and $\eta = \delta/f'(\theta)$.
  With $\varpi = 1 + \theta$ and $7 = \varpi^{3}\theta^{-2}$,
  \[
    5 - 2\theta = 7 - 2\varpi = \varpi\bigl(-2 + \theta^{-2}\varpi^{2}\bigr),
  \]
  so $\kappa(5 - 2\theta) = 0 + 4 = 4 = \kappa(f'(\theta))$ and $\kappa(\eta) = 0$,
  as \cref{prop:seven-adic} predicts. Indeed, by \cref{eq:certificates} with
  $i = 1$,
  \[
    \eta = \frac{5 - 2\theta}{f'(\theta)} = 7^{-3}\,\sigma(\alpha)^{3}\sigma^{2}(\alpha)
    = r\,\nu^{3}\sigma(\nu), \qquad r = 7^{-3},\quad \nu = \sigma(\alpha).
  \]
  The point $[1] = \psi([1]) \in Z_{\theta,\delta}(\Q)$ is carried by the
  isomorphism $[v] \mapsto [\nu v]$ of \cref{lem:twisting} to $[\sigma(\alpha)]
  \in Z_0(\Q)$, which is the point $(0:1:0)$ of the Klein quartic under
  \cref{prop:klein-identity}. The other two solutions of $F_{\spc}(x,y) = \pm 7$
  correspond to $\sigma(\eta)$ and $\sigma^{2}(\eta)$, that is, to $i = 2$ and
  $i = 0$ in \cref{eq:certificates}.
\end{example}

\begin{remark}
  \label{rem:no-C}
  The curves $C_{\theta,\delta}$ are not strictly needed for this proof: by
  \cref{prop:seven-adic} one can pass directly from a solution to a rational
  point of $Z_0$. We have kept them because they are the natural objects of the
  descent, and because $\psi$ explains where $Z_{\theta,\delta}$ comes from.
  For each of the six pairs $(F,k)$, the $7$ classes $\delta \in \Delta(\theta,k)$
  with $\kappa(\delta/f'(\theta)) = 0$ give the seven coverings of $Z_0 \cong
  \calZ$ by twists of $\calF$, three of which have rational points. By
  \cref{prop:klein-twists} below, these are the only $\delta \in \Delta(\theta,k)$
  for which $Z_{\theta,\delta}$ is isomorphic to $\calZ$ over $\Q$.
\end{remark}

For $\eta \in K^{\times}$ we write $Z(\eta) \colonequals \{[v] \in
\PP(\calV_K) : \Tr(\eta\,v^{3}\sigma(v)) = 0\}$ for the plane quartic of
\cref{rem:intrinsic}, so that $Z_{\theta,\delta} = Z(\delta/f'(\theta))$ for
both roots $\theta$, and $Z_0 = Z(1)$. The following proposition, which is not
needed for the proof of \cref{thm:superelliptic}, shows that
\cref{prop:seven-adic}\eqref{item:Z-is-Klein} cannot be improved: the $7$-adic
condition is exactly the condition for $Z_{\theta,\delta}$ to be the Klein
quartic.

\begin{proposition}
  \label{prop:klein-twists}
  Let $(F,k)$ be one of the six pairs of \cref{tab:descent-data}, let $\theta$
  be the corresponding root, let $\delta \in \Delta(\theta,k)$, and let $\beta
  \colonequals \varepsilon_1\varepsilon_2^{4}$.
  \begin{enumerate}
  \item \label{item:klein-iff} The curve $Z_{\theta,\delta}$ is isomorphic to
    the Klein quartic $\calZ$ over $\Q$ if and only if
    $\kappa(\delta/f'(\theta)) = 0$. This holds for exactly $7$ of the $49$
    elements of $\Delta(\theta,k)$.
  \item \label{item:two-twists} If $\kappa(\delta/f'(\theta)) \in \{1,2,4\}$,
    then $Z_{\theta,\delta}$ is isomorphic to $Z(\beta)$ over $\Q$, and if
    $\kappa(\delta/f'(\theta)) \in \{3,5,6\}$, then it is isomorphic to
    $Z(\beta^{3})$. The three curves $\calZ$, $Z(\beta)$ and $Z(\beta^{3})$ are pairwise
    non-isomorphic over $\Q$.
  \end{enumerate}
\end{proposition}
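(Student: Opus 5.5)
We have to show that $\kappa$ classifies the quartics $Z_{\theta,\delta}$ up to $\Q$-isomorphism. Write $\eta = \delta/f'(\theta)$. By \cref{lem:one-family}\eqref{item:bijection}, $\delta \mapsto [\eta]$ is a bijection $\Delta(\theta,k) \to \Sigma$. By \cref{lem:twisting}, $Z(\eta)$ depends only on the image of $[\eta]$ in $\Sigma/(3+\sigma)\Sigma$. By \cref{lem:V}\eqref{item:kernel}, $(3+\sigma)\Sigma = \Sigma_2 = \ker\kappa$, so $\kappa$ induces an isomorphism $\Sigma/(3+\sigma)\Sigma \cong \F_7$.

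Hence $Z_{\theta,\delta}$ is determined by $\kappa(\eta) \in \F_7$, and each value is taken by exactly $7$ of the $49$ classes. The value $0$ gives $Z_0 \cong \calZ$ by \cref{cor:Z0}; this proves the count in \eqref{item:klein-iff} and the implication ``$\Leftarrow$''.

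Next I would reduce the six nonzero values to two. The isomorphism $[v] \mapsto [\sigma(v)]$ is defined over $\Q$, since $\sigma$ is a $\Q$-linear automorphism of $\calV_K$. It satisfies
\[
  \Tr(\eta\,\sigma(v)^3\sigma^2(v)) = \Tr(\sigma^{-1}(\eta)\,v^3\sigma(v)),
\]
so it carries $Z(\sigma^{-1}(\eta))$ onto $Z(\eta)$. Thus $Z(\eta) \cong Z(\sigma(\eta))$, and since $\kappa\circ\sigma = 4\kappa$, the value $c = \kappa(\eta)$ may be multiplied by powers of $4$. The orbits of $\langle 4\rangle$ on $\F_7^\times$ are $\{1,4,2\}$ and $\{3,5,6\}$. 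Finally, $\kappa(\beta) = 4$ and $\kappa(\beta^3) = 12 = 5$, computed in \cref{lem:V}, so these two orbits give $Z(\beta)$ and $Z(\beta^3)$ respectively. This proves the first part of \eqref{item:two-twists}.

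The main obstacle is the non-isomorphism of $\calZ$, $Z(\beta)$ and $Z(\beta^3)$ over $\Q$, which also gives the implication ``$\Rightarrow$'' in \eqref{item:klein-iff}.

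My first attempt would use local invariants: count points on reductions modulo small primes $p \nmid 14$ where all three have good reduction. Isomorphic curves over $\Q$ have the same $\#Z(\F_{p^r})$. For instance, at $p$ inert in $K$ (so $p \equiv 2,3,4,5 \pmod 7$), I expect the three twists to have different point counts over $\F_p$ or $\F_{p^3}$. I would check this in \Magma{}, searching $p = 2,3,5,11,\dots$ (for $p = 2$ one must check good reduction of the explicit models). If point counts happen to agree, I would instead compare local solvability, as in \cref{rem:elkies-map}, or compare the twisting cocycles in $H^1(\Q,\Aut(\calZ))$, where $\Aut(\calZ) \cong \PSL_2(\F_7)$.

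In that last approach, the twist class of $Z(\eta)$ relative to $Z_0$ is the image of the Kummer class of $\eta$ in $H^1(\Q, \mu_7\text{-twist})$. One would show that its restriction to a decomposition group at $7$ is nontrivial modulo the image of the normalizer. A simpler certificate is probably the point-count distinction, which I would present as a computational check.
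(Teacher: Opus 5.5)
Your reduction to $\kappa$ is correct and is the paper's argument. It uses the bijection $\Delta(\theta,k)\to\Sigma$, the identity $\ker(\kappa|_\Sigma)=(3+\sigma)\Sigma$ combined with \cref{lem:twisting}, and the $\Q$-isomorphism $Z(\eta)\cong Z(\sigma(\eta))$ together with $\kappa\circ\sigma=4\kappa$, with $\kappa(\beta)=4$ and $\kappa(\beta^3)=5$. The gap is in the one non-formal step: the pairwise non-isomorphism of $\calZ$, $Z(\beta)$, $Z(\beta^3)$, which is also what gives ``$\Rightarrow$'' in (1). The primes you single out cannot separate these curves, your expectation for them is false, and the local-solvability fallback fails at the same primes.

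Here is why. For a unit $\eta$, the $\Qbar$-isomorphism $\sfv_i\mapsto\xi_i^{3}\xi_{i-1}\sfv_i$ with $\xi_i^7=\sigma^i(\eta)$ from $Z(\eta)$ to the Klein quartic gives a twisting cocycle relative to $Z_0$. It takes values in the order-$7$ group $D$ of diagonal automorphisms $(\sfv_0:\sfv_1:\sfv_2)\mapsto(\zeta_7\sfv_0:\zeta_7^{4}\sfv_1:\zeta_7^{2}\sfv_2)$, and it is unramified at every $p\neq 7$. Tracking how Galois both permutes the coordinates and acts on $\zeta_7$, one finds that $\Frob_p$ acts on $D$ as multiplication by $p^{-1}$. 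Hence $H^1_{\mathrm{nr}}(\Q_p,D)=D/(\Frob_p-1)D$ vanishes unless $p\equiv 1\pmod 7$.

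So for every prime $p\neq 7$ with $p\not\equiv 1\pmod 7$ (all primes inert in $K$, and also $p=13$), the three quartics are isomorphic over $\Q_p$. With good reduction, their reductions are then isomorphic, so they have the same number of points over every $\F_{p^r}$, not just $\F_p$ and $\F_{p^3}$. For example, at $p=2$ the map $x\mapsto x^{3}\sigma(x)=x^{5}$ is bijective on $\F_8^\times$. This matches \cref{rem:klein-twists}, which records equal $\F_p$-counts for all $p<29$, $p\neq 7$. For the same reason the curves are $\Q_p$-isomorphic to $\calZ$ at these primes, so local solvability cannot tell them apart there either. Your $7$-adic cocycle idea is not carried out.

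The fix is to use a prime $p\equiv 1\pmod 7$. The paper takes $p=29$, the smallest such prime. There the primitive integral models are smooth, with $24$, $45$ and $17$ points over $\F_{29}$. It concludes via finiteness of the scheme of isomorphisms between smooth proper curves of genus $\geq 2$ over $\Z_{29}$ (Deligne--Mumford). That same fact is what your sentence ``isomorphic curves over $\Q$ have the same $\#Z(\F_{p^r})$'' needs as justification.
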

\begin{proof}
  Put $\eta \colonequals \delta/f'(\theta)$. By
  \cref{lem:one-family}\eqref{item:bijection}, the class of $\eta$ lies in
  $\Sigma$, and $\delta \mapsto [\eta]$ is a bijection from $\Delta(\theta,k)$ to
  $\Sigma$. By \cref{lem:V}\eqref{item:V-structure}, we can write $[\eta] =
  s[\varepsilon_1\varepsilon_2^{2}] + c[\beta]$ in additive notation, with $s, c
  \in \F_7$, and then $\kappa(\eta) = 4c$, since $\kappa(\varepsilon_1\varepsilon_2^{2})
  = 0$ and $\kappa(\beta) = 4$. In particular $\kappa(\eta) = 0$ for exactly $7$
  of the $49$ classes. Since $[\varepsilon_1\varepsilon_2^{2}]$ lies in
  $\Sigma_2 = (3+\sigma)\Sigma$, the argument of the proof of
  \cref{prop:seven-adic}\eqref{item:in-image} gives $\eta = r\,\nu^{3}\sigma(\nu)\,\beta^{c}$ with $r
  \in \Q^{\times}$ and $\nu \in K^{\times}$, so $Z_{\theta,\delta} =
  Z(\eta) \cong Z(\beta^{c})$ over $\Q$, by \cref{lem:twisting}. If $c = 0$, this
  is $Z(1) = Z_0 \cong \calZ$, by \cref{cor:Z0}.

  Next, for every $\eta' \in K^{\times}$ the $\Q$-linear automorphism $v
  \mapsto \sigma(v)$ of $\calV_K$ induces an isomorphism $Z(\eta') \to
  Z(\sigma(\eta'))$ over $\Q$, because
  $\Tr\bigl(\sigma(\eta')\,\sigma(v)^{3}\sigma^{2}(v)\bigr) =
  \Tr\bigl(\eta'\,v^{3}\sigma(v)\bigr)$. By \cref{lem:V}\eqref{item:V-structure},
  $\sigma[\beta] = 4[\beta]$, so $\sigma(\beta^{c})$ and $\beta^{4c}$ have the same class
  in $\Sigma$; hence $Z(\beta^{c}) \cong Z(\sigma(\beta^{c})) \cong Z(\beta^{4c})$, the
  second isomorphism by \cref{eq:seventh-powers} and \cref{lem:twisting}. Since $4$ has order $3$ in $\F_7^{\times}$, the curves
  $Z(\beta^{c})$ with $c \in \{1,4,2\}$ are isomorphic to $Z(\beta)$, and those with $c
  \in \{3,5,6\}$ to $Z(\beta^{3})$; these values of $c$ correspond to $\kappa(\eta)
  = 4c \in \{4,2,1\}$ and $\{5,6,3\}$ respectively.

  It remains to show that $Z(1)$, $Z(\beta)$ and $Z(\beta^{3})$ are pairwise
  non-isomorphic. In the coordinates $\sfa_0 + \sfa_1\theta_{\spc} +
  \sfa_2\theta_{\spc}^{2}$, the curve $Z(\eta')$ with $\eta' \in \OO_K$ is defined
  by the form $\Tr(\eta'\,\sfu_0^{3}\sfu_1) \in \Z[\sfa_0,\sfa_1,\sfa_2]$; divide it by the
  greatest common divisor of its coefficients. A computation in \Magma{} shows
  that for $\eta' \in \{1, \beta, \beta^{3}\}$ these three primitive quartic forms
  define smooth curves over $\F_{29}$, with $24$, $45$ and $17$ points over
  $\F_{29}$ respectively. So the three curves have good reduction at $29$. An
  isomorphism over $\Q_{29}$ between the generic fibers of two smooth proper
  curves of genus at least $2$ over $\Z_{29}$ extends to an isomorphism over $\Z_{29}$, because the scheme
  of isomorphisms between them is finite, hence proper, over $\Z_{29}$
  \cite{DM-69}*{Theorem 1.11}; reducing it modulo $29$ gives an isomorphism of the
  reductions, which is impossible since they have different numbers of points.
  This proves \eqref{item:two-twists}, and \eqref{item:klein-iff} follows:
  $Z_{\theta,\delta} \cong \calZ$ forces $c = 0$, that is, $\kappa(\eta) = 0$.
\end{proof}

\begin{remark}
  \label{rem:klein-twists}
  The isomorphism $Z_{\theta,\delta} \cong Z_0$ for $\kappa(\delta/f'(\theta))
  = 0$ is compatible with the maps $\varphi$ to $\Pone$, but the isomorphisms
  $Z(\beta^{c}) \cong Z(\beta^{4c})$ induced by $\sigma$ are not: they carry
  $\varphi_{\theta,\delta}$ to $[v] \mapsto [\sigma(\delta)\,v^{3}\sigma(v)]$,
  whose target is $\PP(\Q + \Q\sigma(\theta))$. So \cref{prop:klein-twists}
  classifies the quartics, not the coverings. The rational points of $Z(\beta)$ and
  $Z(\beta^{3})$ are not determined in this paper. The three curves have good
  reduction and the same number of points over $\F_p$ for every prime $p < 29$
  other than $7$, so $29$ is the smallest prime that can be used in the proof.
  (It is also the smallest prime $p \equiv 1 \pmod 7$.)
\end{remark}

\section{Proof of \texorpdfstring{\cref{thm:modular-curves}}{Theorem 2}}
\label{sec:proof-thm-2}

In this section we show that the eight non-CM points of $X_{\ns}^{+}(7)$ and $X_{\spc}^{+}(7)$ resulting from \cref{thm:superelliptic} and \cite{Furio--Lombardo26}
do not lift to rational points of $X_{\ns}^{\sharp}(49)$ or $X_{\spc}^{\sharp}(49)$.

\subsection{A Frobenius criterion}
\begin{lemma}
  \label{lem:criterion-groups}
  Let $\bullet \in \{\ns,\spc\}$, and let $g \in G^{\sharp}$
  be such that $7 \nmid \operatorname{tr}(g)\bigl(\operatorname{tr}(g)^2 -
  4\det g\bigr)$. Then $g^{48}$ is a scalar matrix.
\end{lemma}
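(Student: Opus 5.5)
Let $\bar g \in C^{+}$ denote the reduction of $g$ modulo $7$. The plan is to show first that $\bar g$ lies in the Cartan subgroup $C$ itself, so that $\bar g^{48} = I$. Then we show that $g^{48} = I + 7A$ with $A$ in $V \cap \mathfrak{c}$, which by \cref{lem:groups}\eqref{item:V} consists of scalars.

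\emph{Step 1: $\bar g \in C$.} The elements of $C^{+} \setminus C$ have trace zero. In the split case they are $\begin{psmallmatrix} 0 & b \\ c & 0\end{psmallmatrix}$. In the non-split case they are $\begin{psmallmatrix} 1 & 0 \\ 0 & -1\end{psmallmatrix}\begin{psmallmatrix} a & \varepsilon b \\ b & a\end{psmallmatrix}$, whose trace is $a - a = 0$. Since $7 \nmid \operatorname{tr}(g)$, we get $\bar g \in C$. Now $C_{\spc}(7) \cong (\F_7^{\times})^2$ has exponent $6$, and $C_{\ns}(7) \cong \F_{49}^{\times}$ has order $48$. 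In both cases $\bar g^{48} = I$, so $g^{48} \in G^{\sharp} \cap N = I + 7V$. Write $g^{48} = I + 7A$ with $A \in V$.

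\emph{Step 2: $A \in \mathfrak{c}$.} The hypothesis $7 \nmid \operatorname{tr}(g)^2 - 4\det g$ says that $\bar g$ has distinct eigenvalues. Hence $\bar g$ is regular semisimple in $M_2(\F_7)$, and its centralizer in $M_2(\F_7)$ is $\F_7[\bar g]$. Since $\bar g \in \mathfrak{c}$ is non-scalar and $\mathfrak{c}$ has dimension $2$, this centralizer equals $\mathfrak{c}$. The matrix $g^{48}$ commutes with $g$. Writing out $g(I + 7A) = (I + 7A)g$ modulo $49$ gives $7(gA - Ag) \equiv 0 \pmod{49}$, that is, $\bar g A = A \bar g$ in $M_2(\F_7)$. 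So $A$ lies in the centralizer of $\bar g$, which is $\mathfrak{c}$. Combined with $A \in V$, we get $A \in V \cap \mathfrak{c} = \F_7 I$ by \cref{lem:groups}\eqref{item:V}. Therefore $g^{48} = I + 7aI$ is scalar.

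\emph{Where care is needed.} I expect the only delicate point to be the identification of the centralizer of $\bar g$ with $\mathfrak{c}$. This requires $\bar g$ to be non-scalar, which is exactly what the discriminant condition provides: a scalar matrix has $\operatorname{tr}^2 - 4\det = 0$. The requirement $7 \nmid \operatorname{tr}(g)$ is used only to exclude the non-identity coset of $C$ in $C^{+}$.
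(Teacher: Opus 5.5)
Your proof is correct and follows essentially the same route as the paper. In both, the trace condition places $\bar g$ in $C$, the separable characteristic polynomial makes the centralizer of $\bar g$ equal to $\mathfrak{c}$, and $g^{48} = I + 7A$ with $A \in V \cap \mathfrak{c}$ forces $g^{48}$ to be scalar. You also spell out two points the paper leaves implicit: why $\bar g^{48} = I$ (the exponent of $C$), and the commutation computation modulo $49$.
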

\begin{proof}
  Let $\bar g \in C^{+}$ be the reduction of $g$. The elements of
  $C^{+}$ outside the Cartan subgroup $C$ have trace
  zero, so $\bar g \in C$; and
  $\bar g$ is not scalar, because its characteristic polynomial is separable.
  Hence the centralizer of $\bar g$ in $M_2(\F_7)$ is $\F_7[\bar g]$, a
  subalgebra of dimension $2$ of $\mathfrak{c}$, that is, $\mathfrak{c}$
  itself. Since $\bar g^{48} = 1$, we have $g^{48} = I + 7A$
  with $A \in M_2(\F_7)$; this element commutes with $g$, so $A \in
  \mathfrak{c}$, and it lies in $G^{\sharp}$, so $A \in V$. Finally,
  $V \cap \mathfrak{c}$ consists of the scalar matrices by
  \cref{lem:groups}\eqref{item:V}.
\end{proof}

\begin{corollary}
  \label{cor:criterion}
  Let $E$ be an elliptic curve over $\Q$ such that $\rho_{E,49}(\Gal_\Q)$ is
  conjugate to a subgroup of $G_{\ns}^{\sharp}(49)$ or of
  $G_{\spc}^{\sharp}(49)$. Then for every prime $p \neq 7$ of good reduction such
  that $7 \nmid a_p(E)\bigl(a_p(E)^2 - 4p\bigr)$, the matrix
  \[
    \begin{pmatrix*}[r] 0 & -p \\ 1 & a_p(E) \end{pmatrix*}^{48} \in M_2(\Z/49)
  \]
  is scalar. This condition is invariant under quadratic twists of $E$.
\end{corollary}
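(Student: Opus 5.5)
The plan is to reduce to \cref{lem:criterion-groups} applied to $g = \rho_{E,49}(\Frob_p)$, and then to replace $g$ by the companion matrix using conjugacy.

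First, fix a basis of $E[49](\Qbar)$ in which $\rho_{E,49}(\Gal_\Q) \subseteq G^{\sharp}$ for the relevant $\bullet$. Let $p \neq 7$ be a prime of good reduction. Then $\rho_{E,49}$ is unramified at $p$, and $g \colonequals \rho_{E,49}(\Frob_p) \in G^{\sharp}$ is well defined up to conjugation in $G^{\sharp}$. It has $\operatorname{tr}(g) \equiv a_p(E)$ and $\det g \equiv p \pmod{49}$, by the Weil pairing and the characteristic polynomial of Frobenius. The hypothesis $7 \nmid a_p(a_p^2 - 4p)$ is then exactly the hypothesis of \cref{lem:criterion-groups}, so $g^{48}$ is scalar.

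Next, we show that $g$ is conjugate in $\GL_2(\Z/49)$ to the companion matrix $M = \begin{psmallmatrix*}[r] 0 & -p \\ 1 & a_p\end{psmallmatrix*}$. Since $7 \nmid a_p^2 - 4p$, the reduction $\bar g$ is not scalar mod $7$. Hence there is $v \in (\Z/49)^2$ whose reduction mod $7$ is not an eigenvector of $\bar g$. Then $(v, gv)$ is a basis of $(\Z/49)^2$, because its reduction is a basis of $\F_7^2$ and $\Z/49$ is local. In this basis $g$ acts by $M$, using Cayley--Hamilton: $g^2 v = a_p\, gv - p\, v$. Being scalar is invariant under conjugation, so $M^{48}$ is scalar. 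This step needs only $7 \nmid a_p^2-4p$; the condition $7 \nmid a_p$ is used only through \cref{lem:criterion-groups}, to place $\bar g$ in $C$ rather than in $C^+ \setminus C$.

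Finally, for quadratic twists, let $E^{d}$ be a twist with $p$ a prime of good reduction for both curves. Then $a_p(E^d) = \pm a_p(E)$, so the hypothesis $7 \nmid a_p(a_p^2-4p)$ is unchanged. For the conclusion, the companion matrix $M'$ attached to $-a_p$ is conjugate to $-M$: conjugate by $\operatorname{diag}(1,-1)$, which sends $M$ to $\begin{psmallmatrix*}[r] 0 & p \\ -1 & a_p\end{psmallmatrix*}$, whose negative is $M'$. Hence $M'^{48} = (-M)^{48}$ is conjugate to $M^{48}$, and it is scalar exactly when $M^{48}$ is.

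The only step needing care is the conjugacy of $g$ to the companion matrix over $\Z/49$ rather than over a field. The cyclic-vector argument above handles it, since the reduction mod $7$ is non-scalar.
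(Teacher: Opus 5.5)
Your proposal is correct and follows essentially the same route as the paper. You apply \cref{lem:criterion-groups} to $\rho_{E,49}(\Frob_p)$, use a cyclic vector mod $7$ lifted to $\Z/49$ to conjugate to the companion matrix (the paper's ``Nakayama'' step), and handle quadratic twists by noting that the companion matrix for $-a_p$ is conjugate to the negative of the one for $a_p$. Your explicit basis $(v, gv)$ and your conjugation by $\operatorname{diag}(1,-1)$ simply fill in details that the paper states in one line each.
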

\begin{proof}
  The matrix $g = \rho_{E,49}(\Frob_p)$ has characteristic polynomial $\t^2 -
  a_p(E)\t + p$, which is separable modulo $7$. So $g$ has a cyclic vector
  modulo $7$, hence (by Nakayama's lemma) over $\Z/49$, and it is conjugate to
  the companion matrix of its characteristic polynomial. Now apply
  \cref{lem:criterion-groups}. A quadratic twist replaces $a_p(E)$ by $\pm
  a_p(E)$, and the companion matrix by a conjugate of its negative.
\end{proof}

The criterion is far from vacuous: in the full preimage of $C^{+}_{\ns}(7)$ in
$\GL_2(\Z/49)$, and likewise in that of $C^{+}_{\spc}(7)$, exactly $6/7$ of the
elements $g$ with $7 \nmid \operatorname{tr}(g)(\operatorname{tr}(g)^2 - 4\det g)$
violate it. For instance, an elliptic curve with the $j$-invariant of the point
$t = 5/2$ of \cref{tab:candidates} has $a_{11} = \pm 2$, and
\[
  \begin{pmatrix*}[r] 0 & -11 \\ 1 & 2 \end{pmatrix*}^{48}
  \equiv \begin{pmatrix*}[r] 43 & 42 \\ 14 & 22 \end{pmatrix*} \pmod{49}
\]
is not scalar.

\subsection{The candidates}
\Cref{tab:candidates} lists the non-CM points that occur in the proof. For each
of them we give the first prime $p$ at which the criterion of
\cref{cor:criterion} fails, together with $\lvert a_p\rvert$, for an elliptic
curve with the given $j$-invariant and good reduction at $p$. Such a curve
exists: in every row $j$ is $p$-integral and $j \not\equiv 0, 1728 \pmod p$, so
the reduction has automorphism group $\{\pm 1\}$, inertia at $p$ acts through
it, and some quadratic twist has good reduction at $p$. Its $a_p$ is determined
up to sign by $j$, and by \cref{cor:criterion} the outcome does not depend on
the sign.

\begin{table}[ht]
  \centering
  \tablesetup
  \begin{tabular}{IlIc:l:cI}
    \toprule
    \headrow
    curve & $t$ & $j$ & $(p,\lvert a_p\rvert)$ \\
    \headrule
    $X_{\ns}^{+}(7)$ & $7$ & $2^{3}\cdot 5^{3}\cdot 7^{5}$ & $(3,1)$ \\
    \rowsep
    $X_{\ns}^{+}(7)$ & $7/3$ & $2^{15}\cdot 7^{5}$ & $(3,2)$ \\
    \midrule
    $X_{\spc}^{+}(7)$ & $5/2$ & $3^{3}\cdot 5\cdot 7^{5}/2^{7}$ & $(11,2)$ \\
    \rowsep
    $X_{\spc}^{+}(7)$ & $4/3$ & $-2^{8}\cdot 5^{3}\cdot 7^{5}\cdot 37^{3}/3^{7}$ & $(13,2)$ \\
    \midrule
    $X_{\spc}^{+}(7)$ & $3/2$ & $3\cdot 5^{3}\cdot 11^{3}\cdot 17^{3}\cdot 43^{3}/2^{7}$ & $(23,4)$ \\
    \rowsep
    $X_{\spc}^{+}(7)$ & $-1/4$ & $3^{3}\cdot 37^{3}\cdot 149^{3}\cdot 2389^{3}/2^{14}$ & $(5,1)$ \\
    \rowsep
    $X_{\spc}^{+}(7)$ & $13/9$ & $-2^{18}\cdot 5^{3}\cdot 11^{3}\cdot 13\cdot 29^{3}\cdot 37^{3}\cdot 67^{3}\cdot 127^{3}/3^{14}$ & $(17,3)$ \\
    \rowsep
    $X_{\spc}^{+}(7)$ & $14/5$ & $2^{4}\cdot 3^{3}\cdot 7^{4}\cdot 19^{3}\cdot 23^{3}\cdot 43^{3}\cdot 163^{3}/5^{7}$ & $(31,10)$\\
    \bottomrule
  \end{tabular}
  \caption{Non-CM candidates, and a prime at which \cref{cor:criterion} fails.
    The first four rows come from \cref{thm:superelliptic}; the last four from
    the solutions of $F_{\spc}(x,y) = \pm 1$.}
  \label{tab:candidates}
\end{table}

\begin{proof}[Proof of \cref{thm:modular-curves}]
  Let $X$ be one of the two curves and $G$ the corresponding group. Let $P \in
  X(\Q)$ be a non-cuspidal point which is not CM. Then $j(P) \notin \{0,1728\}$,
  and since $-I \in G$ there is an elliptic curve $E$ over $\Q$ with $j(E) =
  j(P)$, without CM, such that $\rho_{E,49}(\Gal_\Q)$ is conjugate to a subgroup
  of $G$; see \cite{RSZB-22}*{Section 2.3}. Since $-I \in G$, the same holds
  for every quadratic twist of $E$; when we apply \cref{cor:criterion} at a
  prime $p$ below, we take $E$ to be a twist with good reduction at $p$, as
  explained before \cref{tab:candidates}. \Cref{prop:FL} provides coprime
  integers $x,y$ and an integer $w$ with $j(E) = j_{\bullet}(x/y)$ and $F_{\bullet}(x,y)
  = kw^{7}$.

  If $7 \mid k$, then by \cref{thm:superelliptic} the point $t = x/y$ is one of
  $0, 7, 7/3$ (non-split case) or $-1, 5/2, 4/3$ (split case). Since $j_{\ns}(0)
  = j_{\spc}(-1) = 0$, the point $t$ is one of the first four entries of
  \cref{tab:candidates}, and \cref{cor:criterion} gives a contradiction.

  If $7 \nmid k$, then $X = X_{\spc}^{\sharp}(49)$ and $k = 1$. By
  \cite{Furio--Lombardo26}*{Corollary 3.3(1), Theorem 3.5(1)} (see the proof of
  \cite{Furio--Lombardo26}*{Corollary 3.7}), $w = \pm 1$, so $(x,y)$ is a solution of
  the Thue equation $F_{\spc}(x,y) = \pm 1$. Its solutions, up to sign, are
  \begin{equation}
    \label{eq:thue-solutions}
    (1,0),\ (0,1),\ (1,1),\ (2,1),\ (3,1),\ (3,2),\ (1,-4),\ (13,9),\ (14,5),
  \end{equation}
  as one finds with the Thue solver of \Magma{}. The points $t = \infty, 0, 1,
  2, 3$ are the cusp and CM points with discriminants $-3, -19, -12, -27$.
  The remaining four are the last four entries of \cref{tab:candidates}, and
  \cref{cor:criterion} gives a contradiction again.

  It remains to show that a CM point $P \in X(\Q)$ has $j(P) = 0$. Suppose first
  that $j(P) = 1728$. Then, since $-I \in G$, some elliptic curve $E$ over $\Q$
  with $j(E) = 1728$ has $\rho_{E,49}(\Gal_\Q)$ conjugate to a subgroup of $G$
  \cite{RSZB-22}*{Section 2.3, Remark 2.1, and Section 12.2, Proposition 12.1}. Such an $E$
  is the twist of $E_1\colon y^2 = x^3 - x$ by a character $\chi$ with values
  in $\Aut(E_1) = \Z[i]^{\times}$, so that $\rho_{E,49} = \rho_{E_1,49}\chi$
  on $\Gal_{\Q(i)}$. Let $\tau \in \Gal_{\Q(i)}$ be a Frobenius element at a
  prime above $5$, which splits in $\Z[i]$. Both $\rho_{E_1,49}(\tau)$ and
  $\chi(\tau)$ lie in the Cartan subgroup $(\Z[i]/49)^{\times}$, so
  $g \colonequals \rho_{E,49}(\tau)$ is the image of $\pi\chi(\tau)$ for an
  element $\pi \in \Z[i]$ of norm $5$, and its characteristic polynomial is
  $\t^2 - a\t + 5$ with $a \in \{\pm 2, \pm 4\}$. Then $7 \nmid a(a^2 - 20)$,
  so \cref{lem:criterion-groups} says that $g^{48}$ is scalar; but $g$ is
  conjugate to the companion matrix of $\t^2 - a\t + 5$, as in the proof of
  \cref{cor:criterion}, and for each of these values of $a$ the $48$th power
  of that matrix is not scalar modulo $49$. So $j(P) \neq 1728$, and if
  $j(P) \neq 0$ then $P$ comes from an elliptic curve $E$ over $\Q$ as in the
  first paragraph of this proof,
  all of whose twists are quadratic. In the non-split case,
  \cite{Furio--Lombardo26}*{Lemma 2.11} applied to $E$ shows that $7 \mid j(P)$, and $0$
  is the only one of the thirteen rational CM $j$-invariants that is divisible
  by $7$. In the split case, write $j(P) = j_{\spc}(x/y)$ with $x,y$ coprime.
  Since $j(P)$ is an integer, \cref{lem:arithmetic-of-forms}\eqref{item:gcd}
  shows that $yF_{\spc}(x,y)$ is, up to sign, a power of $7$. If $7 \mid y$,
  then $7 \nmid xH_{\spc}(x,y)$ and $7 \nmid F_{\spc}(x,y)$, because
  $F_{\spc} \equiv \x^{3}$ and $H_{\spc} \equiv \x^{9} \pmod{\y}$, so
  $j(P) = xH_{\spc}(x,y)^{3}/(yF_{\spc}(x,y))^{7}$ is not an integer; hence
  $y = \pm 1$, and $F_{\spc}(x,y) \in \{\pm 1, \pm 7\}$ by
  \cref{lem:arithmetic-of-forms}\eqref{item:v7}. The primitive solutions of
  these Thue equations are, up to sign, the nine pairs in \cref{eq:thue-solutions} together with
  $(1,-1)$, $(5,2)$ and $(4,3)$; the CM points among these twelve are $t = 0, -1$,
  where $j = 0$, and $t = 1, 2, 3$, with discriminants $-19, -12, -27$. For $t
  \in \{1,2,3\}$ the criterion of \cref{cor:criterion} fails at
  $(p,\lvert a_p\rvert) = (5,1)$, $(13,2)$ and $(13,5)$ respectively.
\end{proof}

\begin{remark}
  \label{rem:without-2.11}
  In the non-split case, the part of the proof above that treats non-CM points
  uses \cite{Furio--Lombardo26}*{Lemma 2.11} (through \cref{prop:FL}) to force $7 \mid k$.
  This can be avoided. If $7 \nmid k$, then
  $k \in \{1,8\}$ and \cite{Furio--Lombardo26}*{Theorem 3.5(1)} gives $w = \pm 1$, so $t
  = x/y$ is one of Kenku's twelve integral points of $X_{\ns}^{+}(7)$
  \cite{Kenku-85}, namely
  \[
    t \in \{0,\ \infty,\ 1,\ -1,\ 2,\ 3,\ 5,\ -3/5,\ 7,\ 7/3,\ 11/2,\ 19/9\};
  \]
  see also
  \cite{Elkies-99}*{Section 4.3}. Eight of them are CM, two are in
  \cref{tab:candidates}, and the remaining two, $t = 11/2$ and $t = 19/9$, with
  \[
    j = 2^{6}\cdot 11^{3}\cdot 23^{3}\cdot 149^{3}\cdot 269^{3}
    \qquad\text{and}\qquad
    j = 2^{9}\cdot 17^{6}\cdot 19^{3}\cdot 29^{3}\cdot 149^{3},
  \]
  violate the criterion of \cref{cor:criterion} at $(p,\lvert a_p\rvert) = (3,1)$
  and $(3,2)$ respectively.
\end{remark}

\begin{remark}
  The first point in the split part of \cref{tab:candidates}, $j = 3^3\cdot
  5\cdot 7^5/2^7$ at $t = 5/2$, is the exceptional $j$-invariant of level $7$ of
  \cite{Zywina-15}*{Theorem 1.5(iii)}, the constant function $J_1$ there. The proof
  of \cite{Furio--Lombardo26}*{Corollary 3.7} excludes the candidates with Zywina's algorithm
  \cite{Zywina-22} instead of \cref{cor:criterion}.
\end{remark}

\section{Origin of the ideas and use of AI}
\label{sec:ai}

This section records how the ideas in this paper arose, and what AI models
contributed to it (see the AI use disclosure in the introduction). All of the
work has been checked by the authors.

Our starting point was the work of Furio and Lombardo
\cite{Furio--Lombardo26}, who, through a careful analysis of the ramification
of the $j$-maps, reduced the difficult problem of determining the rational
points on the genus $9$ curves $X_{\ns}^{\sharp}(49)$ and
$X_{\spc}^{\sharp}(49)$ to that of solving the superelliptic equations
$F(x,y) = kw^7$ in integers.

Our approach was inspired by Kummer's proof of Fermat's Last Theorem for
regular primes \cite{Kummer-1850}, as presented by Conrad \cite{Conrad-FLT}.
It led us to the curves $C_{\theta,\delta}$, and to rediscover the work of
Bruin \cites{Bruin02-thesis,Bruin06-nn2}, who uses the same approach to
parametrize the solutions of certain spherical generalized Fermat equations. We
first worked with the curves $C_{\theta,\delta}$ directly over $\Q$. We found a
map from each of them to a twist of the Klein quartic by computing their zeta
functions at several primes and matching the resulting $L$-factors with those
of curves of genus $3$, using the LMFDB \cite{LMFDB} among other resources.

We then gave our work to Claude Opus 5, which pointed out that over $K$ the
curves $C_{\theta,\delta}$ are twists of the Fermat septic
(\cref{lem:fermat-twist}), using the intrinsic description
$\PP^2(\Q) = K^{\times}/\Q^{\times}$ of \cref{rem:intrinsic}. This is not
logically necessary for the proof, but we found it a valuable conceptual
clarification, and rewrote part of the paper from this perspective.

At this stage we still had to determine the rational points on $49$ curves of
genus $15$. We knew that they admitted quotients of genus $3$, and observed that those quotients were twists of the Klein quartic. An ad hoc local argument at $7$ reduced the number of twists to around 7. From here, it was clear to us that the approach of Furio--Lombardo would lead to success, and we were in the process of working out those details.

Meanwhile, after a talk by the second author on our partial progress at the
ChaBONNty conference in Bonn,\footnote{Max Planck Institute for Mathematics,
  June 29 to July 3, 2026; \url{https://www.mpim-bonn.mpg.de/ChaBONNty}.} Filip
Najman started exploring this problem independently. He kindly informed us that
he was able to determine the integer solutions of the remaining superelliptic
equations, and that everything reduced to the Klein quartic. This surprised us,
since our computations suggested that some nontrivial twists remained.

Finally, we gave our partial work to Claude Fable 5.1, which produced the
following simplifications.
\begin{itemize}
\item Since $\theta_{\ns} = 5 - 2\theta_{\spc}$, the two roots span, together
  with $1$, the same plane $\Q + \Q\theta$ in $K$. So the two descents involve
  the same curves, and only the coordinate on the target changes
  (\cref{rem:intrinsic}, \cref{lem:one-family}\eqref{item:same-curves},
  \cref{lem:arithmetic-of-forms}\eqref{item:substitution}). This is not
  necessary for the result, but it saved us from duplicating our work.
\item Our original version of the $7$-adic step (\cref{ss:7-adic}) used an ad
  hoc local test that ruled out most of the classes $\delta$; based on our code Claude suggested \cref{prop:seven-adic} as a cleaner formulation of the method.
\item We had misread \cite{Furio--Lombardo26}*{Corollary 3.3}, and were
  duplicating some of their work with our own methods; Claude pointed out that
  several of the cases we were considering were already settled by that
  corollary.
\end{itemize}
Putting everything together, Najman turned out to be right: every twist that
survives the $7$-adic step is isomorphic over $\Q$ to the Klein quartic
(\cref{prop:seven-adic}\eqref{item:Z-is-Klein}), and our results follow from
there.


\newpage
\begin{bibdiv}
\begin{biblist}

\bib{BBHJM-25}{misc}{
      author={Balakrishnan, Jennifer~S.},
      author={Betts, L.~Alexander},
      author={Hast, Daniel~Rayor},
      author={Jha, Aashraya},
      author={M\"{u}ller, J.~Steffen},
       title={Rational points on the non-split {C}artan modular curve of level
  27 and quadratic {C}habauty over number fields},
        date={2025},
        note={Preprint,
  \href{https://arxiv.org/abs/2501.07833v1}{\texttt{arXiv:2501.07833v1}},
  January 14, 2025},
}

\bib{Magma}{article}{
      author={Bosma, Wieb},
      author={Cannon, John},
      author={Playoust, Catherine},
       title={The {M}agma algebra system. {I}. {T}he user language},
        date={1997},
     journal={J. Symbolic Comput.},
      volume={24},
      number={3-4},
       pages={235\ndash 265},
}

\bib{BD-13}{article}{
      author={Bennett, Michael~A.},
      author={Dahmen, Sander~R.},
       title={Klein forms and the generalized superelliptic equation},
        date={2013},
     journal={Ann. of Math. (2)},
      volume={177},
      number={1},
       pages={171\ndash 239},
}

\bib{Bruin02-thesis}{book}{
      author={Bruin, N.~R.},
       title={Chabauty methods and covering techniques applied to generalized
  {F}ermat equations},
      series={CWI Tract},
   publisher={Stichting Mathematisch Centrum, Centrum voor Wiskunde en
  Informatica, Amsterdam},
        date={2002},
      volume={133},
        ISBN={90-6196-508-X},
        note={Dissertation, University of Leiden, Leiden, 1999},
      review={\MR{1916903}},
}

\bib{Bruin06-nn2}{incollection}{
      author={Bruin, Nils},
       title={Some ternary {D}iophantine equations of signature {$(n,n,2)$}},
        date={2006},
   booktitle={Discovering mathematics with {M}agma},
      series={Algorithms Comput. Math.},
      volume={19},
   publisher={Springer, Berlin},
       pages={63\ndash 91},
         url={https://doi.org/10.1007/978-3-540-37634-7_3},
      review={\MR{2278923}},
}

\bib{Conrad-FLT}{misc}{
      author={Conrad, Keith},
       title={Fermat's {L}ast {T}heorem for regular primes},
        note={Expository note,
  \url{https://kconrad.math.uconn.edu/blurbs/gradnumthy/fltreg.pdf}, accessed
  October 2, 2026},
}

\bib{DG-95}{article}{
      author={Darmon, Henri},
      author={Granville, Andrew},
       title={On the equations {$z^m=F(x,y)$} and {$Ax^p+By^q=Cz^r$}},
        date={1995},
     journal={Bull. London Math. Soc.},
      volume={27},
      number={6},
       pages={513\ndash 543},
}

\bib{DM-69}{article}{
      author={Deligne, P.},
      author={Mumford, D.},
       title={The irreducibility of the space of curves of given genus},
        date={1969},
     journal={Inst. Hautes \'Etudes Sci. Publ. Math.},
      number={36},
       pages={75\ndash 109},
}

\bib{DR-73}{incollection}{
      author={Deligne, P.},
      author={Rapoport, M.},
       title={Les sch\'emas de modules de courbes elliptiques},
        date={1973},
   booktitle={Modular functions of one variable, {II} ({P}roc. {I}nternat.
  {S}ummer {S}chool, {U}niv. {A}ntwerp, {A}ntwerp, 1972)},
      series={Lecture Notes in Math.},
      volume={349},
   publisher={Springer, Berlin-New York},
       pages={143\ndash 316},
}

\bib{Elkies-99}{incollection}{
      author={Elkies, Noam~D.},
       title={The {K}lein quartic in number theory},
        date={1999},
   booktitle={The eightfold way},
      series={Math. Sci. Res. Inst. Publ.},
      volume={35},
   publisher={Cambridge Univ. Press, Cambridge},
       pages={51\ndash 101},
}

\bib{Faddeev-61}{article}{
      author={Faddeev, D.~K.},
       title={The group of divisor classes on some algebraic curves},
        date={1961},
     journal={Soviet Math. Dokl.},
      volume={2},
       pages={67\ndash 69},
        note={Translated from Dokl. Akad. Nauk SSSR \textbf{136} (1961),
  296--298},
}

\bib{Furio--Lombardo26}{article}{
      author={Furio, Lorenzo},
      author={Lombardo, Davide},
       title={On {$7$}-adic {G}alois representations for elliptic curves over
  {$\mathbf{Q}$}},
        date={2026},
        ISSN={0024-6115,1460-244X},
     journal={Proc. Lond. Math. Soc. (3)},
      volume={133},
      number={2},
       pages={Paper No. e70193, 41},
         url={https://doi.org/10.1112/plms.70193},
      review={\MR{5115964}},
}

\bib{FLS-18}{article}{
      author={Fit\'e, Francesc},
      author={Lorenzo~Garc\'ia, Elisa},
      author={Sutherland, Andrew~V.},
       title={Sato-{T}ate distributions of twists of the {F}ermat and the
  {K}lein quartics},
        date={2018},
     journal={Res. Math. Sci.},
      volume={5},
      number={4},
       pages={Paper No. 41, 40},
}

\bib{Furio-24}{misc}{
      author={Furio, Lorenzo},
       title={Effective bounds for adelic {G}alois representations attached to
  elliptic curves over the rationals},
        date={2024},
        note={Preprint,
  \href{https://arxiv.org/abs/2412.10340v4}{\texttt{arXiv:2412.10340v4}}, June
  5, 2026},
}

\bib{Genocchi-64}{article}{
      author={Genocchi, Angelo},
       title={Intorno all'equazione {$x^7+y^7+z^7=0$}},
     journal={Annali di Matematica Pura ed Applicata (1850--1865)},
      volume={6},
      number={1},
       pages={287--288},
        date={1864},
         doi={10.1007/BF03198884},
}

\bib{GR-78}{article}{
      author={Gross, Benedict~H.},
      author={Rohrlich, David~E.},
       title={Some results on the {M}ordell-{W}eil group of the {J}acobian of
  the {F}ermat curve},
        date={1978},
     journal={Invent. Math.},
      volume={44},
      number={3},
       pages={201\ndash 224},
}

\bib{HK-03}{article}{
      author={Halberstadt, Emmanuel},
      author={Kraus, Alain},
       title={Sur la courbe modulaire {$X_E(7)$}},
        date={2003},
     journal={Experiment. Math.},
      volume={12},
      number={1},
       pages={27\ndash 40},
}

\bib{Hurwitz-08}{article}{
      author={Hurwitz, A.},
       title={\"{U}ber die diophantische {G}leichung {$x^3y+y^3z+z^3x=0$}},
        date={1908},
     journal={Math. Ann.},
      volume={65},
      number={3},
       pages={428\ndash 430},
}

\bib{Karabiyik-26}{misc}{
      author={Karabiyik, Eray},
       title={Rational points on a twist of the {K}lein quartic and the
  classification of {$7$}-adic images of {G}alois representations for elliptic
  curves over {$\mathbb{Q}$}},
        date={2026},
        note={Private communication, September 18, 2026; abstract available at
  \url{https://eekarabiyik.github.io/files/7adic_abstract.pdf}, accessed
  October 2, 2026},
}

\bib{Kenku-85}{article}{
      author={Kenku, M.~A.},
       title={A note on the integral points of a modular curve of level {$7$}},
        date={1985},
     journal={Mathematika},
      volume={32},
      number={1},
       pages={45\ndash 48},
}

\bib{Klein-1879}{article}{
      author={Klein, Felix},
       title={Ueber die {T}ransformation siebenter {O}rdnung der elliptischen
  {F}unctionen},
        date={1878},
     journal={Math. Ann.},
      volume={14},
      number={3},
       pages={428\ndash 471},
        note={English translation in \emph{The eightfold way}, Math. Sci. Res.
  Inst. Publ. 35, Cambridge Univ. Press, 1999, pp. 287--331},
}

\bib{KM-85}{book}{
      author={Katz, Nicholas~M.},
      author={Mazur, Barry},
       title={Arithmetic moduli of elliptic curves},
      series={Annals of Mathematics Studies},
   publisher={Princeton University Press, Princeton, NJ},
        date={1985},
      volume={108},
}

\bib{Kummer-1850}{article}{
      author={Kummer, E.~E.},
       title={Allgemeiner {B}eweis des {F}ermat'schen {S}atzes, dass die
  {G}leichung {$x^\lambda + y^\lambda = z^\lambda$} durch ganze {Z}ahlen
  unl\"osbar ist, f\"ur alle diejenigen {P}otenz-{E}xponenten {$\lambda$},
  welche ungerade {P}rimzahlen sind und in den {Z}\"ahlern der ersten
  {$\frac{1}{2}(\lambda - 3)$} {B}ernoulli'schen {Z}ahlen als {F}actoren nicht
  vorkommen},
        date={1850},
     journal={J. Reine Angew. Math.},
      volume={40},
       pages={130\ndash 138},
}

\bib{Lame-1840}{article}{
      author={Lam\'e, G.},
       title={M\'emoire d'analyse ind\'etermin\'ee, d\'emontrant que
  l'\'equation {$x^7 + y^7 = z^7$} est impossible en nombres entiers},
        date={1840},
     journal={J. Math. Pures Appl.},
      volume={5},
       pages={195\ndash 210},
}

\bib{LGV-24}{article}{
      author={Lorenzo~Garc\'ia, Elisa},
      author={Vullers, Michael},
       title={Counterexamples to the {H}asse principle among the twists of the
  {K}lein quartic},
        date={2024},
     journal={Indag. Math. (N.S.)},
      volume={35},
      number={4},
       pages={638\ndash 645},
}

\bib{LMFDB}{misc}{
      author={{{LMFDB}{}{}~Collaboration}, The},
       title={The {L}-functions and modular forms database},
        date={2026},
        note={\url{https://www.lmfdb.org}},
}

\bib{Mazur-77}{incollection}{
      author={Mazur, B.},
       title={Rational points on modular curves},
        date={1977},
   booktitle={Modular functions of one variable, {V} ({P}roc. {S}econd
  {I}nternat. {C}onf., {U}niv. {B}onn, {B}onn, 1976)},
      series={Lecture Notes in Math.},
      volume={601},
   publisher={Springer, Berlin-New York},
       pages={107\ndash 148},
}

\bib{McCallum-88}{article}{
      author={McCallum, William~G.},
       title={On the {S}hafarevich-{T}ate group of the {J}acobian of a quotient
  of the {F}ermat curve},
        date={1988},
     journal={Invent. Math.},
      volume={93},
      number={3},
       pages={637\ndash 666},
}

\bib{MT-03}{incollection}{
      author={McCallum, William~G.},
      author={Tzermias, Pavlos},
       title={On {S}hafarevich-{T}ate groups and the arithmetic of {F}ermat
  curves},
        date={2003},
   booktitle={Number theory and algebraic geometry},
      series={London Math. Soc. Lecture Note Ser.},
      volume={303},
   publisher={Cambridge Univ. Press, Cambridge},
       pages={203\ndash 226},
}

\bib{Neukirch-99}{book}{
      author={Neukirch, J\"{u}rgen},
       title={Algebraic number theory},
      series={Grundlehren der mathematischen Wissenschaften},
   publisher={Springer-Verlag, Berlin},
        date={1999},
      volume={322},
}

\bib{PS-97}{article}{
      author={Poonen, Bjorn},
      author={Schaefer, Edward~F.},
       title={Explicit descent for {J}acobians of cyclic covers of the
  projective line},
        date={1997},
     journal={J. Reine Angew. Math.},
      volume={488},
       pages={141\ndash 188},
}

\bib{PSS-07}{article}{
      author={Poonen, Bjorn},
      author={Schaefer, Edward~F.},
      author={Stoll, Michael},
       title={Twists of {$X(7)$} and primitive solutions to {$x^2+y^3=z^7$}},
        date={2007},
     journal={Duke Math. J.},
      volume={137},
      number={1},
       pages={103\ndash 158},
}

\bib{RSZB-22}{article}{
      author={Rouse, Jeremy},
      author={Sutherland, Andrew~V.},
      author={Zureick-Brown, David},
       title={{$\ell$}-adic images of {G}alois for elliptic curves over
  {$\mathbb{Q}$} (and an appendix with {J}ohn {V}oight)},
        date={2022},
        ISSN={2050-5094},
     journal={Forum Math. Sigma},
      volume={10},
       pages={Paper No. e62, 63},
         url={https://doi.org/10.1017/fms.2022.38},
        note={With an appendix with John Voight},
      review={\MR{4468989}},
}

\bib{Serre-72}{article}{
      author={Serre, Jean-Pierre},
       title={Propri\'et\'es galoisiennes des points d'ordre fini des courbes
  elliptiques},
        date={1972},
     journal={Invent. Math.},
      volume={15},
      number={4},
       pages={259\ndash 331},
}

\bib{Silverman-94}{book}{
      author={Silverman, Joseph~H.},
       title={Advanced topics in the arithmetic of elliptic curves},
      series={Graduate Texts in Mathematics},
   publisher={Springer-Verlag, New York},
        date={1994},
      volume={151},
}

\bib{SZ-17}{article}{
      author={Sutherland, Andrew~V.},
      author={Zywina, David},
       title={Modular curves of prime-power level with infinitely many rational
  points},
        date={2017},
     journal={Algebra Number Theory},
      volume={11},
      number={5},
       pages={1199\ndash 1229},
}

\bib{Xuan2026}{misc}{
      author={Xuan, Tho~Nguyen},
       title={Complete classification of {$7$}-adic {G}alois images for
  non-{CM} elliptic curves over {$\mathbb{Q}$}},
        date={2026},
        note={Preprint,
  \href{https://arxiv.org/abs/2609.35460v1}{\texttt{arXiv:2609.35460v1}},
  September 28, 2026},
}

\bib{Zywina-15}{misc}{
      author={Zywina, David},
       title={On the possible images of the mod {$\ell$} representations
  associated to elliptic curves over {$\mathbb{Q}$}},
        date={2015},
        note={Preprint,
  \href{https://arxiv.org/abs/1508.07660v1}{\texttt{arXiv:1508.07660v1}},
  August 31, 2015},
}

\bib{Zywina-22}{misc}{
      author={Zywina, David},
       title={Explicit open images for elliptic curves over {$\mathbb{Q}$}},
        date={2022},
        note={Preprint,
  \href{https://arxiv.org/abs/2206.14959v2}{\texttt{arXiv:2206.14959v2}}, March
  22, 2024},
}

\end{biblist}
\end{bibdiv}
\end{document}